\documentclass[a4paper, 10pt]{article}
\usepackage[left=3cm,right=3cm,top=3cm,bottom=3cm]{geometry}

\usepackage[labelformat=simple, labelfont=sf]{caption}
\usepackage{subcaption}

\usepackage{packageList}
\usepackage{macros}

\graphicspath{{./Figures/}}
\pgfplotsset{compat=1.11}
\newlength\fwidth

\title{Greedy multiscale kernel algorithms}

\author[1]{Sara Avesani \thanks{\href{mailto:sara.avesani@usi.ch}{sara.avesani@usi.ch}}}
\author[2]{Armin Iske \thanks{\href{mailto:armin.iske@uni-hamburg.de}{armin.iske@uni-hamburg.de}}}
\author[1]{Michael Multerer \thanks{\href{mailto:michael.multerer@usi.ch}{michael.multerer@usi.ch}}}
\author[3,4]{Tizian Wenzel \thanks{\href{mailto:wenzel@math.lmu.de}{wenzel@math.lmu.de}}}

\affil[1]{USI Lugano, Lugano, Switzerland}
\affil[2]{University of Hamburg, Hamburg, Germany}
\affil[3]{LMU Munich, Munich, Germany}
\affil[4]{Munich Center for Machine Learning, Munich, Germany}

\DeclareMathOperator{\spn}{span}
\DeclareMathOperator{\conv}{absco}

\DeclareMathOperator{\diam}{diam}

\newcommand{\bs}{\boldsymbol}
\newcommand{\Kmat}[2]{K_{#1,#2}}

\definecolor{darkgreen}{rgb}{0, 0.6, 0}

\begin{document}

\maketitle

\begin{abstract}
Multiscale kernel approximation and greedy kernel algorithms are two established tools for scattered data approximation.
We combine them into \emph{greedy multiscale kernel algorithms}, in which every level of a multiscale residual correction is constructed by a greedy kernel algorithm.
For the multiscale $f$-greedy algorithm we prove convergence rates in the number of selected centres which coincide (up to an arbitrarily small loss in the exponent) 
with the rates of the single-scale $f$-greedy algorithm.
The gain of the multiscale construction is in the stability, and we make this precise by a two-sided estimate for the condition number of kernel matrices with rescaled kernels.
The lower bound shows that any single-scale method, and in particular the $f$-greedy algorithm, pays for its convergence rate with condition numbers growing at least like a power of the number of centres.
In contrast, the multiscale scaling of the kernel keeps this lower bound fixed on every level without changing the rate.
The upper bound identifies the separation of the centres at the scale of the kernel as the quantity to control.
We give the operation counts of all the methods considered and compare the greedy multiscale algorithms numerically with single-scale greedy and classical multiscale methods, in terms of error, time and conditioning.
\end{abstract}

\section{Introduction}

Kernel-based approximation methods are well-established tools in scattered data approximation, numerical analysis, and machine learning \cite{wendland2005scattered,fasshauer2015kernel}.
They are built around a positive definite kernel $k$, which induces a reproducing kernel Hilbert space (RKHS) and interpolation schemes with a rich theoretical foundation.
The definitions and assumptions used throughout this work are collected in \Cref{sec:background}.

Among the many developments in kernel approximation, two particularly successful directions are greedy kernel methods \cite{wirtz2013vectorial,haasdonk2018greedy,wenzel2023analysis,santin2024optimality} and multiscale kernel methods \cite{floater1996multistep,gia2010multiscale,wendland2010multiscale}, the latter also known as multilevel, multistep, or multiresolution methods.

Greedy kernel methods originate from the thinning algorithms introduced in \cite{floater1998thinning}.
Their objective is to construct accurate approximations from a carefully selected subset of the data sites.
Over time, the focus shifted from point removal to adaptive point insertion, leading to a variety of greedy selection rules \cite{marchi2005optimal,muller2009komplexitat,wirtz2013vectorial}, accompanied by a theoretical framework covering stability \cite{wenzel2021novel} and convergence \cite{santin2017convergence,wenzel2023analysis,santin2024optimality}.
Greedy kernel methods are used in numerous applications, from surrogate modeling to uncertainty quantification and scientific computing \cite{haasdonk2018greedy,doeppel2024goal}.

Multiscale kernel methods trace their origins to the multistep interpolation schemes of \cite{floater1996multistep}.
Their central idea is to approximate a target function across several resolution levels by iteratively correcting the residual of the previous levels: starting from a coarse set of centres, progressively finer point sets and progressively narrower kernels are used.
The framework was originally motivated by compactly supported kernels \cite{wendland1995piecewise}, for which a scaling of the support proportional to the fill distance yields sparse and well-conditioned system matrices.
Convergence was established in \cite{gia2010multiscale,wendland2010multiscale} with later refinements in \cite{townsend2013multiscale,wendland2017multiscale}, and the development of adaptive multiscale schemes is an active topic, see \cite[Algorithm~3]{wendland2017multiscale} and the recent preprints \cite{gollwitzer2026nodal,lot2026adaptive,durst2026improved}.

Although greedy and multiscale kernel methods pursue different strategies, they share a common objective, namely the construction of efficient kernel approximations from large data sets.
Greedy methods achieve this through adaptive centre selection, multiscale methods through a hierarchy of resolutions.
In this paper we combine both ideas and introduce a class of greedy multiscale kernel algorithms in which every level of the multiscale hierarchy is built by a greedy algorithm.
Since the $P$-greedy algorithm produces asymptotically quasi-uniformly distributed centres \cite{wenzel2021novel},
the classical multiscale construction on quasi-uniform sets can be seen as a $P$-greedy multiscale algorithm\footnote{To be precise, \cite{wenzel2021novel} establishes quasi-uniformity for a fixed shape parameter.
It remains to check whether this result extends uniformly to a range of shape parameters.},
while target-dependent rules such as $f$-greedy yield genuinely adaptive multiscale approximations.

The message of this paper is that the multiscale structure improves the stability of the standard $f$-greedy algorithm without losing approximation order.
The contributions are the following.
\begin{enumerate}[label=(\roman*)]
\item For the multiscale $f$-greedy algorithm we prove convergence rates in the number of selected centres, see \Cref{th:main_theorem}.
The rate coincides, up to an arbitrarily small loss in the exponent, with the rate of the single-scale $f$-greedy algorithm of \cite{santin2024optimality}.
\item We prove a two-sided estimate for the condition number of kernel matrices with rescaled kernels, see \Cref{prop:stability} and \Cref{th:tradeoff}.
\item We combine the multiscale contraction lemma behind all convergence proofs, \Cref{lem:contraction}, with the stability estimate into one theorem, \Cref{th:stability_convergence}, valid for any construction of the levels.
It separates the two roles: the reduction of the residual on a level decides the convergence, the separation of its centres at the scale of the kernel decides the stability.
The classical multiscale method and the multiscale $f$-greedy algorithm are recovered from it as special cases, see \Cref{rem:stability_convergence_cases}.
\item We give the operation counts of the greedy multiscale algorithms and of the methods they are compared with, see \Cref{sec:cost}, and compare them numerically, in error, time and conditioning, with the single-scale $f$- and $P$-greedy algorithms and with the classical multiscale method with compactly supported kernels.
\end{enumerate}

The remainder of the paper is organized as follows.
\Cref{sec:background} collects the necessary background.
\Cref{sec:multiscale_greedy} introduces the greedy multiscale algorithms and their cost.
\Cref{sec:analysis} contains the analysis: the rates for multiscale $f$-greedy, the stability estimate, and the general theorem that ties the two together.
\Cref{sec:numerical_results} presents the numerical experiments and \Cref{sec:conclusion} concludes.

\section{Background}
\label{sec:background}
\subsection{Kernels, native spaces and Sobolev spaces}
\label{subsec:basics}

Throughout, $\Omega \subset \R^d$ is a bounded domain with Lipschitz boundary, and $k(x,z) = \Phi(x-z)$ is a translation invariant kernel with an even function $\Phi : \R^d \to \R$.
For $\delta \in (0,1]$ we consider the rescaled kernels
\begin{align}
\label{eq:rescaled_kernel}
\Phi_\delta(x)  \isdef \delta^{-d} \Phi \left( \frac{x}{\delta} \right), \qquad k_\delta(x,z)  \isdef \Phi_\delta(x-z),
\end{align}
whose Fourier transforms satisfy $\widehat{\Phi_\delta}(\omega) = \widehat{\Phi}(\delta \omega)$.
We refer to \cite{wenzel2026optimal} for a theoretical analysis of the asymptotically optimal choice of the scale parameter.
We make the following assumptions on $\Phi$.
\begin{enumerate}[label=(A\arabic*)]
\item \label{ass:integrable} $\Phi \in L^1(\R^d) \cap C(\R^d)$.
\item \label{ass:fourier} There are constants $0 < c_\Phi \leq C_\Phi$ and $\tau > d/2$ such that
\begin{align}
\label{eq:fourier_decay}
c_\Phi (1+\Vert \omega \Vert_2^2)^{-\tau}
\leq
\widehat{\Phi}(\omega)
\leq
C_\Phi (1+\Vert \omega \Vert_2^2)^{-\tau}
\qquad \text{for all } \omega \in \R^d .
\end{align}
\end{enumerate}
They make $k$ strictly positive definite and identify its native space with a Sobolev space, see below for details. In particular, they hold for the Mat\'ern kernels and for the compactly supported Wendland kernels \cite{wendland1995piecewise}.
For the rescaled kernels, \eqref{eq:fourier_decay} reads
\begin{align}
\label{eq:fourier_decay_scaled}
c_\Phi (1+\delta^2 \Vert \omega \Vert_2^2)^{-\tau}
\leq
\hat{\Phi}_\delta(\omega)
\leq C_\Phi (1+\delta^2 \Vert \omega \Vert_2^2)^{-\tau}
\qquad \text{for all } \omega \in \R^d,
\end{align}
and we recall the elementary estimate, valid for $\delta \in (0,1]$,
\begin{align}
\label{eq:equivalence_fourier_weights}
(1+\Vert \omega \Vert_2^2)^{-\tau}
\leq
(1+\delta^2 \Vert \omega \Vert_2^2)^{-\tau}
\leq
\delta^{-2\tau}
(1+\Vert \omega \Vert_2^2)^{-\tau}.
\end{align}
The kernel $k_\delta$ induces on $\R^d$ the RKHS $\calh_{k_\delta}(\R^d)$ with inner product
\begin{align*}
\langle f,g \rangle_{\Phi_\delta}
 \isdef (2\pi)^{-d/2}
\int_{\R^d}
\frac{\widehat{f}(\omega)\overline{\widehat{g}(\omega)}}
{\widehat{\Phi_\delta}(\omega)}
\,\mathrm{d}\omega ,
\end{align*}
and we abbreviate the norm of $\calh_{k_\delta}(\R^d)$ by $\Vert \cdot \Vert_{\Phi_\delta}$, to distinguish it from the norm of the native space $\calh_{k_\delta}(\Omega)$ on the domain.
The Sobolev space $H^\tau(\R^d)$ is normed by $\Vert g \Vert_{H^\tau(\R^d)}^2 = {(2\pi)^{-d/2}} \int_{\R^d} |\widehat g(\omega)|^2 (1+\Vert \omega \Vert_2^2)^\tau \,\mathrm{d}\omega$.
Combining \eqref{eq:fourier_decay_scaled} and \eqref{eq:equivalence_fourier_weights} yields the following norm equivalence, see \cite[Lemma~1]{wendland2010multiscale}.

\begin{lemma}
\label{lem:rkhs_norms_different_scalings}
For every $\delta \in (0,1]$ we have $\calh_{k_\delta}(\R^d) = H^\tau(\R^d)$ and every $g \in H^\tau(\R^d)$ satisfies
\begin{align}
\label{eq:rkhs_norms_different_scalings}
c_\Phi^{1/2} \Vert g \Vert_{\Phi_\delta}
\leq
\Vert g \Vert_{H^\tau(\R^d)}
\leq
C_\Phi^{1/2} \delta^{-\tau} \Vert g \Vert_{\Phi_\delta}.
\end{align}
\end{lemma}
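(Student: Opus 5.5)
The plan is to compare the two norms pointwise in frequency. Both are weighted $L^2$ norms of $\widehat g$ with the same prefactor $(2\pi)^{-d/2}$, so it suffices to bound the weights against each other. By definition,
\begin{align*}
\Vert g \Vert_{\Phi_\delta}^2 = (2\pi)^{-d/2}\int_{\R^d} \frac{|\widehat g(\omega)|^2}{\widehat{\Phi_\delta}(\omega)}\,\mathrm{d}\omega,
\qquad
\Vert g \Vert_{H^\tau(\R^d)}^2 = (2\pi)^{-d/2}\int_{\R^d} |\widehat g(\omega)|^2 (1+\Vert\omega\Vert_2^2)^{\tau}\,\mathrm{d}\omega .
\end{align*}
So we need two-sided bounds for $1/\widehat{\Phi_\delta}(\omega)$ in terms of $(1+\Vert\omega\Vert_2^2)^\tau$, with constants depending only on $c_\Phi$, $C_\Phi$ and $\delta$.

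For the left inequality, take reciprocals in the lower bound of \eqref{eq:fourier_decay_scaled}. This gives $1/\widehat{\Phi_\delta}(\omega) \le c_\Phi^{-1}(1+\delta^2\Vert\omega\Vert_2^2)^{\tau}$. The first inequality of \eqref{eq:equivalence_fourier_weights} then gives $(1+\delta^2\Vert\omega\Vert_2^2)^{\tau} \le (1+\Vert\omega\Vert_2^2)^{\tau}$, which holds because $\delta\le1$. Multiplying by $|\widehat g(\omega)|^2$ and integrating yields $c_\Phi\Vert g\Vert_{\Phi_\delta}^2 \le \Vert g\Vert_{H^\tau(\R^d)}^2$.

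For the right inequality, take reciprocals in the upper bound of \eqref{eq:fourier_decay_scaled}. This gives $1/\widehat{\Phi_\delta}(\omega) \ge C_\Phi^{-1}(1+\delta^2\Vert\omega\Vert_2^2)^{\tau}$. The second inequality of \eqref{eq:equivalence_fourier_weights} gives $(1+\delta^2\Vert\omega\Vert_2^2)^{\tau} \ge \delta^{2\tau}(1+\Vert\omega\Vert_2^2)^{\tau}$; it follows from $\delta^2(1+\Vert\omega\Vert_2^2)\le 1+\delta^2\Vert\omega\Vert_2^2$. Integrating gives $\Vert g\Vert_{H^\tau(\R^d)}^2 \le C_\Phi\delta^{-2\tau}\Vert g\Vert_{\Phi_\delta}^2$. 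Taking square roots in both estimates gives \eqref{eq:rkhs_norms_different_scalings}.

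The set identity $\calh_{k_\delta}(\R^d)=H^\tau(\R^d)$ follows from the same estimates. Both spaces are the sets of $g\in L^2(\R^d)$ for which the respective weighted integral is finite. Under \ref{ass:integrable} one has $\widehat{\Phi_\delta}$ continuous and positive, and the native space of a translation-invariant kernel admits this Fourier characterization by the standard result in \cite{wendland2005scattered}. The two-sided weight bounds show that the finiteness conditions coincide.

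The only point needing care is this identification of the native space with the Fourier-weighted space, which I will take from the literature. The norm inequalities themselves are routine pointwise comparisons of the weights.
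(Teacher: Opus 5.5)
Your proof is correct and follows the paper's argument: the paper obtains the lemma by combining \eqref{eq:fourier_decay_scaled} with \eqref{eq:equivalence_fourier_weights} (citing \cite[Lemma~1]{wendland2010multiscale}), which is your pointwise comparison of $1/\widehat{\Phi_\delta}(\omega)$ with $(1+\Vert\omega\Vert_2^2)^\tau$. For the set identity, add one remark: the Fourier characterization of the native space in \cite{wendland2005scattered} also requires continuity, and this holds automatically here because $\tau > d/2$ forces $\widehat g \in L^1(\R^d)$.
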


On the bounded domain $\Omega$ the native space $\calh_{k_\delta}(\Omega)$ is the space of restrictions of functions of $\calh_{k_\delta}(\R^d)$, and its norm is the quotient norm.
We collect the facts we need, see \cite[Chapters 10 and 13]{wendland2005scattered}.
For a finite set $Z \subset \Omega$ we denote by $I_{\delta,Z} g$ the kernel interpolant of the function $g$ in the points $Z$ with respect to the kernel $k_\delta$,
i.e.\ the unique element of $V_{\delta,Z}  \isdef \spn \{ k_\delta( \cdot ,z) : z \in Z\}$ with $(I_{\delta,Z} g)(z) = g(z)$ for $z \in Z$.
The kernel matrix is denoted by $\Kmat{\delta}{Z}  \isdef (k_\delta(z_i,z_j))_{i,j}$.

\begin{lemma}
\label{lem:native_space_omega}
Let $\delta \in (0,1]$ and let $Z \subset \Omega$ be finite.
\begin{enumerate}[label=(\roman*)]
\item $\calh_{k_\delta}(\Omega) = \{ G|_\Omega : G \in \calh_{k_\delta}(\R^d) \}$ and $\Vert g \Vert_{\calh_{k_\delta}(\Omega)} = \min \{ \Vert G \Vert_{\Phi_\delta} : G|_\Omega = g \}$.
In particular $\Vert G|_\Omega \Vert_{\calh_{k_\delta}(\Omega)} \leq \Vert G \Vert_{\Phi_\delta}$ for every $G \in \calh_{k_\delta}(\R^d)$.
\item For $g \in \calh_{k_\delta}(\Omega)$, the interpolant $I_{\delta,Z} g$ is the orthogonal projection of $g$ onto $V_{\delta,Z}$ in $\calh_{k_\delta}(\Omega)$.
Consequently $\Vert I_{\delta,Z} g \Vert_{\calh_{k_\delta}(\Omega)} \leq \Vert g \Vert_{\calh_{k_\delta}(\Omega)}$ and $\Vert g - I_{\delta,Z} g \Vert_{\calh_{k_\delta}(\Omega)} \leq \Vert g \Vert_{\calh_{k_\delta}(\Omega)}$, and the same statements hold in $\calh_{k_\delta}(\R^d)$.
Moreover $(I_{\delta,Z} G)|_\Omega = I_{\delta,Z} (G|_\Omega)$ for $G \in \calh_{k_\delta}(\R^d)$, since $Z \subset \Omega$.
\item The norm of the interpolant is computable from the linear algebra of the interpolation problem,
$\Vert I_{\delta,Z} g \Vert_{\calh_{k_\delta}(\Omega)}^2 = g|_Z^\top \Kmat{\delta}{Z}^{-1} g|_Z$.
\end{enumerate}
\end{lemma}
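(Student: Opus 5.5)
The three parts of \Cref{lem:native_space_omega} are standard facts for native spaces of strictly positive definite kernels (see \cite[Chapters 10 and 13]{wendland2005scattered}). My plan is to reduce everything to the abstract RKHS structure of $\calh_{k_\delta}(\R^d)$ and $\calh_{k_\delta}(\Omega)$.

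By \ref{ass:integrable}, \ref{ass:fourier} and \Cref{lem:rkhs_norms_different_scalings}, $k_\delta$ is strictly positive definite and $\calh_{k_\delta}(\R^d)$ is its RKHS with the Fourier inner product. The reproducing identity $\langle G, k_\delta(\cdot,x)\rangle_{\Phi_\delta} = G(x)$ follows from $\widehat{k_\delta(\cdot,x)}(\omega) = \widehat{\Phi_\delta}(\omega) e^{-i x^\top \omega}$ and Fourier inversion, with the normalization matching the $(2\pi)^{-d/2}$ in the inner product.

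For (i), I would invoke the restriction theorem for RKHSs (Aronszajn), as in \cite[Theorem~10.46/10.47]{wendland2005scattered}. The RKHS of $k_\delta|_{\Omega\times\Omega}$ consists of restrictions $G|_\Omega$ with the quotient norm, and the minimum is attained. To see attainment, note that the set $\{G : G|_\Omega = g\}$ is a closed affine subspace: it is the fibre of the bounded restriction map, whose kernel is closed because point evaluations are continuous. The minimizer is therefore the unique element of that fibre orthogonal to the kernel, namely the element lying in $\overline{\spn}\{k_\delta(\cdot,x): x\in\Omega\}$. The inequality $\Vert G|_\Omega\Vert \le \Vert G\Vert_{\Phi_\delta}$ is then immediate.

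For (ii), set $s = I_{\delta,Z} g$. For every $z\in Z$ the reproducing property gives
\[
\langle g - s, k_\delta(\cdot,z)\rangle = g(z)-s(z) = 0,
\]
so $g-s \perp V_{\delta,Z}$. Hence $s$ is the orthogonal projection of $g$ onto $V_{\delta,Z}$, and both norm bounds follow from Pythagoras. The same computation works verbatim in $\calh_{k_\delta}(\R^d)$. For the commutation with restriction, observe that both $(I_{\delta,Z}G)|_\Omega$ and $I_{\delta,Z}(G|_\Omega)$ are combinations of $k_\delta(\cdot,z)|_\Omega$ with $z\in Z\subset\Omega$, and both interpolate $G$ on $Z$. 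Strict positive definiteness then gives uniqueness of the coefficients, so they coincide.

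For (iii), write $s=\sum_j \alpha_j k_\delta(\cdot,z_j)$ with $\Kmat{\delta}{Z}\alpha = g|_Z$. Then
\[
\Vert s\Vert^2 = \alpha^\top \Kmat{\delta}{Z}\alpha = g|_Z^\top \Kmat{\delta}{Z}^{-1} g|_Z ,
\]
using reproducibility and the symmetry of $\Kmat{\delta}{Z}$.

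The only step needing real care is (i): identifying the native space on $\Omega$ with the restriction space and showing that the quotient infimum is a minimum. Everything else is routine Hilbert-space projection. I would cite the restriction theorem there rather than reprove it.
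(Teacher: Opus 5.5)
Your proposal is correct and takes the same route as the paper. The paper gives no separate argument and collects these facts from \cite[Chapters 10 and 13]{wendland2005scattered}: the restriction/extension theorem for (i), orthogonality of $g - I_{\delta,Z} g$ to $V_{\delta,Z}$ via the reproducing property for (ii), and the quadratic-form identity $\alpha^\top \Kmat{\delta}{Z} \alpha$ for (iii). Your extra justification that the quotient infimum is attained (a closed affine fibre whose minimal-norm element lies in $\overline{\spn}\{k_\delta(\cdot,x) : x \in \Omega\}$) correctly fills in what the citation covers.
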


Since $\Omega$ has a Lipschitz boundary, Sobolev spaces on $\Omega$ and on $\R^d$ are related by a universal extension operator, see \cite[Proposition~1]{wendland2010multiscale}.

\begin{prop}
\label{prop:universal_ext_op}
There is a linear operator $E : H^\sigma (\Omega) \to H^\sigma (\R^d)$, defined for all $\sigma \geq 0$ simultaneously, such that for every $\sigma \geq 0$ and every $f \in H^\sigma(\Omega)$
\begin{enumerate}[label=(\roman*)]
    \item $Ef|_{\Omega} = f$,
    \item $\|Ef\|_{H^\sigma (\R^d)} \leq C_{\sigma} \|f\|_{H^\sigma (\Omega)}$.
\end{enumerate}
\end{prop}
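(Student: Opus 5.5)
This is Stein's universal extension operator for Lipschitz domains. The plan is to build it once from a single kernel construction, without using $\sigma$, and only afterwards check the bounds for each $\sigma$, first at integers and then by interpolation.

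\emph{Local model.} First I reduce to a special Lipschitz domain $\{x_d > \varphi(x')\}$ with $\varphi$ Lipschitz. I take a finite cover of $\overline\Omega$ by open sets $U_1,\dots,U_N$ together with one interior set $U_0 \Subset \Omega$, chosen so that after a rotation each $\Omega\cap U_j$ coincides with a special Lipschitz domain. A smooth partition of unity $\{\chi_j\}$ subordinate to this cover then gives
\[
Ef = \chi_0 f + \sum_j \psi_j\, E_j(\chi_j f),
\]
where $E_j$ is the special-domain extension and $\psi_j$ is a cutoff equal to one on $\operatorname{supp}\chi_j$. Multiplication by smooth cutoffs is bounded on every $H^\sigma$, so it suffices to handle the special domain, with constants depending only on $\sigma$ and the Lipschitz character of $\Omega$.

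\emph{Construction on the special domain.} I use Stein's regularized distance $\delta^*(x)$, which is comparable to $x_d-\varphi(x')$ on the complement and has derivatives $|\partial^\alpha \delta^*| \lesssim (\delta^*)^{1-|\alpha|}$. I also fix a continuous weight $\psi$ on $[1,\infty)$ with three properties: $\int_1^\infty \psi = 1$, $\int_1^\infty \lambda^k \psi(\lambda)\,d\lambda = 0$ for all $k\ge 1$, and rapid decay $\psi(\lambda) = O(\lambda^{-N})$ for every $N$. Stein's explicit choice $\psi(\lambda) = \frac{e}{\pi\lambda}\,\mathrm{Im}\bigl(e^{-\omega(\lambda-1)^{1/4}}\bigr)$ with $\omega = e^{-i\pi/4}$ works. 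Below $\Omega$ I define
\[
Ef(x',x_d) = \int_1^\infty f\bigl(x', x_d + \lambda\, c\,\delta^*(x)\bigr)\,\psi(\lambda)\,d\lambda ,
\]
with a fixed constant $c$ large enough that the reflected points land in $\Omega$. Nothing here depends on $\sigma$, and this is exactly why one operator serves for all $\sigma$.

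\emph{Bounds.} For integer $\sigma = m$ I differentiate under the integral. Every term in which $\delta^*$ is differentiated carries a factor $(\delta^*)^{1-|\alpha|}$. Taylor expansion, combined with the vanishing moments of $\psi$, cancels these singular factors, and what remains are integrals of $m$-th derivatives of $f$ against rapidly decaying kernels. A Hardy-type inequality then gives $\|Ef\|_{H^m} \le C_m\|f\|_{H^m(\Omega)}$. Because $E$ is one fixed linear operator bounded at every integer order, interpolation between $H^m$ and $H^{m+1}$ gives the bound for every real $\sigma \ge 0$. This step needs two facts about the interpolation scales: $H^\sigma(\R^d)$ interpolates in the usual way, and $H^\sigma(\Omega)$ does as well, which holds precisely because a universal extension exists (a retract argument). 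Finally, property (i) is immediate, since $E$ is the identity on $\Omega$.

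\emph{Main obstacle.} The hard step is the integer-order estimate near the boundary, where the singular derivatives of $\delta^*$ have to be absorbed through the moment conditions and the Hardy inequality. Since the paper only cites this result, in practice I would invoke Stein's theorem (Stein, \emph{Singular Integrals}, Ch.~VI) directly, as \cite[Proposition~1]{wendland2010multiscale} does, and include only the reduction sketched above.
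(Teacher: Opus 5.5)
Your proposal is correct and matches the paper, which gives no proof of its own but cites \cite[Proposition~1]{wendland2010multiscale}, i.e.\ Stein's universal extension operator, exactly as you end up doing; your sketch (localization to special Lipschitz domains, Stein's regularized distance and moment weight, integer-order bounds, then interpolation) is the standard argument behind that citation. Two small points: your retract argument identifies fractional-order $H^\sigma(\Omega)$ with the restriction (equivalently, interpolation) space, so if one uses the intrinsic Slobodeckij norm instead, its equivalence with that space on Lipschitz domains is a further known fact you would also cite; and there is a harmless sign slip, since below the graph $\delta^*$ is comparable to $\varphi(x')-x_d$, not $x_d-\varphi(x')$.
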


By \Cref{lem:rkhs_norms_different_scalings}, \Cref{lem:native_space_omega}(i) and \Cref{prop:universal_ext_op}, every $g \in H^\tau(\Omega)$ belongs to $\calh_{k_\delta}(\Omega)$ for every $\delta \in (0,1]$, with
\begin{align}
\label{eq:Htau_to_native}
\Vert g \Vert_{\calh_{k_\delta}(\Omega)} \leq \Vert E g \Vert_{\Phi_\delta} \leq c_\Phi^{-1/2} \Vert Eg \Vert_{H^\tau(\R^d)} \leq c_\Phi^{-1/2} C_\tau \Vert g \Vert_{H^\tau(\Omega)} .
\end{align}

Finally, we need a sampling inequality.
The following result is taken from \cite{wendland2005approximate}, see also \cite{narcowich2006sobolev}.
For $X \subset \Omega$ we denote the fill distance and the separation distance by
\begin{align*}
h_{X,\Omega}  \isdef \sup_{x \in \Omega} \min_{x_i \in X} \Vert x - x_i \Vert_2 ,
\qquad
q_X  \isdef \tfrac12 \min_{x_i \neq x_j \in X} \Vert x_i - x_j \Vert_2 .
\end{align*}
\begin{theorem}
\label{th:sampling_ineq}
Let $\tau > d/2$.
There are constants $C > 0$ and $h_0 > 0$, depending only on $\Omega$ and $\tau$, such that for every finite $X \subset \Omega$ with $h_{X,\Omega} \leq h_0$ and every $u \in H^\tau(\Omega)$ it holds:
\begin{equation*}
\Vert u \Vert_{L^\infty(\Omega)} \leq C \left( h_{X,\Omega}^{\tau - d/2} \Vert u \Vert_{H^\tau(\Omega)} + \Vert u|_X \Vert_\infty \right),
\end{equation*}
and 
\begin{equation*}
\Vert u \Vert_{L^2(\Omega)} \leq C \left( h_{X,\Omega}^{\tau} \Vert u \Vert_{H^\tau(\Omega)} + \Vert u|_X \Vert_\infty \right).
\end{equation*}
\end{theorem}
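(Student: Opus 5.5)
This is the standard sampling inequality for functions with scattered zeros, extended to nonzero data, and I would prove it by local polynomial reproduction on a covering of $\Omega$ by cones. First, since $\Omega$ is bounded with Lipschitz boundary, it satisfies an interior cone condition with some radius $r$ and angle $\theta$. Fix $\ell = \lfloor \tau - d/2 \rfloor$, or more generally the largest integer with $\ell < \tau - d/2$, so that $H^\tau(\Omega) \hookrightarrow C^\ell$ via Sobolev embedding. Next, use the standard local polynomial reproduction result (Wendland, Chapter~3). There are constants $c_1$ and $h_0$, depending on $\ell$, $\theta$ and $d$, with the following property. For every $x \in \Omega$ and every $h_{X,\Omega} \leq h_0$, there are weights $a_j(x)$, supported on $x_j \in X \cap B(x, c_1 h_{X,\Omega})$, such that $\sum_j a_j(x) p(x_j) = p(x)$ for all $p \in \pi_\ell(\R^d)$ and $\sum_j |a_j(x)| \leq 2$.

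For the $L^\infty$ bound, fix $x \in \Omega$ and take a cone $C(x) \subset \Omega$ of radius $\rho \sim c_1 h_{X,\Omega}$. On $C(x)$ compare $u$ with an averaged Taylor polynomial $p$ of degree $\ell$, for instance the Sobolev/Bramble--Hilbert polynomial on the star-shaped cone region. Then
\begin{align*}
|u(x)| \leq |u(x)-p(x)| + \Big|\sum_j a_j(x)(p(x_j)-u(x_j))\Big| + \Big|\sum_j a_j(x) u(x_j)\Big| \leq 3\Vert u - p\Vert_{L^\infty(C(x))} + 2\Vert u|_X\Vert_\infty .
\end{align*}
The local Bramble--Hilbert estimate in the fractional Sobolev space on a region of diameter $\rho$ gives $\Vert u-p\Vert_{L^\infty(C(x))} \leq C \rho^{\tau-d/2} |u|_{H^\tau(C(x))}$, which is bounded by $C h_{X,\Omega}^{\tau-d/2}\Vert u\Vert_{H^\tau(\Omega)}$. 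Taking the supremum over $x$ yields the first inequality.

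For the $L^2$ bound, I would not take a pointwise supremum but instead cover $\Omega$ by a family of such cones or balls, of diameter about $h_{X,\Omega}$, with bounded overlap (a Whitney-type/Duchon covering, as in Narcowich--Ward--Wendland). On each piece $D$, the same argument gives
\begin{align*}
\Vert u\Vert_{L^2(D)} \leq |D|^{1/2}\Vert u\Vert_{L^\infty(D)} \leq C\big(h^{\tau}|u|_{H^\tau(D^*)} + h^{d/2}\Vert u|_X\Vert_\infty\big),
\end{align*}
where $D^*$ is a slightly enlarged piece, and the factor $h^{d/2}$ converts the sup-type local estimate into an $L^2$ one. Squaring and summing over the pieces, the bounded overlap makes $\sum |u|^2_{H^\tau(D^*)} \lesssim \Vert u\Vert^2_{H^\tau(\Omega)}$. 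The data term contributes $(\#\text{pieces})\, h^{d}\Vert u|_X\Vert_\infty^2 \lesssim |\Omega|\,\Vert u|_X\Vert_\infty^2$, and taking square roots gives the second inequality.

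I expect the main obstacle to be constructing the covering by uniformly shaped star-shaped subdomains near the Lipschitz boundary. The Bramble--Hilbert constants must be uniform, and the fractional-order $\tau$ requires handling the Slobodeckij seminorm, whose nonlocality complicates the summation over pieces. I would address both points by following Narcowich--Ward--Wendland, who handle the seminorm via overlapping doubled sets, and by invoking the cone condition to fix the shape constants.
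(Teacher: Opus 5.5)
Your route (local polynomial reproduction on cones with a Bramble--Hilbert estimate for $L^\infty$, and a bounded-overlap covering for $L^2$) is the standard one behind the results the paper cites. The paper itself gives no proof; it quotes \cite{wendland2005approximate,narcowich2006sobolev}. There is, however, a concrete error in the one parameter that matters: the polynomial degree.

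You take $\ell$ to be the largest integer below $\tau-d/2$, motivated by $H^\tau\hookrightarrow C^\ell$. The local estimate you then invoke is $\Vert u-p\Vert_{L^\infty(C(x))}\le C\rho^{\tau-d/2}|u|_{H^\tau(C(x))}$ with $p\in\pi_\ell$. It needs $p$ of degree $\lceil\tau\rceil-1$, the largest integer strictly below $\tau$, and it is false for your $\ell$ whenever $\ell+1<\tau$. For $d\ge 2$ this is always the case, since $[\tau-d/2,\tau)$ contains an integer. Take the paper's own setting $d=2$, $\tau=3/2$: your $\ell=0$, and $u(x)=x_1$ has vanishing $H^{3/2}$ seminorm on $C(x)$, while no constant approximates it better than a multiple of $\rho$ there.

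With your $\ell$ you could still rescue the $L^\infty$ bound (when $\tau-d/2\notin\N$) by Taylor expansion at the vertex of the cone and the global Hölder embedding $H^\tau(\Omega)\hookrightarrow C^{\ell,\tau-d/2-\ell}(\overline\Omega)$. The $L^2$ argument would then collapse, though. Each piece contributes $h^{d/2}\,h^{\tau-d/2}\Vert u\Vert_{H^\tau(\Omega)}$ with a \emph{global} norm, and summing over about $h^{-d}$ pieces gives only $h^{\tau-d/2}$ instead of $h^\tau$. The extra factor $h^{d/2}$ in the $L^2$ inequality comes precisely from a \emph{local} seminorm that can be summed, and that is only available with reproduction of degree $\lceil\tau\rceil-1$. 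The hypothesis $\tau>d/2$ is needed only to make point values and the local $L^\infty$ embedding meaningful, not to fix the degree.

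With $\ell=\lceil\tau\rceil-1$ your sketch goes through, provided you make two adjustments:
\begin{itemize}
\item Use cones of radius $\rho\asymp h$, with a constant large enough (growing like $\ell^2$) that $X\cap C(x)$ is a norming set for $\pi_\ell$; this is what fixes $h_0$.
\item Take the weights $a_j(x)$ supported in $X\cap C(x)$, not merely in $X\cap B(x,c_1h)$, so that $|p(x_j)-u(x_j)|\le\Vert u-p\Vert_{L^\infty(C(x))}$ is justified. Wendland's construction does give this.
\end{itemize}

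Finally, the Slobodeckij seminorm is not an obstacle to the summation. For $D^*\subset\Omega$, $|u|^2_{H^\tau(D^*)}$ is the double integral over $D^*\times D^*$, and $\sum_D\mathbf{1}_{D^*}(y)\mathbf{1}_{D^*}(z)\le\sum_D\mathbf{1}_{D^*}(y)\le M$. Bounded overlap therefore gives $\sum_D|u|^2_{H^\tau(D^*)}\le M\,|u|^2_{H^\tau(\Omega)}$ directly.
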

If, in addition, $u|_X = 0$, the second terms vanish, and $\Vert u \Vert_{L^p(\Omega)} \leq \Vert u \Vert_{L^2(\Omega)}^{2/p} \Vert u \Vert_{L^\infty(\Omega)}^{1-2/p}$ gives, for $2 \leq p \leq \infty$,
\begin{align}
\label{eq:sampling_Lp}
\Vert u \Vert_{L^p(\Omega)} \leq C h_{X,\Omega}^{\tau - d(1/2 - 1/p)} \Vert u \Vert_{H^\tau(\Omega)}.
\end{align}

\subsection{Multiscale kernel approximation}
\label{subsec:multiscale_kernel}

Multiscale kernel methods, introduced in \cite{floater1996multistep}, approximate a target function by a residual correction over a sequence of scales.
Given point sets $X_1, X_2, \ldots \subset \Omega$ with decreasing fill distances $h_j  \isdef h_{X_j,\Omega}$ and kernels $\Phi_j  \isdef \Phi_{\delta_j}$ with scales $\delta_j$ proportional to $h_j$, the approximation is built by \Cref{alg:standard_multiscale_algorithm}, where the local approximant on level $j$ is the kernel interpolant $s_j = I_{\delta_j, X_j} e_{j-1}$.

\begin{algorithm}
\caption{Multiscale approximation \cite{wendland2010multiscale}}
\label{alg:standard_multiscale_algorithm}
\textbf{Input}: target $f$, point sets $X_1, \dots, X_n$, scales $\delta_1, \dots, \delta_n$ \\
\textbf{Output}: approximation $f_n \in V_{\delta_1,X_1} + \dots + V_{\delta_n,X_n}$
\begin{algorithmic}[1]
\State Set $f_0 = 0$, $e_0 = f$
\For{$j = 1, 2, \dots, n$}
    \State $s_j = I_{\delta_j, X_j} e_{j-1}$
    \State $f_j = f_{j-1} + s_j$, \quad $e_j = e_{j-1} - s_j$
\EndFor
\end{algorithmic}
\end{algorithm}

The convergence of \Cref{alg:standard_multiscale_algorithm} was established in \cite{wendland2010multiscale}.

\begin{theorem}[{\cite[Theorem~1]{wendland2010multiscale}}]
\label{th:conv_result_wendland}
Let the fill distances satisfy $c\mu h_j \leq h_{j+1} \leq \mu h_j$ for $j = 1,2,\ldots$ with fixed $\mu \in (0,1)$, $c \in (0,1]$ and $h_1$ sufficiently small, let $\delta_j = \nu h_j$ with $1/h_1 \geq \nu \geq \gamma/\mu$ for a fixed $\gamma > 0$, and let $f \in H^\tau(\Omega)$.
Then there is a constant $C_1 = C_1(\gamma) > 0$ such that, with $\alpha = C_1 \mu^\tau$,
\begin{align*}
\|E e_j\|_{\Phi_{j+1}}
\leq \alpha \|E e_{j-1}\|_{\Phi_j}
\qquad \text{for } j = 1,2,\ldots ,
\end{align*}
and hence there is a constant $C > 0$ such that
$\|f - f_n\|_{L^2(\Omega)} \leq C \alpha^n \|f\|_{H^\tau(\Omega)}$ for all $n$.
In particular $f_n \to f$ linearly in $L^2(\Omega)$ if $\alpha < 1$.
\end{theorem}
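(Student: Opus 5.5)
The plan is to follow the argument of \cite{wendland2010multiscale}. The first step is a one-level contraction estimate for the error in the native-space norm of the current scale. The second step iterates it and converts the final native-space bound into an $L^2$ bound with the sampling inequality. Throughout, $E$ is the universal extension operator of \Cref{prop:universal_ext_op}.

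For the contraction, fix $j$ and set $g = E e_{j-1}$, which lies in $H^\tau(\R^d) = \calh_{k_{\delta_j}}(\R^d)$. Since $X_j \subset \Omega$, \Cref{lem:native_space_omega}(ii) gives $(g - I_{\delta_j,X_j} g)|_\Omega = e_j$. The function $u \isdef g - I_{\delta_j,X_j} g$ vanishes on $X_j$, and by the projection property $\Vert u\Vert_{\Phi_j} \le \Vert g\Vert_{\Phi_j}$.

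I then bound $\Vert E e_j\Vert_{\Phi_{j+1}}$. Write $\Vert Ee_j\Vert_{\Phi_{j+1}}^2 = (2\pi)^{-d/2}\int |\widehat{Ee_j}|^2/\widehat{\Phi}(\delta_{j+1}\omega)\,d\omega$. By \eqref{eq:fourier_decay_scaled} the weight is at most $c_\Phi^{-1}(1+\delta_{j+1}^2\Vert\omega\Vert_2^2)^{\tau} \le c_\Phi^{-1}(1+\Vert\omega\Vert_2^2)^\tau$, so this norm is controlled by $c_\Phi^{-1/2}\Vert Ee_j\Vert_{H^\tau(\R^d)}$. That is too crude, however: it loses the factor $\mu^\tau$. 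The right route is the splitting of \cite{wendland2010multiscale}, which bounds the weight by $c_\Phi^{-1}\big(1 + \delta_{j+1}^{2\tau}\Vert\omega\Vert_2^{2\tau}\big)$ up to a constant depending on $\tau$. This gives
\begin{align*}
\Vert E e_j\Vert_{\Phi_{j+1}}^2 \lesssim \Vert E e_j\Vert_{L^2(\R^d)}^2 + \delta_{j+1}^{2\tau}\, |Ee_j|_{H^\tau(\R^d)}^2 .
\end{align*}
Using the boundedness of $E$ on $L^2$ and on $H^\tau$ (the same operator for all orders), the right-hand side is at most a constant times $\Vert e_j\Vert_{L^2(\Omega)}^2 + \delta_{j+1}^{2\tau}\Vert e_j\Vert_{H^\tau(\Omega)}^2$.

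Both terms are now estimated through $u$. The $L^2$ term comes from the sampling inequality \eqref{eq:sampling_Lp} with $p=2$ on $X_j$:
\begin{align*}
\Vert e_j\Vert_{L^2(\Omega)} \le C h_j^\tau \Vert u\Vert_{H^\tau(\R^d)} \le C h_j^\tau \delta_j^{-\tau}\, C_\Phi^{1/2}\Vert u\Vert_{\Phi_j} = C\nu^{-\tau}\Vert u\Vert_{\Phi_j},
\end{align*}
where the middle step uses \Cref{lem:rkhs_norms_different_scalings}. For the Sobolev term I need the sharper scaled estimate $\Vert u\Vert_{H^\tau} \le C\delta_j^{-\tau}\Vert u\Vert_{\Phi_j}$, together with the observation that the seminorm alone obeys the better bound $|u|_{H^\tau} \lesssim \delta_j^{-\tau}\Vert u\Vert_{\Phi_j}$. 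This yields $\delta_{j+1}^{\tau}|e_j|_{H^\tau} \lesssim (\delta_{j+1}/\delta_j)^\tau \Vert u\Vert_{\Phi_j} \le \mu^\tau\Vert u\Vert_{\Phi_j}$.

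Combining the two terms gives $\Vert Ee_j\Vert_{\Phi_{j+1}} \lesssim (\nu^{-\tau} + \mu^\tau)\Vert Ee_{j-1}\Vert_{\Phi_j}$. The condition $\nu \ge \gamma/\mu$ then gives $\nu^{-\tau} \le \gamma^{-\tau}\mu^\tau$, so the factor is $C_1(\gamma)\mu^\tau = \alpha$. The assumptions $\delta_j \le 1$ (from $\nu \le 1/h_1$) and $h_1$ small enough for \Cref{th:sampling_ineq} are used at exactly these points. The lower bound $c\mu h_j \le h_{j+1}$ is not needed for the contraction itself.

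For the final estimate, iterating the contraction gives $\Vert Ee_n\Vert_{\Phi_{n+1}} \le \alpha^n \Vert Ef\Vert_{\Phi_1} \lesssim \alpha^n\Vert f\|_{H^\tau(\Omega)}$ by \eqref{eq:Htau_to_native}. The same $L^2$ sampling step as above, applied at level $n$, gives $\Vert f-f_n\Vert_{L^2(\Omega)} = \Vert e_n\Vert_{L^2(\Omega)} \lesssim \Vert u_n\Vert_{\Phi_n} \le \Vert Ee_{n-1}\Vert_{\Phi_n} \le \alpha^{n-1}\Vert Ef\Vert_{\Phi_1}$. The extra factor $\alpha^{-1}$ is absorbed into $C$.

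The main obstacle is the Sobolev-term step. Extension and restriction mix the native-space norm on $\R^d$ with Sobolev norms on $\Omega$, and one has to make sure the full $H^\tau$ norm of $u$ carries only the factor $\delta_j^{-\tau}$ and not worse. I would handle this by working with $u$ on $\R^d$ directly, rather than with $E(u|_\Omega)$, and using $\Vert e_j\Vert_{H^\tau(\Omega)} \le \Vert u\Vert_{H^\tau(\R^d)}$.
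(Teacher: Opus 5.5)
Your proposal is correct and is essentially the paper's argument: the frequency splitting of \Cref{lem:contraction}, with the $L^2(\Omega)$ part controlled by the sampling inequality (giving $\beta = C_2 C_\Phi^{1/2}\nu^{-\tau} \le C_2 C_\Phi^{1/2}\gamma^{-\tau}\mu^\tau$, as in the proof of \Cref{th:multiscale_using_fill_dist}), the high-frequency part by \Cref{lem:interp_error_Htau} together with $\delta_{j+1}\le\mu\delta_j$, and the iteration of \Cref{cor:iteration}. The only muddled point is the seminorm remark, which you do not need: since $\delta_j \le 1$, \Cref{lem:rkhs_norms_different_scalings} already bounds the full norm $\Vert u\Vert_{H^\tau(\R^d)}$ by $C_\Phi^{1/2}\delta_j^{-\tau}\Vert u\Vert_{\Phi_j}$, and combined with $\Vert e_j\Vert_{H^\tau(\Omega)} \le \Vert u\Vert_{H^\tau(\R^d)}$ this is exactly \Cref{lem:interp_error_Htau}, so the ``main obstacle'' you flag is already resolved by the route in your last paragraph.
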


The proof combines the sampling inequality on each level with the norm equivalence of \Cref{lem:rkhs_norms_different_scalings}, which relates the residuals across levels despite the change of scale.
We isolate this mechanism in \Cref{lem:contraction} below.
Since $h_{n} \leq \mu^{n-1} h_1$, the estimate can be rewritten in terms of the fill distance of the finest level, and one recovers, up to constants and an arbitrarily small loss in the exponent, the approximation order of single-scale kernel interpolation with fill distance $h_n$, see \Cref{th:multiscale_using_fill_dist}.
The multiscale construction therefore does not improve the approximation order.
Its advantages lie elsewhere: the scale $\delta_j \asymp h_j$ keeps the ratio $\delta_j / q_{X_j}$ bounded for quasi-uniform sets, which is exactly what keeps the level matrices well conditioned, see \Cref{prop:stability}, and for compactly supported kernels it makes the level matrices sparse and the evaluation local.

\subsection{Greedy kernel algorithms}
\label{subsec:greedy_kernel_algorithm}

Greedy kernel algorithms build interpolation sets one point at a time.
Starting from $X_0  \isdef \emptyset$, the next point is $x_{n+1}  \isdef \argmax_{x \in \Omega} \eta_n(x)$ for an error indicator $\eta_n : \Omega \to \R$.
In practice the maximization runs over a large but finite candidate set $X_N \subset \Omega$.
The $f$-greedy algorithm \cite{schaback2000adaptive}, the $P$-greedy algorithm \cite{marchi2005optimal}, and the hybrids $f/P$-greedy \cite{muller2009komplexitat} and $f \cdot  P$-greedy \cite{dutta2020greedy} are unified by the $\beta$-greedy rule of \cite{wenzel2023analysis},
\begin{align}
\label{eq:beta_greedy}
x_{n+1}  \isdef \argmax_{x\in\Omega\setminus X_n} |(f-s_n)(x)|^\beta   P_n(x)^{1-\beta},
\end{align}
where $s_n = I_{\delta,X_n} f$ and $P_n$ is the power function of $X_n$.
The choice $\beta = 1$ is $f$-greedy and $\beta = 0$ is $P$-greedy.
A convergence theory for $\beta$-greedy algorithms was established in \cite{wenzel2023analysis} and refined in \cite{santin2024optimality}; convergence under mild conditions is studied in \cite{albrecht2025convergence}.
We recall the estimate we build on.

\begin{cor}[{\cite[Corollary~3.9]{santin2024optimality} for $\gamma=1$}]
\label{cor:cor_from_fgreedy_paper}
Let $\Omega \subset \R^d$ have a Lipschitz boundary, let $\tau > d/2$, and assume that $\calh_k(\Omega)$ is continuously embedded into $H^\tau(\Omega)$.
Then any $\beta$-greedy algorithm \eqref{eq:beta_greedy} satisfies for every $f \in \calh_k(\Omega)$ and every $n \in \N$
\begin{align*}
\min_{n+1 \leq i \leq 2n} \|f - I_{X_i} f\|_{L^\infty(\Omega)} \leq c_\beta  \left( n^{-\frac{\tau}{d} + \frac{1-\beta}{2}} \right)^{\frac{1}{\max(1, \beta)}} \|f - I_{X_{n+1}} f\|_{\calh_k(\Omega)} ,
\end{align*}
with a constant $c_\beta > 0$.
\end{cor}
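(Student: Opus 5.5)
The plan is to follow the abstract analysis of $\beta$-greedy algorithms in \cite{wenzel2023analysis,santin2024optimality}. It combines three ingredients: an energy identity for the residuals in $\calh_k(\Omega)$, a geometric bound on products of power function values at the selected points, and the selection rule \eqref{eq:beta_greedy}.

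Write $r_i \isdef f - I_{X_i} f$ and $P_i$ for the power function of $X_i$.

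\textbf{Energy identity.} Expanding the interpolants in the Newton basis gives
\begin{align*}
\Vert r_i \Vert_{\calh_k(\Omega)}^2 - \Vert r_{i+1} \Vert_{\calh_k(\Omega)}^2 = \frac{r_i(x_{i+1})^2}{P_i(x_{i+1})^2}.
\end{align*}
Summing over $i = n+1,\dots,2n$ yields
\begin{align*}
\sum_{i=n+1}^{2n} \frac{r_i(x_{i+1})^2}{P_i(x_{i+1})^2} \leq \Vert r_{n+1} \Vert_{\calh_k(\Omega)}^2.
\end{align*}

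\textbf{Geometric bound.} I will invoke the DeVore--Petrova--Wojtaszczyk type bound on the selected power values,
\begin{align*}
\prod_{i=n+1}^{2n} P_i(x_{i+1}) \leq \bigl( C\, n^{-\tau/d + 1/2} \bigr)^{n}.
\end{align*}
This holds for an arbitrary point sequence, with the rate inherited from the Kolmogorov $n$-widths of the unit ball of $\calh_k(\Omega)$ in $L^\infty(\Omega)$. Since $\calh_k(\Omega) \hookrightarrow H^\tau(\Omega)$, these widths are bounded via \eqref{eq:sampling_Lp} with $p=\infty$ on quasi-uniform sets, giving $n^{-\tau/d+1/2}$.

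\textbf{Selection rule.} By the choice of $x_{i+1}$ in \eqref{eq:beta_greedy},
\begin{align*}
|r_i(x_{i+1})|^\beta P_i(x_{i+1})^{1-\beta} \geq \Vert r_i \Vert_{L^\infty}^\beta\, |P_i(x)|^{1-\beta}
\end{align*}
for suitable test points $x$. In the $f$-greedy case I simply use $|r_i(x_{i+1})| = \Vert r_i\Vert_{L^\infty}$. For $\beta<1$, I test at the point where $|r_i|$ is maximal and use $|r_i(x)| \leq P_i(x)\Vert r_i\Vert_{\calh_k}$ to convert the power function factor into a residual factor.

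\textbf{Combining.} With $\beta=1$: by AM--GM,
\begin{align*}
\prod_{i=n+1}^{2n} \frac{\Vert r_i\Vert_{L^\infty}^2}{P_i(x_{i+1})^2} \leq \Bigl(\tfrac{1}{n}\Vert r_{n+1}\Vert_{\calh_k(\Omega)}^2\Bigr)^{n}.
\end{align*}
Inserting the product bound for $P_i(x_{i+1})$ and bounding the minimum by the geometric mean gives
\begin{align*}
\min_{n+1\le i\le 2n}\Vert r_i\Vert_{L^\infty} \leq C\, n^{-1/2}\, n^{-\tau/d+1/2}\, \Vert r_{n+1}\Vert_{\calh_k(\Omega)},
\end{align*}
which is in fact one factor $n^{-1/2}$ better than the claimed rate $n^{-\tau/d}$.

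For general $\beta$ the same steps run with $\beta$-weighted exponents, and Hölder's inequality replaces AM--GM. The power function factors contribute $n^{(-\tau/d+1/2)(1-\beta)}$-type terms, and solving for $\Vert r_i\Vert_{L^\infty}$ requires the root $1/\max(1,\beta)$.

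\textbf{Main obstacle.} I expect the general-$\beta$ bookkeeping to be the hardest part: choosing the Hölder exponents so that the residual factors telescope against the energy identity, and tracking the exponent $(n^{-\tau/d+(1-\beta)/2})^{1/\max(1,\beta)}$ correctly, especially for $\beta>1$. The widths estimate itself I take from the literature.
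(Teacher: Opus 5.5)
Your skeleton (Newton-basis energy identity, selection rule, AM--GM, minimum bounded by the geometric mean) is the right one. The step that carries the rate, however, does not follow from the tool you cite. A DeVore--Petrova--Wojtaszczyk argument with Kolmogorov widths does not give $\prod_{i=n+1}^{2n} P_i(x_{i+1}) \leq (C n^{-\tau/d+1/2})^n$ for an arbitrary sequence. Their lower-triangular lemma, applied to the window $i = n+1,\dots,2n$, yields for $1 \leq m < n$
\[
\prod_{i=n+1}^{2n} P_i(x_{i+1})^2 \leq \Big(\frac{n}{m}\Big)^{m} \Big(\frac{n}{n-m}\Big)^{n-m} \Vert P_{n+1} \Vert_{L^\infty(\Omega)}^{2m}\, d_m^{2(n-m)},
\]
where $d_m$ are the Kolmogorov widths of $\{k(\cdot,x) : x \in \Omega\}$ in $\calh_k(\Omega)$. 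The factor $\Vert P_{n+1}\Vert_{L^\infty(\Omega)}$ is small only for $P$-greedy, where it is itself a selected power value and the estimate can be iterated. For a target-dependent rule the centres may cluster, and this factor can stay of order one. Then $m = n/2$ gives only the square root of the rate, and even the optimal choice $m \sim n/\log n$ leaves a factor $(\log n)^{\tau/d - 1/2}$. That logarithm is exactly the loss in \cite{wenzel2023analysis} which \cite{santin2024optimality} removes, so your argument, as justified, proves a weaker statement.

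The missing ingredient is the entropy-number bound used in the proof of \Cref{prop:geom_mean_powerfunc_rescaled}. By \cite[Lemma~2.1 and Remark~2.3]{santin2024optimality}, $(\prod_{i=0}^{n-1} P_i(x_{i+1}))^{1/n} \leq \sqrt{5}\, n^{1/2} \varepsilon_n$ for \emph{every} sequence of points, where $\varepsilon_n \leq C n^{-\tau/d}$ are the dyadic entropy numbers of $\conv(\{k(\cdot,x) : x\in\Omega\})$. No control of $\Vert P_{n+1}\Vert_{L^\infty(\Omega)}$ is needed. To shift the window to $i = n+1,\dots,2n$, apply the bound to $g_j \isdef \Pi\, k(\cdot,x_{n+1+j})$, $j = 1,\dots,n$, with $\Pi$ the orthogonal projection onto $V_{n+1}^\perp$. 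Then $P_{n+j}(x_{n+j+1}) = \mathrm{dist}(g_j, \spn\{g_1,\dots,g_{j-1}\})$, and since $\Pi$ is a contraction the projected hull has no larger entropy numbers.

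With this the rest of your argument works, and the general-$\beta$ case needs no H\"older. Set $q_i \isdef |r_i(x_{i+1})|/P_i(x_{i+1})$; the selected quantity equals $q_i^\beta P_i(x_{i+1})$. Your test-point argument bounds it below by $\Vert r_i\Vert_{L^\infty(\Omega)} \Vert r_{n+1}\Vert_{\calh_k(\Omega)}^{\beta-1}$ if $\beta \leq 1$, and by $c\, \Vert r_i\Vert_{L^\infty(\Omega)}^\beta$ if $\beta \geq 1$ (because $P_i$ is bounded and $1-\beta \leq 0$). AM--GM on the $q_i^2$ then gives the stated exponents. Finally, $n^{-1/2} \cdot n^{-\tau/d+1/2} = n^{-\tau/d}$ is exactly the claimed $f$-greedy rate, not an improvement on it.
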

Together with the monotonicity of the RKHS interpolation error this gives, see \cite[Eq.~(3.9)]{santin2024optimality},
\begin{align}
\label{eq:beta_greedy_error}
\min_{1 \leq i \leq m} \|f - I_{X_i} f\|_{L^\infty(\Omega)} \leq c_\beta''
\left( m^{-\frac{\tau}{d} + \frac{1 - \beta}{2}} \right)^{\frac{1}{\max(1, \beta)}}
\|f\|_{\calh_k(\Omega)} .
\end{align}
For $f$-greedy, $\beta = 1$, the rate is $m^{-\tau/d}$.
For quasi-uniform centres the sampling inequality gives 
$$h^{\tau - d/2} \asymp m^{-\tau/d + 1/2}$$
in $L^\infty(\Omega)$, so the $f$-greedy rate is better by exactly the factor $m^{1/2}$, independently of $\tau$ and $d$.
This gap is the benefit of target-dependent selection that we want to extend to the multiscale setting.

\section{Greedy multiscale algorithms}
\label{sec:multiscale_greedy}
\subsection{The algorithm}

Our goal is to combine the adaptive centre selection of greedy kernel methods with the hierarchical residual correction of multiscale kernel methods.
Instead of prescribing quasi-uniform point sets, we run a greedy algorithm on every level of the hierarchy: level $\ell$ approximates the residual $e_{\ell-1}$ of the previous levels with the kernel $k_{\delta_\ell}$, and the centres of the level are chosen by the greedy rule applied to $e_{\ell-1}$.
The procedure is summarized in \Cref{alg:multiscale_greedy}.

\begin{algorithm}
\caption{Greedy multiscale approximation}
\label{alg:multiscale_greedy}
\textbf{Input}: target $f$, scales $\delta_1 > \delta_2 > \dots > \delta_n$, level sizes $M_1, \dots, M_n$ \\
\textbf{Output}: approximation $f_n = s_1 + \dots + s_n$
\begin{algorithmic}[1]
\State Set $f_0 = 0$, $e_0 = f$
\For{$\ell = 1, 2, \dots, n$}
    \State Run the greedy algorithm \eqref{eq:beta_greedy} on $e_{\ell-1}$ with kernel $k_{\delta_\ell}$ for $M_\ell$ steps, producing $X_1 \subset \dots \subset X_{M_\ell}$
    \State Let $Z_\ell  \isdef X_{i^*}$ with $i^*  \isdef \argmin_{1 \leq i \leq M_\ell} \Vert e_{\ell-1} - I_{\delta_\ell, X_i} e_{\ell-1} \Vert_{L^\infty(\Omega)}$ \label{line:best_iterate}
    \State $s_\ell = I_{\delta_\ell, Z_\ell} e_{\ell-1}$
    \State $f_\ell = f_{\ell-1} + s_\ell$, \quad $e_\ell = e_{\ell-1} - s_\ell$
\EndFor
\end{algorithmic}
\end{algorithm}

We recall that, in practice, $\Omega$ is replaced by a finite candidate set $X_N$, the maxima are taken over $X_N$, and the residual is only known on $X_N$.

The level sizes are the analogue, in terms of numbers of points, of the classical choice of mesh size and kernel scale \(h_\ell = \mu h_{\ell-1}\), \(\delta_\ell = \nu h_\ell\) from \Cref{th:conv_result_wendland}. For quasi-uniform point sets, the number of points satisfies \(M_\ell \asymp h_\ell^{-d}\), so the geometric reduction of the fill distance translates into a geometric growth of the cardinalities. In terms of the number of points, this becomes
\begin{align}
\label{eq:level_cardinality}
M_{\ell} = \mu^{-d} M_{\ell-1}, \qquad \delta_\ell = \nu M_\ell^{-1/d},
\end{align}
with \(\mu \in (0,1)\) and \(\nu \geq 1\). 
\subsection{Relation to the classical multiscale method}
\label{subsec:relation_classical}

The $P$-greedy algorithm selects centres that become asymptotically quasi-uniformly distributed, see for instance \cite{wenzel2021novel}, so the multiscale $P$-greedy algorithm produces, level by level, the quasi-uniform sets on which the classical construction of \Cref{alg:standard_multiscale_algorithm} is built.
In this sense the classical multiscale method can be understood as \Cref{alg:multiscale_greedy} with the $P$-greedy rule.
We note, however, that this is not an argument in favour of multiscale $P$-greedy as a method:
a quasi-uniform subset of the data can be produced by geometric thinning at a cost that is linear, up to logarithms, in the number of data points, whereas a greedy selection costs $\mathcal{O}(N M^2)$ for $M$ centres out of $N$ candidates, see \Cref{sec:cost}.
The interesting members of the family are the target-dependent rules, $\beta > 0$, and in particular $f$-greedy, for which the level sets adapt to the residual and the classical assumption of quasi-uniformity is given up.
We also recall that nested level sets, required by the original construction of \cite{floater1996multistep}, are not needed for the analysis.
\subsection{Computational cost}
\label{sec:cost}

Let $N$ be the number of candidates, $N'$ the number of evaluation points, $M_\ell$ the number of centres of level $\ell$ and $N_n  \isdef \sum_{\ell \leq n} M_\ell$ the total number of centres.
A greedy step on a level with $M$ current centres updates the residual, or the Newton basis, on all candidates, at a cost of $\mathcal{O}(NM)$ operations, and performs $\mathcal{O}(M^2)$ operations of linear algebra for the incremental factorization. The memory is $\mathcal{O}(NM)$ for the kernel columns on the candidates.
A level with $M_\ell$ centres therefore costs $\mathcal{O}(N M_\ell^2)$, and with the growth $M_{\ell+1} \geq \mu^{-d} M_\ell$ the whole hierarchy costs
\begin{align*}
\sum_{\ell=1}^n \mathcal{O}(N M_\ell^2) = \mathcal{O}\bigg( \frac{N M_n^2}{1 - \mu^{2d}} \bigg) .
\end{align*}
The single-scale $f$-greedy algorithm with the same total number of centres $N_n \leq M_n/(1-\mu^d)$ costs $\mathcal{O}(N N_n^2)$.
The two counts are of the same order, and their ratio is bounded by the constant $(1+\mu^d)/(1-\mu^d)$, which is $5/3$ for $d=2$ and $\mu = 1/2$.
The multiscale structure therefore does not reduce the cost of the greedy selection.
The evaluation of the final approximant at $N'$ points costs $\mathcal{O}(N' N_n)$ for globally supported kernels, and $\mathcal{O}(N' n)$ for compactly supported kernels with $\delta_\ell \asymp h_\ell$, since every evaluation point then interacts with a bounded number of centres on each of the $n$ levels.

For comparison, the classical \Cref{alg:standard_multiscale_algorithm} on quasi-uniform sets solves one linear system per level, whose condition number is bounded independently of the level, see \cite{wendland2010multiscale, avesani2025multiscale}.
With a compactly supported kernel the matrix has a bounded number of nonzeros per row and a level costs $\mathcal{O}(M_\ell)$ operations.
With a globally supported kernel the level matrix is compressed in samplet coordinates, which leaves $\mathcal{O}(M_\ell \log M_\ell)$ entries \cite{avesani2025multiscale}, so that the assembly, every matrix-vector product and hence the whole level cost $\mathcal{O}(M_\ell \log M_\ell)$ operations. The transfer of the residual to the next level uses the same compression.
The quasi-uniform hierarchy itself is obtained from the data by cell-based thinning in $\mathcal{O}(N)$ operations per level.
Summed over the levels, the classical method costs $\mathcal{O}(N n + N_n)$ operations with compactly and $\mathcal{O}(N n + N_n \log N_n)$ with globally supported kernels, against $\mathcal{O}(N M_n^2)$ for the greedy selection.
The evaluation of a globally supported expansion at $N'$ points can be compressed in the same way for every method. We report the dense count $\mathcal{O}(N' N_n)$ for the greedy methods, which is what the implementation does.
\Cref{tab:cost} summarizes the counts.

\begin{table}[htb]
\centering
\small
\renewcommand{\arraystretch}{1.25}
\setlength{\tabcolsep}{4pt}
\begin{tabular}{l c c}
\hline
 & selection and solves & evaluation \\
\hline
single-scale $f$- or $P$-greedy, $N_n$ centres & $\mathcal{O}(N N_n^2)$ & $\mathcal{O}(N' N_n)$  \\
multiscale $f$- or $P$-greedy, \Cref{alg:multiscale_greedy} & $\mathcal{O}(N M_n^2)$ & $\mathcal{O}(N' N_n)$  \\
multiscale globally supported kernels, \Cref{alg:standard_multiscale_algorithm} & $\mathcal{O}(N n + N_n \log N_n)$ & $\mathcal{O}((N' + N_n) \log N')$  \\
multiscale compactly supported kernels, \Cref{alg:standard_multiscale_algorithm} & $\mathcal{O}(N n + N_n)$ & $\mathcal{O}(N' n)$ \\
\hline
\end{tabular}
\caption{Operation counts for $N$ candidates, $n$ levels with $M_\ell$ centres, $N_n = \sum_\ell M_\ell \asymp M_n$ centres in total, and $N'$ evaluation points.}
\label{tab:cost}
\end{table}
\section{Analysis}
\label{sec:analysis}

In \Cref{subsec:analysis_rescaled} we track the dependence on the scale $\delta$ in the convergence analysis of the $f$-greedy algorithm.
In \Cref{subsec:contraction} we isolate the core mechanism of the multiscale analysis in a contraction lemma, namely \Cref{lem:contraction}. Starting from this general lemma, in \Cref{subsec:error_bounds_standard} and \Cref{subsec:error_bounds_greedy_multiscale} we deduce the convergence of the classical multiscale method and of the multiscale $f$-greedy algorithm.
In \Cref{subsec:stability} we prove the two-sided estimate for the condition numbers of the level matrices, see \Cref{prop:stability} and \Cref{th:tradeoff}. Its lower bound shows that no single-scale method can be guaranteed to be accurate without becoming ill conditioned.
Finally, \Cref{subsec:stability_convergence} combines the two ingredients into a single statement, \Cref{th:stability_convergence},
valid for any construction of the levels: the reduction of the residual on a level governs the convergence,
and the ratio between the kernel scale and the separation of its centres governs the stability.
The classical multiscale method and the multiscale $f$-greedy algorithm are two cases of \Cref{th:stability_convergence}, the first controlling the separation of the centres and the second the reduction of the residual.
\subsection{\texorpdfstring{The $f$-greedy algorithm with rescaled kernels}{The f-greedy algorithm with rescaled kernels}}
\label{subsec:analysis_rescaled}

We first extend the crucial steps of the $f$-greedy analysis of \cite{wenzel2023analysis,santin2024optimality} to the rescaled kernel $k_\delta$, keeping track of the constants.

\begin{prop}
\label{prop:eigenvalue_equivalence}
Let $\Phi$ satisfy \ref{ass:fourier} and let $\delta \in (0,1]$.
Then for every finite $X \subset \R^d$ the eigenvalues of the kernel matrices satisfy
\begin{align}
\label{eq:equivalence_eigenvalues}
\frac{c_\Phi}{C_\Phi}   \lambda_j(\Kmat{1}{X}) \leq \lambda_j(\Kmat{\delta}{X}) \leq \frac{C_\Phi}{c_\Phi} \delta^{-2\tau}   \lambda_j(\Kmat{1}{X}), \qquad j = 1, \dots, |X|.
\end{align}
\end{prop}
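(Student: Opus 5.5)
The plan is to compare the quadratic forms of the two kernel matrices through the Fourier representation and then pass from quadratic forms to individual eigenvalues with the Courant--Fischer min-max principle. Let $X = \{x_1,\dots,x_m\}$ be pairwise distinct and $\alpha \in \R^m$. By \ref{ass:integrable}, $\Phi_\delta \in L^1(\R^d)\cap C(\R^d)$ with $\widehat{\Phi_\delta} \geq 0$ integrable by \ref{ass:fourier} (since $\tau > d/2$). Fourier inversion therefore holds pointwise and gives
\begin{align*}
\alpha^\top \Kmat{\delta}{X} \alpha
= \sum_{i,j} \alpha_i \alpha_j \Phi_\delta(x_i - x_j)
= (2\pi)^{-d/2} \int_{\R^d} \Big| \sum_{j=1}^m \alpha_j e^{i x_j^\top \omega} \Big|^2 \widehat{\Phi_\delta}(\omega) \,\mathrm{d}\omega ,
\end{align*}
and the same identity holds with $\delta = 1$. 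The weight $|\sum_j \alpha_j e^{i x_j^\top\omega}|^2$ is nonnegative and does not depend on $\delta$, so pointwise bounds on $\widehat{\Phi_\delta}$ in terms of $\widehat{\Phi}$ transfer directly to the quadratic forms.

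The pointwise comparison follows by chaining \eqref{eq:fourier_decay_scaled}, \eqref{eq:equivalence_fourier_weights} and \eqref{eq:fourier_decay}:
\begin{align*}
\widehat{\Phi_\delta}(\omega) \geq c_\Phi (1+\delta^2\Vert\omega\Vert_2^2)^{-\tau} \geq c_\Phi (1+\Vert\omega\Vert_2^2)^{-\tau} \geq \frac{c_\Phi}{C_\Phi} \widehat{\Phi}(\omega),
\end{align*}
and, in the other direction,
\begin{align*}
\widehat{\Phi_\delta}(\omega) \leq C_\Phi (1+\delta^2\Vert\omega\Vert_2^2)^{-\tau} \leq C_\Phi \delta^{-2\tau}(1+\Vert\omega\Vert_2^2)^{-\tau} \leq \frac{C_\Phi}{c_\Phi}\delta^{-2\tau}\widehat{\Phi}(\omega).
\end{align*}
Inserting these into the integral representation gives, for every $\alpha \in \R^m$,
\begin{align*}
\frac{c_\Phi}{C_\Phi}\, \alpha^\top \Kmat{1}{X} \alpha \leq \alpha^\top \Kmat{\delta}{X}\alpha \leq \frac{C_\Phi}{c_\Phi}\delta^{-2\tau}\, \alpha^\top \Kmat{1}{X}\alpha .
\end{align*}
In Loewner order this reads $\tfrac{c_\Phi}{C_\Phi}\Kmat{1}{X} \preceq \Kmat{\delta}{X} \preceq \tfrac{C_\Phi}{c_\Phi}\delta^{-2\tau}\Kmat{1}{X}$.

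Both matrices are symmetric, so the Courant--Fischer characterization
\begin{align*}
\lambda_j(A) = \max_{\dim S = j}\ \min_{\alpha \in S\setminus\{0\}} \frac{\alpha^\top A \alpha}{\alpha^\top\alpha}
\end{align*}
applies with the same eigenvalue ordering for both. Since the Rayleigh quotients are ordered for every $\alpha$, the min-max values are ordered too (Weyl monotonicity), and this yields \eqref{eq:equivalence_eigenvalues} for each $j$.

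No step is a genuine obstacle. The only point needing care is justifying the Fourier representation of the quadratic form, i.e.\ that inversion holds pointwise for $\Phi_\delta$. This follows from continuity together with $\widehat{\Phi_\delta}\in L^1$, and the latter comes from the upper bound in \eqref{eq:fourier_decay_scaled} with $2\tau > d$. One should also check that the scaling convention $\widehat{\Phi_\delta}(\omega)=\widehat{\Phi}(\delta\omega)$ is consistent with the factor $\delta^{-d}$ in \eqref{eq:rescaled_kernel}, which it is. Note that the argument works for arbitrary finite $X\subset\R^d$, not only subsets of $\Omega$.
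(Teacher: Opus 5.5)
Your proposal is correct and follows the paper's proof essentially step for step. It uses the same pointwise comparison $\tfrac{c_\Phi}{C_\Phi}\widehat{\Phi} \leq \widehat{\Phi_\delta} \leq \tfrac{C_\Phi}{c_\Phi}\delta^{-2\tau}\widehat{\Phi}$ from chaining \eqref{eq:fourier_decay_scaled}, \eqref{eq:equivalence_fourier_weights} and \eqref{eq:fourier_decay}, the same Fourier representation of the quadratic form, and the Rayleigh quotient (Courant--Fischer) characterization to pass to individual eigenvalues; your justification of pointwise Fourier inversion via \ref{ass:integrable} and $\widehat{\Phi_\delta} \in L^1$ is a detail the paper leaves implicit, since the statement formally cites only \ref{ass:fourier}.
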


\begin{proof}
By combining \eqref{eq:fourier_decay}, \eqref{eq:fourier_decay_scaled} and \eqref{eq:equivalence_fourier_weights},
we obtain
\begin{align*}
\frac{c_\Phi}{C_\Phi} \cdot \hat{\Phi}(\omega) \leq \hat{\Phi}_\delta(\omega) \leq \frac{C_\Phi}{c_\Phi} \delta^{-2\tau} \cdot \hat{\Phi}(\omega).
\end{align*}
Next we recall the well-known identity
\begin{align*}
\alpha^\top k(X, X) \alpha = (2\pi)^{-d/2} \cdot \int_{\R^d} \hat{\Phi}(\omega) \cdot \left| \sum_{j=1}^{|X|} \alpha_j e^{i \omega^\top x_j} \right|^2 ~ \mathrm{d}\omega.
\end{align*}
Combining the previous two (in)equalities yields
\begin{align*}
\frac{c_\Phi}{C_\Phi} \alpha^\top k(X, X)\alpha \leq \alpha^\top k_\delta(X, X) \alpha \leq \frac{C_\Phi}{c_\Phi} \delta^{-2\tau} \cdot \alpha^\top k(X, X)\alpha.
\end{align*}
The result then follows by applying the Rayleigh quotient characterization of the eigenvalues.
\end{proof}

For a sequence of points $x_1, x_2, \ldots$ we write $X_i  \isdef \{x_1, \dots, x_i\}$, $X_0  \isdef \emptyset$, and $P_{\delta,i}$ for the power function of $X_i$ with respect to $k_\delta$.

\begin{prop}
\label{prop:geom_mean_powerfunc_rescaled}
Let $\Omega$ have a sufficiently smooth boundary, let $\Phi$ satisfy \ref{ass:fourier} and let $\delta \in (0,1]$.
Then there is a constant $c > 0$, independent of $\delta$, such that for every sequence $\{ x_i \}_{i \geq 1} \subset \Omega$ of pairwise distinct points and every $n \in \N$
\begin{align*}
\left( \prod_{i=0}^{n-1} P_{\delta, i}(x_{i+1}) \right)^{1/n} \leq c \delta^{-\tau} n^{-\tau / d + 1/2}.
\end{align*}
\end{prop}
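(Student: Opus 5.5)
Our approach follows the single-scale argument of \cite{santin2024optimality,wenzel2023analysis} and tracks where $\delta$ enters. The key identity is that the product of the power function values along the greedy sequence equals the determinant of the kernel matrix. With the Newton basis, or equivalently the Cholesky factor of $\Kmat{\delta}{X_n}$ in the ordering $x_1,\dots,x_n$, the squared diagonal entries are exactly $P_{\delta,i}(x_{i+1})^2$. Hence
\begin{align*}
\prod_{i=0}^{n-1} P_{\delta,i}(x_{i+1})^2 = \det \Kmat{\delta}{X_n} = \prod_{j=1}^n \lambda_j(\Kmat{\delta}{X_n}).
\end{align*}
So the geometric mean of the $P_{\delta,i}(x_{i+1})$ is the $2n$-th root of a product of eigenvalues. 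This reduces the claim to a bound on the eigenvalues of $\Kmat{\delta}{X_n}$.

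Next we apply \Cref{prop:eigenvalue_equivalence} eigenvalue by eigenvalue. The upper bound gives
\begin{align*}
\prod_{j=1}^n \lambda_j(\Kmat{\delta}{X_n}) \leq \Big(\tfrac{C_\Phi}{c_\Phi}\Big)^n \delta^{-2\tau n} \prod_{j=1}^n \lambda_j(\Kmat{1}{X_n}).
\end{align*}
Taking $2n$-th roots yields
\begin{align*}
\Big(\prod_{i} P_{\delta,i}(x_{i+1})\Big)^{1/n} \leq (C_\Phi/c_\Phi)^{1/2}\, \delta^{-\tau} \Big(\prod_i P_{1,i}(x_{i+1})\Big)^{1/n}.
\end{align*}
The scaling $\delta^{-\tau}$ appears with a $\delta$-independent constant, which is why no further care about $\delta$ is needed.

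It remains to use the unscaled bound $(\prod_i P_{1,i}(x_{i+1}))^{1/n} \leq c' n^{-\tau/d+1/2}$. This is the result proved in \cite{wenzel2023analysis,santin2024optimality} for kernels whose native space on $\Omega$ is norm-equivalent to $H^\tau(\Omega)$, and it holds here by \ref{ass:fourier}. Its proof combines two ingredients. The first is a linear-algebra inequality relating the geometric mean of the $P_i(x_{i+1})$ to Kolmogorov $n$-widths, namely $\prod_i P_i(x_{i+1}) \lesssim \prod_i d_i$ with a Binet--Cauchy/determinant argument. The second is the Kolmogorov width estimate $d_n(\calh_k(\Omega) \hookrightarrow L^\infty(\Omega)) \lesssim n^{-\tau/d+1/2}$ for Sobolev embeddings, which is where the smooth boundary assumption is used. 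We treat this step as a citation rather than reproving it.

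The main obstacle is confirming that the determinant identity holds exactly in the stated indexing: $P_{\delta,0}(x_1)=\sqrt{k_\delta(x_1,x_1)}$ with $X_0=\emptyset$, and points pairwise distinct so that $\Kmat{\delta}{X_n}$ is positive definite. A second check is that the cited unscaled result is formulated for arbitrary sequences of points, not only greedy ones, so that it applies to any $\{x_i\}$. If it were stated only for greedy sequences, we would reopen the width argument. That argument uses only the Schur-complement structure and the widths, so it is valid for any sequence.
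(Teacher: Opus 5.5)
Your proposal is correct and is the paper's proof: you use the Cholesky/determinant identity $\prod_{i=0}^{n-1} P_{\delta,i}(x_{i+1})^2=\det \Kmat{\delta}{X_n}$, then the eigenvalue-wise comparison of \Cref{prop:eigenvalue_equivalence}, which gives the factor $\sqrt{C_\Phi/c_\Phi}\,\delta^{-\tau}$, and finally the cited $\delta$-free bound, which is valid for arbitrary sequences.

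Your description of how that cited bound is proved needs correcting, although your proof does not depend on it. The paper takes the bound from the entropy-number estimate of \cite{santin2024optimality}, $\bigl(\prod_{i=0}^{n-1} P_{1,i}(x_{i+1})\bigr)^{1/n}\le\sqrt{5}\,n^{1/2}\varepsilon_n$ with $\varepsilon_n\le C n^{-\tau/d}$. The Kolmogorov-width inequality of \cite{wenzel2023analysis}, as I recall it, gives the rate only up to a logarithmic factor. Moreover, the width that enters such an argument is that of $\{k(\cdot,x):x\in\Omega\}$ in $\calh_k(\Omega)$, which is a supremum of power functions and is bounded via quasi-uniform points. It is not the Kolmogorov width of $\calh_k(\Omega)\hookrightarrow L^\infty(\Omega)$, which is dominated by it and so cannot bound it from above.
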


\begin{proof}
As in \cite[Proof of Proposition 4.1]{santin2024optimality}, the product of the power function values is the product of the diagonal entries of the Cholesky factor of $\Kmat{\delta}{X_n}$, so that $\prod_{i=0}^{n-1} P_{\delta,i}(x_{i+1})^2 = \det \Kmat{\delta}{X_n} = \prod_{j=1}^n \lambda_j(\Kmat{\delta}{X_n})$.
By \eqref{eq:equivalence_eigenvalues},
\begin{align*}
\left( \prod_{i=0}^{n-1} P_{\delta, i}(x_{i+1}) \right)^{1/n}
&= \left( \prod_{j=1}^{n} \lambda_j(\Kmat{\delta}{X_n}) \right)^{1/(2n)}
\leq \sqrt{\frac{C_\Phi}{c_\Phi}} \delta^{-\tau} \left( \prod_{j=1}^n \lambda_j(\Kmat{1}{X_n}) \right)^{1/(2n)} \\
&= \sqrt{\frac{C_\Phi}{c_\Phi}} \delta^{-\tau} \left( \prod_{i=0}^{n-1} P_{1, i}(x_{i+1}) \right)^{1/n}.
\end{align*}
The last factor no longer depends on $\delta$ and is estimated as in \cite[Proof of Theorem 3.4]{santin2024optimality}:
with $v_i  \isdef k( \cdot , x_i)$ and $V_i  \isdef \spn \{ v_j : 1 \leq j \leq i \}$, \cite[Lemma 2.1 and Remark 2.3]{santin2024optimality} give
$( \prod_{i=0}^{n-1} P_{1, i}(x_{i+1}) )^{1/n} \leq \sqrt{5} n^{1/2} \varepsilon_n$,
where $\varepsilon_n$ is the dyadic entropy number of $\conv(\{k( \cdot ,x) : x \in \Omega\})$ in $\ns$, and \cite[Section 3.4]{santin2024optimality} shows $\varepsilon_n \leq C n^{-\tau/d}$ because $\ns \hookrightarrow W_2^\tau(\Omega)$ by \ref{ass:fourier}.
The claim follows with $c = \sqrt{5} C \sqrt{C_\Phi / c_\Phi}$.
\end{proof}

\begin{cor}
\label{cor:f_greedy_rescaled_kernel}
Under the assumptions of \Cref{prop:geom_mean_powerfunc_rescaled}, the $f$-greedy algorithm with kernel $k_\delta$ satisfies for every $f \in \calh_{k_\delta}(\Omega)$ and every $n \in \N$
\begin{align*}
\min_{0 \leq i \leq n-1} \Vert f - I_{\delta, X_i} f \Vert_{L^\infty(\Omega)}
\leq c \delta^{-\tau} n^{-\tau/d} \Vert f \Vert_{\calh_{k_\delta}(\Omega)} ,
\end{align*}
with $c$ as in \Cref{prop:geom_mean_powerfunc_rescaled}.
\end{cor}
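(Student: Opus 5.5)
The plan is to reproduce the standard $f$-greedy argument of \cite{wenzel2023analysis,santin2024optimality}, now for the kernel $k_\delta$, with \Cref{prop:geom_mean_powerfunc_rescaled} taking the place of the unscaled geometric-mean bound on the power function. Write $r_i \isdef f - I_{\delta,X_i} f$ for the residual after $i$ greedy steps. Since the $f$-greedy rule picks $x_{i+1}$ as a maximiser of $|r_i|$ over $\Omega$, we have $\Vert r_i\Vert_{L^\infty(\Omega)} = |r_i(x_{i+1})|$. I will combine this with the RKHS energy identity of greedy insertion.

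\emph{Step 1: the energy identity.} The interpolant is an orthogonal projection (\Cref{lem:native_space_omega}(ii)), and $X_{i+1}$ adds one point to $X_i$. The increment $I_{\delta,X_{i+1}}f - I_{\delta,X_i}f$ is therefore the component of $f$ along the normalised Newton basis function $v_{i+1}$, which satisfies $\langle f, v_{i+1}\rangle = r_i(x_{i+1})/P_{\delta,i}(x_{i+1})$. Hence
\begin{align*}
\Vert r_i\Vert_{\calh_{k_\delta}(\Omega)}^2 - \Vert r_{i+1}\Vert_{\calh_{k_\delta}(\Omega)}^2 = \frac{r_i(x_{i+1})^2}{P_{\delta,i}(x_{i+1})^2}.
\end{align*}
Summing over $i=0,\dots,n-1$ telescopes to
\begin{align*}
\sum_{i=0}^{n-1} \frac{\Vert r_i\Vert_{L^\infty(\Omega)}^2}{P_{\delta,i}(x_{i+1})^2} \le \Vert f\Vert_{\calh_{k_\delta}(\Omega)}^2 .
\end{align*}
We may assume $r_i \neq 0$ for all $i \le n-1$; otherwise the minimum on the left of the claim is $0$ and there is nothing to prove. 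In particular all $P_{\delta,i}(x_{i+1})>0$.

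\emph{Step 2: AM--GM.} Set $a_i \isdef \Vert r_i\Vert_{L^\infty(\Omega)}/P_{\delta,i}(x_{i+1})$. By the arithmetic--geometric mean inequality, $\bigl(\prod_{i=0}^{n-1} a_i^2\bigr)^{1/n} \le \frac1n \sum_{i=0}^{n-1} a_i^2 \le \frac1n \Vert f\Vert_{\calh_{k_\delta}(\Omega)}^2$. Separating numerators and denominators gives
\begin{align*}
\Bigl(\prod_{i=0}^{n-1}\Vert r_i\Vert_{L^\infty(\Omega)}\Bigr)^{1/n} \le n^{-1/2}\Bigl(\prod_{i=0}^{n-1}P_{\delta,i}(x_{i+1})\Bigr)^{1/n}\Vert f\Vert_{\calh_{k_\delta}(\Omega)} .
\end{align*}

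\emph{Step 3: conclusion.} The minimum of the $\Vert r_i\Vert_{L^\infty(\Omega)}$ over $0 \le i \le n-1$ is bounded by their geometric mean. Inserting \Cref{prop:geom_mean_powerfunc_rescaled}, which applies because the $f$-greedy points are pairwise distinct as long as the residuals are nonzero, bounds the power-function product by $c\,\delta^{-\tau} n^{-\tau/d+1/2}$. Multiplying by $n^{-1/2}$ yields $c\,\delta^{-\tau}n^{-\tau/d}$ with the same constant $c$, which is the claim.

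The only step that needs care is Step 1, specifically the Newton-basis identity $\langle f, v_{i+1}\rangle = r_i(x_{i+1})/P_{\delta,i}(x_{i+1})$ in $\calh_{k_\delta}(\Omega)$. It follows from the reproducing property and the fact that $r_i$ is orthogonal to $V_{\delta,X_i}$. All $\delta$-dependence enters through \Cref{prop:geom_mean_powerfunc_rescaled}, so no constant picks up additional scale dependence. If the maximum over $\Omega$ is only approximately attained, or is taken over a candidate set, the same argument goes through with a multiplicative slack, but for the statement as written exact maximisation is assumed.
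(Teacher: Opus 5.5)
Your proposal is correct and follows the paper's proof step for step. The steps are the same: the Newton-basis energy bound $\sum_{i=0}^{n-1} (r_i(x_{i+1})/P_{\delta,i}(x_{i+1}))^2 \leq \Vert f \Vert_{\calh_{k_\delta}(\Omega)}^2$, then AM--GM, then $|r_i(x_{i+1})| = \Vert r_i \Vert_{L^\infty(\Omega)}$ from the $f$-greedy rule, then the min-versus-geometric-mean bound combined with \Cref{prop:geom_mean_powerfunc_rescaled}. The differences are cosmetic. You derive the energy bound by telescoping the one-step Pythagoras identity instead of citing the Newton basis expansion. You also state explicitly the reduction to nonvanishing residuals, which guarantees $P_{\delta,i}(x_{i+1}) > 0$ and pairwise distinct points; the paper leaves this implicit.
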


\begin{proof}
We follow \cite[Lemma 6]{wenzel2023analysis} and \cite[Proposition 3.3]{santin2024optimality}.
Let $r_{\delta, i}  \isdef f - I_{\delta, X_i} f$.
The Newton basis expansion of the interpolant gives $\sum_{i=0}^{n-1} ( r_{\delta,i}(x_{i+1}) / P_{\delta,i}(x_{i+1}) )^2 = \Vert I_{\delta,X_n} f \Vert_{\calh_{k_\delta}(\Omega)}^2 \leq \Vert f \Vert_{\calh_{k_\delta}(\Omega)}^2$, see \cite[Eq.~(7)]{wenzel2023analysis} and \Cref{lem:native_space_omega}(ii).
By the inequality between geometric and arithmetic means,
\begin{align*}
\left[ \prod_{i=0}^{n-1} \left( \frac{r_{\delta, i}(x_{i+1})}{P_{\delta, i}(x_{i+1})} \right)^2 \right]^{1/n}
\leq \frac{1}{n} \sum_{i=0}^{n-1} \left( \frac{r_{\delta, i}(x_{i+1})}{P_{\delta, i}(x_{i+1})} \right)^2
\leq \frac{1}{n} \Vert f \Vert_{\calh_{k_\delta}(\Omega)}^2 ,
\end{align*}
hence
$( \prod_{i=0}^{n-1} |r_{\delta, i}(x_{i+1})| )^{1/n} \leq n^{-1/2} ( \prod_{i=0}^{n-1} P_{\delta, i}(x_{i+1}) )^{1/n} \Vert f \Vert_{\calh_{k_\delta}(\Omega)}$.
The $f$-greedy selection gives $|r_{\delta, i}(x_{i+1})| = \Vert r_{\delta, i} \Vert_{L^\infty(\Omega)}$, the minimum of $n$ numbers is at most their geometric mean, and \Cref{prop:geom_mean_powerfunc_rescaled} bounds the geometric mean of the power function values.
\end{proof}

The meaning of \emph{sufficiently smooth boundary} in \Cref{prop:geom_mean_powerfunc_rescaled} and \Cref{cor:f_greedy_rescaled_kernel} is the one required for the entropy number bounds of \cite{santin2024optimality}. We refer to \cite[Remark 3.6]{santin2024optimality} for a discussion.
\subsection{The multiscale contraction lemma}
\label{subsec:contraction}

The convergence proofs of \cite{wendland2010multiscale,wendland2017multiscale} and the ones below share one mechanism: an $L^2(\Omega)$ estimate of the residual of a level, measured against the RKHS norm of the level's input, is turned into a contraction of the RKHS norms across the change of scale.
We formulate this mechanism once.
The first ingredient is the following $H^\tau(\Omega)$ estimate for kernel interpolation, which is \cite[Eq.~(24)]{wendland2017multiscale}.

\begin{lemma}
\label{lem:interp_error_Htau}
Let $g \in H^\tau(\Omega)$, $\delta \in (0,1]$ and let $Z \subset \Omega$ be finite.
Then
\begin{align*}
\Vert g - I_{\delta,Z} g \Vert_{H^\tau(\Omega)} \leq C_\Phi^{1/2} \delta^{-\tau} \Vert E g \Vert_{\Phi_\delta} .
\end{align*}
\end{lemma}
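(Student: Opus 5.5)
The plan is to work on all of $\R^d$ with the extension $G \isdef Eg \in H^\tau(\R^d) = \calh_{k_\delta}(\R^d)$, see \Cref{prop:universal_ext_op} and \Cref{lem:rkhs_norms_different_scalings}, and to pass back to $\Omega$ only at the end. By \Cref{lem:native_space_omega}(ii), the interpolant commutes with restriction: $(I_{\delta,Z} G)|_\Omega = I_{\delta,Z} g$ because $Z \subset \Omega$. Hence $(G - I_{\delta,Z} G)|_\Omega = g - I_{\delta,Z} g$, so $G - I_{\delta,Z}G$ is an extension of $g - I_{\delta,Z} g$ to $\R^d$.

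Next I would bound the $H^\tau(\Omega)$ norm by the norm of this particular extension. The $H^\tau(\Omega)$ norm of a restriction is at most the $H^\tau(\R^d)$ norm of the function being restricted; this holds either by definition of the restriction norm or, for the intrinsic Sobolev–Slobodeckij norm, trivially, since integrating over $\Omega$ instead of $\R^d$ only decreases it. This gives
\[
\Vert g - I_{\delta,Z} g\Vert_{H^\tau(\Omega)} \le \Vert G - I_{\delta,Z}G\Vert_{H^\tau(\R^d)} .
\]
Applying the upper bound of \Cref{lem:rkhs_norms_different_scalings} to the right-hand side then gives the bound $C_\Phi^{1/2}\delta^{-\tau}\Vert G - I_{\delta,Z}G\Vert_{\Phi_\delta}$.

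Finally, by \Cref{lem:native_space_omega}(ii) in $\calh_{k_\delta}(\R^d)$, $I_{\delta,Z}$ is an orthogonal projection there, so $\Vert G - I_{\delta,Z}G\Vert_{\Phi_\delta} \le \Vert G\Vert_{\Phi_\delta} = \Vert Eg\Vert_{\Phi_\delta}$. Chaining the three inequalities yields the claim with exactly the constant $C_\Phi^{1/2}\delta^{-\tau}$.

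The only delicate point is the step comparing norms on $\Omega$ with norms on $\R^d$. The argument must not route through the extension operator constant $C_\tau$, which would spoil the clean constant; it has to use only that restriction does not increase the Sobolev norm. I expect this to be unproblematic with either standard definition of $H^\tau(\Omega)$. Every other step is a direct citation of an earlier result.
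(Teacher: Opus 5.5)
Your proof is correct and follows the paper's argument step for step: you extend by $G = Eg$, use $(G - I_{\delta,Z}G)|_\Omega = g - I_{\delta,Z}g$, bound the $H^\tau(\Omega)$ norm by the $H^\tau(\R^d)$ norm of this particular extension, apply \Cref{lem:rkhs_norms_different_scalings}, and finish with the projection property in $\calh_{k_\delta}(\R^d)$. One small caveat concerns your remark on the intrinsic Sobolev--Slobodeckij norm: for non-integer $\tau$ it is only equivalent to the Fourier-defined $H^\tau(\R^d)$ norm, not equal to it, so the clean constant is exact when $H^\tau(\Omega)$ carries the restriction (quotient) norm, or for integer $\tau$ with the standard norm (the paper makes this same assumption implicitly).
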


\begin{proof}
Let $G  \isdef Eg \in H^\tau(\R^d) = \calh_{k_\delta}(\R^d)$.
By \Cref{lem:native_space_omega}(ii), $(G - I_{\delta,Z} G)|_\Omega = g - I_{\delta,Z} g$, and since restriction to $\Omega$ does not increase the $H^\tau$ norm,
\begin{align*}
\Vert g - I_{\delta,Z} g \Vert_{H^\tau(\Omega)}
\leq \Vert G - I_{\delta,Z} G \Vert_{H^\tau(\R^d)}
\leq C_\Phi^{1/2} \delta^{-\tau} \Vert G - I_{\delta,Z} G \Vert_{\Phi_\delta}
\leq C_\Phi^{1/2} \delta^{-\tau} \Vert G \Vert_{\Phi_\delta} ,
\end{align*}
by \eqref{eq:rkhs_norms_different_scalings} and the projection property of \Cref{lem:native_space_omega}(ii) in $\calh_{k_\delta}(\R^d)$.
\end{proof}

\begin{lemma}[Multiscale contraction lemma]
\label{lem:contraction}
Let $g \in H^\tau(\Omega)$, $\delta \in (0,1]$, $\mu \in (0,1)$, $0 < \delta' \leq \mu \delta$ and let $Z \subset \Omega$ be finite. Suppose that
\begin{align}
\label{eq:contraction_hypothesis}
\Vert g - I_{\delta,Z} g \Vert_{L^2(\Omega)} \leq \beta \Vert E g \Vert_{\Phi_\delta}
\end{align}
for some $\beta \geq 0$.
Then
\begin{align}
\label{eq:contraction_conclusion}
\Vert E (g - I_{\delta,Z} g) \Vert_{\Phi_{\delta'}} \leq \widetilde{C} \left( \beta + \mu^\tau \right) \Vert E g \Vert_{\Phi_\delta},
\qquad
\widetilde{C}  \isdef \left( \frac{2^\tau}{c_\Phi} \right)^{1/2} \max\left\{ C_0,\ C_\tau C_\Phi^{1/2} \right\},
\end{align}
where $C_0$ and $C_\tau$ are the constants of \Cref{prop:universal_ext_op} for $\sigma = 0$ and $\sigma = \tau$.
\end{lemma}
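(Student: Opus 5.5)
Set $u \isdef g - I_{\delta,Z} g \in H^\tau(\Omega)$ and write $\|Eu\|_{\Phi_{\delta'}}^2$ as a Fourier integral with weight $1/\widehat{\Phi_{\delta'}}(\omega)$. By \eqref{eq:fourier_decay_scaled} this is at most $c_\Phi^{-1}(2\pi)^{-d/2}\int |\widehat{Eu}(\omega)|^2(1+\delta'^2\|\omega\|_2^2)^\tau\,\mathrm{d}\omega$. The elementary inequality $(1+a)^\tau \leq 2^\tau(1+a^\tau)$ for $a \geq 0$ then splits this into a low-frequency part and a high-frequency part:
\begin{align*}
\|Eu\|_{\Phi_{\delta'}}^2 \leq \frac{2^\tau}{c_\Phi}\Big( \|Eu\|_{L^2(\R^d)}^2 + \delta'^{2\tau}\,|Eu|_{H^\tau(\R^d)}^2 \Big)
\leq \frac{2^\tau}{c_\Phi}\Big( \|Eu\|_{L^2(\R^d)}^2 + \delta'^{2\tau}\,\|Eu\|_{H^\tau(\R^d)}^2 \Big).
\end{align*}
This step produces the prefactor $(2^\tau/c_\Phi)^{1/2}$ in $\widetilde{C}$. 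The essential feature of the splitting is that the $H^\tau$ seminorm comes with the factor $\delta'^{2\tau}$, which we will compare against the $\delta^{-\tau}$ loss of \Cref{lem:interp_error_Htau}.

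Next we bound each term using the universal extension operator of \Cref{prop:universal_ext_op}; the same operator $E$ works for $\sigma = 0$ and $\sigma = \tau$. For the $L^2$ part, \Cref{prop:universal_ext_op} with $\sigma=0$ and the hypothesis \eqref{eq:contraction_hypothesis} give $\|Eu\|_{L^2(\R^d)} \leq C_0\|u\|_{L^2(\Omega)} \leq C_0\beta\|Eg\|_{\Phi_\delta}$. For the $H^\tau$ part, \Cref{prop:universal_ext_op} with $\sigma=\tau$ and \Cref{lem:interp_error_Htau} give $\|Eu\|_{H^\tau(\R^d)} \leq C_\tau\|u\|_{H^\tau(\Omega)} \leq C_\tau C_\Phi^{1/2}\delta^{-\tau}\|Eg\|_{\Phi_\delta}$. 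Multiplying by $\delta'^\tau \leq \mu^\tau\delta^\tau$ cancels the $\delta^{-\tau}$, leaving $C_\tau C_\Phi^{1/2}\mu^\tau\|Eg\|_{\Phi_\delta}$.

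Finally we insert both bounds and take square roots, using $\sqrt{a^2+b^2}\leq a+b$. Bounding $C_0$ and $C_\tau C_\Phi^{1/2}$ by their maximum gives exactly $\widetilde{C}(\beta+\mu^\tau)\|Eg\|_{\Phi_\delta}$.

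The only delicate point is that $E$ is applied to $u$ itself, not to $Eg - I_{\delta,Z}Eg$. This is why we need the universality of $E$ across $\sigma$, and why the $H^\tau(\Omega)$ estimate of \Cref{lem:interp_error_Htau} is phrased in terms of $\|Eg\|_{\Phi_\delta}$. The main thing to check is the splitting inequality and its constant $2^\tau$. For $\tau \geq 1$ it follows from convexity, $(1+a)^\tau\leq 2^{\tau-1}(1+a^\tau)$. For $\tau<1$ it follows from $(1+a)^\tau\leq 1+a^\tau$. In both cases the bound $2^\tau$ suffices.
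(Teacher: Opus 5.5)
Your proposal is correct and follows the paper's proof essentially line by line. You use the same Fourier-side splitting via $(1+x)^\tau \le 2^\tau(1+x^\tau)$ into an $L^2$ part, bounded by $C_0$ and the hypothesis, and an $H^\tau$ part, bounded by $C_\tau$ and \Cref{lem:interp_error_Htau} with $\delta'^\tau \le \mu^\tau \delta^\tau$ cancelling $\delta^{-\tau}$, and you close with $\sqrt{a^2+b^2}\le a+b$; your explicit check of the elementary inequality in the cases $\tau \ge 1$ and $\tau < 1$ is a small detail the paper leaves implicit.
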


\begin{proof}
Set $r \isdef g - I_{\delta,Z} g$.
By \eqref{eq:fourier_decay_scaled} and $(1+x)^\tau \leq 2^\tau (1 + x^\tau)$ for $x \geq 0$, we have that
\begin{align*}
\Vert E r \Vert^2_{\Phi_{\delta'}}
\leq \frac{(2\pi)^{-d/2}}{c_\Phi} \int_{\R^d} | \widehat{E r}(\omega) |^2 \left( 1 + \delta'^2 \Vert \omega \Vert_2^2 \right)^\tau \mathrm{d}\omega
\leq \frac{2^\tau}{c_\Phi} \left( I_1 + I_2 \right),
\end{align*}
with
$I_1 \isdef (2\pi)^{-d/2} \int_{\R^d} | \widehat{E r}(\omega) |^2 \mathrm{d}\omega$ and
$I_2 \isdef (2\pi)^{-d/2} \delta'^{2\tau} \int_{\R^d} | \widehat{E r}(\omega) |^2 \Vert \omega \Vert_2^{2\tau} \mathrm{d}\omega$.
By Plancherel's theorem, \Cref{prop:universal_ext_op} with $\sigma = 0$ and the hypothesis \eqref{eq:contraction_hypothesis},
\begin{align*}
I_1 \leq \Vert E r \Vert_{L^2(\R^d)}^2 \leq C_0^2 \Vert r \Vert_{L^2(\Omega)}^2 \leq C_0^2 \beta^2 \Vert Eg \Vert_{\Phi_\delta}^2 .
\end{align*}
By \Cref{prop:universal_ext_op} with $\sigma = \tau$, \Cref{lem:interp_error_Htau} and $\delta' \leq \mu \delta$,
\begin{align*}
I_2 \leq \delta'^{2\tau} \Vert E r \Vert_{H^\tau(\R^d)}^2
\leq \delta'^{2\tau} C_\tau^2 \Vert r \Vert_{H^\tau(\Omega)}^2
\leq C_\tau^2 C_\Phi \left( \frac{\delta'}{\delta} \right)^{2\tau} \Vert E g \Vert_{\Phi_\delta}^2
\leq C_\tau^2 C_\Phi \mu^{2\tau} \Vert E g \Vert_{\Phi_\delta}^2 .
\end{align*}
Taking the square root and using $\sqrt{a^2 + b^2} \leq a + b$ gives \eqref{eq:contraction_conclusion}.
\end{proof}

The lemma says that a level contracts as soon as its residual is small in $L^2(\Omega)$ relative to the RKHS norm of its input. The factor $\mu^\tau$ is the price of the change of scale and is independent of how the level was built.
Iterating the lemma over the levels of \Cref{alg:standard_multiscale_algorithm} or \Cref{alg:multiscale_greedy} gives the following consequence, which we use repeatedly.

\begin{cor}
\label{cor:iteration}
Let $f \in H^\tau(\Omega)$, let $Z_1, Z_2, \ldots \subset \Omega$ be finite sets, and let $\delta_1 \in (0,1]$ and $\delta_{\ell+1} \leq \mu \delta_\ell$ for all $\ell$ with a fixed $\mu \in (0,1)$.
Consider the residuals
\begin{align*}
e_0 \isdef f, \qquad e_\ell \isdef e_{\ell-1} - I_{\delta_\ell, Z_\ell} e_{\ell-1} \qquad \text{for } \ell \geq 1,
\end{align*}
so that $f_n = f - e_n$ is the approximation after $n$ levels.
Suppose that every level reduces its input in $L^2(\Omega)$, i.e.\ there are $\beta_\ell \geq 0$ with
\begin{align}
\label{eq:iteration_hypothesis}
\Vert e_\ell \Vert_{L^2(\Omega)} \leq \beta_\ell \Vert E e_{\ell-1} \Vert_{\Phi_{\delta_\ell}} \qquad \text{for } \ell = 1, \dots, n .
\end{align}
Then, with $\alpha_\ell \isdef \widetilde{C} (\beta_\ell + \mu^\tau)$, the residuals contract from one scale to the next,
\begin{align}
\label{eq:iteration_contraction}
\Vert E e_\ell \Vert_{\Phi_{\delta_{\ell+1}}} \leq \alpha_\ell \Vert E e_{\ell-1} \Vert_{\Phi_{\delta_\ell}} \qquad \text{for } \ell = 1, \dots, n-1,
\end{align}
and consequently
\begin{align}
\label{eq:iteration_error}
\Vert f - f_n \Vert_{L^2(\Omega)} \leq c_\Phi^{-1/2} C_\tau \beta_n \prod_{\ell=1}^{n-1} \alpha_\ell \Vert f \Vert_{H^\tau(\Omega)} .
\end{align}
\end{cor}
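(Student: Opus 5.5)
The plan is to apply \Cref{lem:contraction} once per level and then convert the final RKHS bound into an $L^2$ bound using the hypothesis on level $n$.

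\textbf{Contraction step.} For $\ell = 1, \dots, n-1$ we take $g \isdef e_{\ell-1}$, $\delta \isdef \delta_\ell$, $\delta' \isdef \delta_{\ell+1} \leq \mu \delta_\ell$, $Z \isdef Z_\ell$ and $\beta \isdef \beta_\ell$ in \Cref{lem:contraction}. The definition of $e_\ell$ gives $g - I_{\delta_\ell,Z_\ell} g = e_\ell$, so \eqref{eq:iteration_hypothesis} is exactly \eqref{eq:contraction_hypothesis}, and \eqref{eq:contraction_conclusion} is \eqref{eq:iteration_contraction}.

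The lemma also needs $e_{\ell-1} \in H^\tau(\Omega)$, which we prove by induction. First, $e_0 = f \in H^\tau(\Omega)$. Second, if $e_{\ell-1} \in H^\tau(\Omega)$, then \Cref{lem:interp_error_Htau} shows $e_\ell = e_{\ell-1} - I_{\delta_\ell,Z_\ell} e_{\ell-1} \in H^\tau(\Omega)$ with finite norm. The extensions $Ee_\ell$ therefore lie in $H^\tau(\R^d) = \calh_{k_{\delta}}(\R^d)$ for every $\delta \in (0,1]$, and all quantities are well defined. We also need $\delta_\ell \le 1$ for all $\ell$, which follows from $\delta_1 \le 1$ and $\mu < 1$.

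\textbf{Final error.} Iterating \eqref{eq:iteration_contraction} gives
\begin{align*}
\Vert E e_{n-1} \Vert_{\Phi_{\delta_n}} \leq \prod_{\ell=1}^{n-1} \alpha_\ell \, \Vert E e_0 \Vert_{\Phi_{\delta_1}} .
\end{align*}
Since $f - f_n = e_n$, the hypothesis \eqref{eq:iteration_hypothesis} for $\ell = n$ bounds $\Vert f - f_n \Vert_{L^2(\Omega)} \le \beta_n \Vert E e_{n-1} \Vert_{\Phi_{\delta_n}}$.

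It remains to bound $\Vert Ef \Vert_{\Phi_{\delta_1}}$. By \Cref{lem:rkhs_norms_different_scalings} and \Cref{prop:universal_ext_op}, as in the middle inequalities of \eqref{eq:Htau_to_native},
\begin{align*}
\Vert Ef \Vert_{\Phi_{\delta_1}} \le c_\Phi^{-1/2} \Vert Ef \Vert_{H^\tau(\R^d)} \le c_\Phi^{-1/2} C_\tau \Vert f \Vert_{H^\tau(\Omega)} .
\end{align*}
This inequality is independent of $\delta_1$. Combining the three displays yields \eqref{eq:iteration_error}.

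\textbf{Where care is needed.} The computation itself is routine. The only point to watch is that the norms in the argument are of the extension $E e_\ell$ in $\calh_{k_\delta}(\R^d)$, not of the native-space norm on $\Omega$. This matches the formulation of \Cref{lem:contraction}, since the same fixed operator $E$ appears on both sides. The edge case $n=1$ is covered by the same argument with an empty product.
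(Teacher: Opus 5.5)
Your proposal is correct and matches the paper's proof step for step: apply \Cref{lem:contraction} levelwise with $g = e_{\ell-1}$, iterate down to $\ell = 1$, bound $\Vert Ef \Vert_{\Phi_{\delta_1}}$ via \eqref{eq:Htau_to_native}, and close with the hypothesis at $\ell = n$. The only cosmetic difference is that you justify $e_\ell \in H^\tau(\Omega)$ through \Cref{lem:interp_error_Htau}, whereas the paper simply notes that kernel interpolants lie in the native space, which equals $H^\tau(\Omega)$.
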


\begin{proof}
Every $e_\ell$ lies in $H^\tau(\Omega)$, since $f$ does and kernel interpolants lie in $\calh_{k_\delta}(\Omega) = H^\tau(\Omega)$.
Hence \eqref{eq:iteration_contraction} comes from \Cref{lem:contraction} applied with $g = e_{\ell-1}$, $\delta = \delta_\ell$, $\delta' = \delta_{\ell+1}$, $Z = Z_\ell$ and the $\beta$ of \eqref{eq:iteration_hypothesis}.
Iterating \eqref{eq:iteration_contraction} down to $\ell = 1$ and using $\Vert E e_0 \Vert_{\Phi_{\delta_1}} = \Vert E f \Vert_{\Phi_{\delta_1}} \leq c_\Phi^{-1/2} C_\tau \Vert f \Vert_{H^\tau(\Omega)}$ from \eqref{eq:Htau_to_native} gives
\begin{align*}
\Vert E e_{n-1} \Vert_{\Phi_{\delta_n}} \leq c_\Phi^{-1/2} C_\tau \prod_{\ell=1}^{n-1} \alpha_\ell \Vert f \Vert_{H^\tau(\Omega)} ,
\end{align*}
and \eqref{eq:iteration_hypothesis} for $\ell = n$ turns this into \eqref{eq:iteration_error}.
\end{proof}
\subsection{Error bounds for the classical multiscale method}
\label{subsec:error_bounds_standard}

As a first application we recover \Cref{th:conv_result_wendland} and extend it from $L^2(\Omega)$ to $L^p(\Omega)$, $2 \leq p \leq \infty$, in terms of the fill distance of the finest level.
This is a straightforward generalization of \cite[Theorem 1]{wendland2010multiscale} and \cite[Theorem 10]{wendland2017multiscale} and serves as the baseline against which the greedy variants have to be measured.

\begin{theorem}
\label{th:multiscale_using_fill_dist}
Let $X_1, X_2, \dots \subset \Omega$ have fill distances $h_j = h_{X_j, \Omega}$ with
\begin{align}
\label{eq:classical_fill_assumption}
c \mu h_j \leq h_{j+1} \leq \mu h_j \qquad \text{for } j = 1, 2, \dots
\end{align}
for fixed $\mu \in (0,1)$, $c \in (0,1]$, and $h_1 \leq \min\{h_0, \mu\}$, where $h_0$ is the threshold of \Cref{th:sampling_ineq}.
Let $\delta_j  \isdef \nu h_j$ with $1 \leq \nu \leq \mu^{-1}$, so that $\delta_j \leq 1$, let $f \in H^\tau(\Omega)$ and let $f_n$ be the output of \Cref{alg:standard_multiscale_algorithm}.
Then, with $C_2$ the constant of \Cref{th:sampling_ineq},
\begin{align}
\label{eq:classical_contraction}
\Vert E e_j \Vert_{\Phi_{j+1}} \leq \widetilde{C} \left( C_2 C_\Phi^{1/2} \nu^{-\tau} + \mu^\tau \right) \Vert E e_{j-1} \Vert_{\Phi_j} \qquad \text{for } j = 1, 2, \dots .
\end{align}
Moreover, for every $2 \leq p \leq \infty$ and every $\varepsilon \in (0,1)$ there is $\mu_0 \in (0,1)$ such that for $\nu = \mu^{-1}$ and $\mu \leq \mu_0$
\begin{align}
\label{eq:classical_rate}
\Vert f - f_n \Vert_{L^p(\Omega)} \leq C h_n^{(1-\varepsilon)(\tau - d(1/2-1/p))} \Vert f \Vert_{H^\tau(\Omega)} \qquad \text{for all } n \in \N,
\end{align}
with $C$ independent of $n$.
Only the upper bound in \eqref{eq:classical_fill_assumption} is needed for \eqref{eq:classical_contraction}.
\end{theorem}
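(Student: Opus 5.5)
We obtain \eqref{eq:classical_contraction} from \Cref{cor:iteration}, taking $Z_\ell = X_\ell$ and $\delta_\ell = \nu h_\ell$. The upper bound in \eqref{eq:classical_fill_assumption} gives $\delta_{\ell+1} \leq \mu \delta_\ell$. Since $h_1 \leq \mu$ and $\nu \leq \mu^{-1}$, we have $\delta_\ell \leq \nu h_1 \leq 1$. It remains to verify hypothesis \eqref{eq:iteration_hypothesis} with $\beta_\ell = C_2 C_\Phi^{1/2} \nu^{-\tau}$.

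Set $u \isdef e_{\ell-1} - I_{\delta_\ell, X_\ell} e_{\ell-1}$. It lies in $H^\tau(\Omega)$ and vanishes on $X_\ell$, and $h_\ell \leq h_1 \leq h_0$. The $L^2$ sampling inequality of \Cref{th:sampling_ineq} therefore gives $\Vert u \Vert_{L^2(\Omega)} \leq C_2 h_\ell^\tau \Vert u \Vert_{H^\tau(\Omega)}$. \Cref{lem:interp_error_Htau} bounds $\Vert u \Vert_{H^\tau(\Omega)}$ by $C_\Phi^{1/2} \delta_\ell^{-\tau} \Vert E e_{\ell-1} \Vert_{\Phi_{\delta_\ell}}$. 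Since $h_\ell^\tau \delta_\ell^{-\tau} = \nu^{-\tau}$, this is exactly the claimed $\beta_\ell$. Only the upper fill bound has been used, which confirms the final sentence of the statement.

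For the rate \eqref{eq:classical_rate}, take $\nu = \mu^{-1}$. Then $\alpha \isdef \widetilde C (C_2 C_\Phi^{1/2} + 1)\mu^\tau =: A \mu^\tau$. We do not go through \eqref{eq:iteration_error} in $L^2$; instead we bound the last level directly in $L^p$. The function $e_n$ vanishes on $X_n$, so \eqref{eq:sampling_Lp} together with \Cref{lem:interp_error_Htau} gives
\begin{align*}
\Vert f - f_n \Vert_{L^p(\Omega)} \leq C h_n^{\tau - d(1/2-1/p)} C_\Phi^{1/2} \delta_n^{-\tau} \Vert E e_{n-1} \Vert_{\Phi_{\delta_n}} = C' \mu^{\tau} h_n^{-d(1/2-1/p)} \Vert E e_{n-1} \Vert_{\Phi_{\delta_n}} .
\end{align*}
Iterating the contraction yields $\Vert E e_{n-1} \Vert_{\Phi_{\delta_n}} \leq c_\Phi^{-1/2} C_\tau (A\mu^\tau)^{n-1} \Vert f \Vert_{H^\tau(\Omega)}$.

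The main step is converting $(A\mu^\tau)^{n}$ into a power of $h_n$. This is where the lower fill bound enters: $h_n \geq (c\mu)^{n-1} h_1$, so $\mu^{n-1} \leq c^{-(n-1)} (h_n/h_1)$, and hence $\mu^{n-1}$ is bounded by $h_n$ times a geometric factor in $n$. More directly, write $A\mu^\tau = \mu^{\tau(1-\varepsilon')}$ with $\varepsilon' \isdef \log A / (\tau \log(1/\mu))$ (if $A \leq 1$ there is nothing to lose). Choosing $\mu_0$ small makes $\varepsilon'$ as small as we like, and the factor $c^{-1}$ can be absorbed the same way by shrinking $\mu$. This gives $(A\mu^\tau)^{n-1} \lesssim h_n^{\tau(1-\varepsilon'')}$ for some small $\varepsilon''$.

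Combining the two displays produces the exponent $\tau(1-\varepsilon'') - d(1/2-1/p)$. This is at least $(1-\varepsilon)(\tau - d(1/2-1/p))$ once $\varepsilon''\tau \leq \varepsilon(\tau - d(1/2-1/p))$. That is achievable because $\tau - d(1/2-1/p) \geq \tau - d/2 > 0$, which fixes $\mu_0$. The constants $h_1^{-\dots}$ and the remaining prefactors depend only on $h_1$ and $\mu$, not on $n$. The delicate part is this bookkeeping: $\varepsilon$ depends on $A$ and on $c$, both independent of $\mu$, which is why a single threshold $\mu_0$ suffices.
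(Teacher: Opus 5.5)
Your proposal is correct and follows the paper's proof: the contraction comes from the $L^2$ sampling inequality and \Cref{lem:interp_error_Htau} fed into \Cref{lem:contraction}, and the rate comes from the $L^p$ sampling bound on the last level combined with the iterated contraction. The only difference is the $\varepsilon$-bookkeeping: the paper first converts all of $\mu^{\tau(n-1)}$ into $(h_n/h_1)^{\tau}$ via the lower fill bound and then absorbs the leftover $(Ac^{-\tau})^{n-1}$ into $h_n^{\varepsilon(\tau-d(1/2-1/p))}$ using the upper bound $h_n\le\mu^{n-1}h_1$, whereas your version (stated precisely as $Ac^{-\tau}(c\mu)^{\tau}=(c\mu)^{\tau(1-\varepsilon'')}$ with $\varepsilon''\to0$ as $\mu\to0$) absorbs the constants into a small power of $c\mu$ before converting, and so needs only the lower bound, exactly as the paper does later in \Cref{th:stability_convergence}(iii).
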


\begin{proof}
Since $e_j$ vanishes on $X_j$ and $h_j \leq h_0$, \eqref{eq:sampling_Lp} with $p = 2$ and \Cref{lem:interp_error_Htau} give
\begin{align*}
\Vert e_j \Vert_{L^2(\Omega)} \leq C_2 h_j^\tau \Vert e_j \Vert_{H^\tau(\Omega)} \leq C_2 h_j^\tau C_\Phi^{1/2} \delta_j^{-\tau} \Vert E e_{j-1} \Vert_{\Phi_j} = C_2 C_\Phi^{1/2} \nu^{-\tau} \Vert E e_{j-1} \Vert_{\Phi_j} ,
\end{align*}
i.e.\ \eqref{eq:contraction_hypothesis} holds with $\beta = C_2 C_\Phi^{1/2} \nu^{-\tau}$ on every level, and since $\delta_{j+1} = \nu h_{j+1} \leq \mu \delta_j$, \eqref{eq:classical_contraction} is \Cref{lem:contraction}.

For the rate, \eqref{eq:sampling_Lp} and \Cref{lem:interp_error_Htau} give
\begin{align*}
\Vert e_n \Vert_{L^p(\Omega)} \leq C_2 h_n^{\tau - d(1/2 - 1/p)} \Vert e_n \Vert_{H^\tau(\Omega)}
\leq C_2 C_\Phi^{1/2} \nu^{-\tau} h_n^{-d(1/2-1/p)} \Vert E e_{n-1} \Vert_{\Phi_n} .
\end{align*}
With $\nu = \mu^{-1}$ the contraction factor in \eqref{eq:classical_contraction} is $\alpha = C_3 \mu^\tau$ with $C_3  \isdef \widetilde{C}(C_2 C_\Phi^{1/2} + 1)$, so \Cref{cor:iteration} yields
\begin{align*}
\Vert e_n \Vert_{L^p(\Omega)} \leq C_2 C_\Phi^{1/2} c_\Phi^{-1/2} C_\tau \mu^{\tau} h_n^{-d(1/2-1/p)} C_3^{n-1} \mu^{\tau(n-1)} \Vert f \Vert_{H^\tau(\Omega)} .
\end{align*}
The lower bound in \eqref{eq:classical_fill_assumption} gives $h_n \geq (c\mu)^{n-1} h_1$, i.e.\ $\mu^{\tau(n-1)} \leq c^{-\tau(n-1)} (h_n/h_1)^\tau$, and therefore
\begin{align*}
\Vert e_n \Vert_{L^p(\Omega)} \leq C_4 h_n^{\tau - d(1/2-1/p)} \left( C_3 c^{-\tau} \right)^{n-1} \Vert f \Vert_{H^\tau(\Omega)},
\qquad C_4  \isdef C_2 C_\Phi^{1/2} c_\Phi^{-1/2} C_\tau \mu^\tau h_1^{-\tau} .
\end{align*}
It remains to absorb the geometric factor.
Set $\gamma_p  \isdef \tau - d(1/2 - 1/p) \geq \tau - d/2 > 0$ and split $h_n^{\gamma_p} = h_n^{(1-\varepsilon)\gamma_p} h_n^{\varepsilon \gamma_p}$.
The upper bound in \eqref{eq:classical_fill_assumption} gives $h_n \leq \mu^{n-1} h_1$, hence
\begin{align*}
h_n^{\varepsilon \gamma_p} \left( C_3 c^{-\tau} \right)^{n-1} \leq h_1^{\varepsilon \gamma_p} \left( \mu^{\varepsilon \gamma_p} C_3 c^{-\tau} \right)^{n-1} \leq h_1^{\varepsilon \gamma_p}
\end{align*}
as soon as $\mu^{\varepsilon (\tau - d/2)} C_3 c^{-\tau} \leq 1$, i.e.\ for $\mu \leq \mu_0  \isdef (C_3 c^{-\tau})^{-1/(\varepsilon(\tau - d/2))}$, which is independent of $p$.
This proves \eqref{eq:classical_rate} with $C  \isdef C_4 h_1^{\varepsilon \gamma_p}$.
\end{proof}

For $p = 2$ the rate is $h_n^{(1-\varepsilon)\tau}$ and for $p = \infty$ it is $h_n^{(1-\varepsilon)(\tau - d/2)}$.
For quasi-uniform sets with $|X_n| \asymp h_n^{-d}$ these are the rates $|X_n|^{-(1-\varepsilon)\tau/d}$ and $|X_n|^{-(1-\varepsilon)(\tau/d - 1/2)}$ of single-scale interpolation, up to the loss $\varepsilon$: as anticipated, the multiscale construction does not change the approximation order of the classical method.
\subsection{\texorpdfstring{Error bounds for multiscale $f$-greedy}{Error bounds for multiscale f-greedy}}
\label{subsec:error_bounds_greedy_multiscale}

For adaptively distributed centres the fill distance is not the right quantity: there is no need to fill the whole domain.
We therefore state the convergence of the multiscale $f$-greedy algorithm in terms of the number of centres.
The per-level estimate is the greedy bound of \Cref{cor:f_greedy_rescaled_kernel}, and the mechanism is again \Cref{lem:contraction}.
We write $c_{\mathrm{g}}$ for the constant of \Cref{cor:f_greedy_rescaled_kernel}.

\begin{theorem}
\label{th:main_theorem}
Let $\Omega$ have a sufficiently smooth boundary in the sense of \Cref{prop:geom_mean_powerfunc_rescaled} and let $\Phi$ satisfy \ref{ass:integrable} and \ref{ass:fourier}.
Let $\mu \in (0,1)$, $c \in (0,1]$, and let $M_1, M_2, \ldots \in \N$ satisfy
\begin{align}
\label{eq:assumption_Mj}
\mu^{-d} M_\ell \leq M_{\ell+1} \leq (c\mu)^{-d} M_\ell \qquad \text{for } \ell = 1, 2, \dots .
\end{align}
Let $1 \leq \nu \leq M_1^{1/d}$ and $\delta_\ell  \isdef \nu M_\ell^{-1/d}$, so that $\delta_\ell \in (0,1]$ and $\delta_{\ell+1} \leq \mu \delta_\ell$.
Let $f \in H^\tau(\Omega)$ and let $f_n$ be the output of \Cref{alg:multiscale_greedy} with the $f$-greedy rule, $\beta = 1$ in \eqref{eq:beta_greedy}, where line~\ref{line:best_iterate} minimizes over $0 \leq i \leq M_\ell$ with $X_0 = \emptyset$ and $I_{\delta,\emptyset}  \isdef 0$.
Then
\begin{align}
\label{eq:theorem_first_eq}
\Vert E e_{\ell} \Vert_{\Phi_{\ell+1}} \leq \widetilde{C} \left( |\Omega|^{1/2} c_{\mathrm{g}} \nu^{-\tau} + \mu^\tau \right) \Vert E e_{\ell-1} \Vert_{\Phi_{\ell}} \qquad \text{for } \ell = 1, 2, \dots .
\end{align}
Moreover, for every $\varepsilon \in (0,1)$ there is $\mu_0 \in (0,1)$ such that for $\nu = \mu^{-1}$, which is admissible if $M_1 \geq \mu^{-d}$, and $\mu \leq \mu_0$,
\begin{align}
\label{eq:main_rate}
\Vert f - f_n \Vert_{L^\infty(\Omega)} \leq C M_n^{-(1-\varepsilon)\tau/d} \Vert f \Vert_{H^\tau(\Omega)} \qquad \text{for all } n \in \N,
\end{align}
with $C$ independent of $n$.
\end{theorem}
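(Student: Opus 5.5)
The proof follows the pattern of \Cref{th:multiscale_using_fill_dist}, with the sampling inequality replaced by the greedy bound of \Cref{cor:f_greedy_rescaled_kernel}.

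\emph{Per-level hypothesis.} For level $\ell$ we apply \Cref{cor:f_greedy_rescaled_kernel} with $\delta=\delta_\ell$, input $g=e_{\ell-1}\in H^\tau(\Omega)\subset\calh_{k_{\delta_\ell}}(\Omega)$ and $n=M_\ell$. This gives
\begin{align*}
\min_{0\le i\le M_\ell-1}\Vert e_{\ell-1}-I_{\delta_\ell,X_i}e_{\ell-1}\Vert_{L^\infty(\Omega)}\le c_{\mathrm g}\delta_\ell^{-\tau}M_\ell^{-\tau/d}\Vert e_{\ell-1}\Vert_{\calh_{k_{\delta_\ell}}(\Omega)}.
\end{align*}
Line~\ref{line:best_iterate} chooses the minimizing iterate over $0\le i\le M_\ell$, a range that contains $0\le i\le M_\ell-1$. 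Hence $\Vert e_\ell\Vert_{L^\infty(\Omega)}$ obeys the same bound. Since $\delta_\ell^{-\tau}M_\ell^{-\tau/d}=\nu^{-\tau}$, and by \Cref{lem:native_space_omega}(i) $\Vert e_{\ell-1}\Vert_{\calh_{k_{\delta_\ell}}(\Omega)}\le\Vert Ee_{\ell-1}\Vert_{\Phi_{\delta_\ell}}$, we get
\begin{align*}
\Vert e_\ell\Vert_{L^2(\Omega)}\le|\Omega|^{1/2}\Vert e_\ell\Vert_{L^\infty(\Omega)}\le|\Omega|^{1/2}c_{\mathrm g}\nu^{-\tau}\Vert Ee_{\ell-1}\Vert_{\Phi_{\delta_\ell}}.
\end{align*}
This is \eqref{eq:contraction_hypothesis} with $\beta=|\Omega|^{1/2}c_{\mathrm g}\nu^{-\tau}$.

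\emph{Scale conditions.} From $\delta_{\ell+1}/\delta_\ell=(M_\ell/M_{\ell+1})^{1/d}\le\mu$ and \Cref{lem:contraction}, we obtain \eqref{eq:theorem_first_eq}. We also need $\delta_\ell\le1$. This follows from $\nu\le M_1^{1/d}$ and the fact that $M_\ell$ is increasing.

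\emph{Rate.} We take $\nu=\mu^{-1}$. The contraction factor is then $\alpha=C_3\mu^\tau$ with $C_3\isdef\widetilde C(|\Omega|^{1/2}c_{\mathrm g}+1)$. For the last level we use the $L^\infty$ bound directly rather than the $L^2$ bound of \Cref{cor:iteration}. Combining it with the iterated contraction and \eqref{eq:Htau_to_native} gives
\begin{align*}
\Vert f-f_n\Vert_{L^\infty(\Omega)}\le c_{\mathrm g}\mu^\tau c_\Phi^{-1/2}C_\tau\,(C_3\mu^\tau)^{n-1}\Vert f\Vert_{H^\tau(\Omega)}.
\end{align*}

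\emph{Converting to $M_n$.} The upper bound in \eqref{eq:assumption_Mj} gives $M_n\le(c\mu)^{-d(n-1)}M_1$, so $\mu^{\tau(n-1)}\le c^{-\tau(n-1)}(M_1/M_n)^{\tau/d}$. The error is therefore at most
\begin{align*}
C_4M_n^{-\tau/d}(C_3c^{-\tau})^{n-1}\Vert f\Vert_{H^\tau(\Omega)}.
\end{align*}
It remains to absorb the geometric factor, exactly as in the classical proof. We split $M_n^{-\tau/d}=M_n^{-(1-\varepsilon)\tau/d}M_n^{-\varepsilon\tau/d}$. The lower bound in \eqref{eq:assumption_Mj} gives $M_n\ge\mu^{-d(n-1)}M_1$, hence
\begin{align*}
M_n^{-\varepsilon\tau/d}(C_3c^{-\tau})^{n-1}\le M_1^{-\varepsilon\tau/d}\,(\mu^{\varepsilon\tau}C_3c^{-\tau})^{n-1}\le M_1^{-\varepsilon\tau/d},
\end{align*}
where the last step holds once $\mu\le\mu_0\isdef(C_3c^{-\tau})^{-1/(\varepsilon\tau)}$. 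This proves \eqref{eq:main_rate}.

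\emph{Main obstacle.} The delicate points are bookkeeping rather than new estimates. First, the best-iterate index range must be matched to the form $\min_{0\le i\le n-1}$ in \Cref{cor:f_greedy_rescaled_kernel}; this is why $X_0=\emptyset$ is allowed. Second, the $\delta$-independence of $c_{\mathrm g}$ is essential. It keeps $\beta$ uniform across levels, since the factor $\delta_\ell^{-\tau}$ is exactly cancelled by $M_\ell^{-\tau/d}$ through the choice $\delta_\ell=\nu M_\ell^{-1/d}$.
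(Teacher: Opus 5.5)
Your proof is correct and follows the paper's argument essentially step by step: the bound of \Cref{cor:f_greedy_rescaled_kernel} serves as the level hypothesis of \Cref{lem:contraction}, the $L^\infty$ greedy bound on the last level is combined with the iterated contraction and \eqref{eq:Htau_to_native}, and the same $\varepsilon$-splitting with $\mu_0 = (C_3 c^{-\tau})^{-1/(\varepsilon\tau)}$ absorbs the geometric factor. The only deviation is that you apply the corollary with $n = M_\ell$ instead of the paper's $n = M_\ell + 1$; this is equally valid, since it gives exactly $c_{\mathrm g}\nu^{-\tau}$ and uses only the $M_\ell$ points actually selected on the level.
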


\begin{proof}
Fix a level $\ell$.
By \eqref{eq:Htau_to_native}, $e_{\ell-1} \in H^\tau(\Omega) \subset \calh_{k_{\delta_\ell}}(\Omega)$, 
so \Cref{cor:f_greedy_rescaled_kernel} applies to the $f$-greedy run of the level with $n = M_\ell + 1$ steps.
Since line~\ref{line:best_iterate} selects the iterate with the smallest residual among $X_0, \dots, X_{M_\ell}$,
\begin{align}
\label{eq:proof_greedy_bound}
\Vert e_\ell \Vert_{L^\infty(\Omega)}
&= \min_{0 \leq i \leq M_\ell} \Vert e_{\ell-1} - I_{\delta_\ell, X_i} e_{\ell-1} \Vert_{L^\infty(\Omega)} \notag \\
&\leq c_{\mathrm{g}} \delta_\ell^{-\tau} (M_\ell + 1)^{-\tau/d} \Vert e_{\ell-1} \Vert_{\calh_{k_{\delta_\ell}}(\Omega)}
\leq c_{\mathrm{g}} \nu^{-\tau} \Vert E e_{\ell-1} \Vert_{\Phi_\ell},
\end{align}
where we used $\delta_\ell^{-\tau} M_\ell^{-\tau/d} = \nu^{-\tau}$ and \Cref{lem:native_space_omega}(i).
Hence $\Vert e_\ell \Vert_{L^2(\Omega)} \leq |\Omega|^{1/2} \Vert e_\ell \Vert_{L^\infty(\Omega)} \leq |\Omega|^{1/2} c_{\mathrm{g}} \nu^{-\tau} \Vert E e_{\ell-1} \Vert_{\Phi_\ell}$, which is \eqref{eq:contraction_hypothesis} with $\beta = |\Omega|^{1/2} c_{\mathrm{g}} \nu^{-\tau}$.
Since $\delta_{\ell+1}/\delta_\ell = (M_\ell / M_{\ell+1})^{1/d} \leq \mu$ by \eqref{eq:assumption_Mj}, \Cref{lem:contraction} gives \eqref{eq:theorem_first_eq}.

For the rate let $\nu = \mu^{-1}$, so that the contraction factor is $\alpha = C_5 \mu^\tau$ with $C_5  \isdef \widetilde{C}(|\Omega|^{1/2} c_{\mathrm{g}} + 1)$.
By \eqref{eq:proof_greedy_bound} on level $n$ and \Cref{cor:iteration},
\begin{align*}
\Vert f - f_n \Vert_{L^\infty(\Omega)} = \Vert e_n \Vert_{L^\infty(\Omega)}
\leq c_{\mathrm{g}} \mu^{\tau} \Vert E e_{n-1} \Vert_{\Phi_n}
\leq c_{\mathrm{g}} \mu^{\tau} c_\Phi^{-1/2} C_\tau C_5^{n-1} \mu^{\tau(n-1)} \Vert f \Vert_{H^\tau(\Omega)} .
\end{align*}
The upper bound in \eqref{eq:assumption_Mj} gives $M_n \leq (c\mu)^{-d(n-1)} M_1$, i.e.\ $\mu^{\tau(n-1)} \leq c^{-\tau(n-1)} (M_1/M_n)^{\tau/d}$, and the lower bound gives $M_n \geq \mu^{-d(n-1)} M_1$, i.e.\ $M_n^{-\varepsilon\tau/d} \leq \mu^{\varepsilon\tau(n-1)} M_1^{-\varepsilon\tau/d}$.
Splitting $M_n^{-\tau/d} = M_n^{-(1-\varepsilon)\tau/d} M_n^{-\varepsilon\tau/d}$ we obtain
\begin{align*}
\Vert f - f_n \Vert_{L^\infty(\Omega)}
\leq c_{\mathrm{g}} \mu^{\tau} c_\Phi^{-1/2} C_\tau M_1^{(1-\varepsilon)\tau/d} M_n^{-(1-\varepsilon)\tau/d} \left( \mu^{\varepsilon \tau} C_5 c^{-\tau} \right)^{n-1} \Vert f \Vert_{H^\tau(\Omega)} ,
\end{align*}
and the last factor is at most one for $\mu \leq \mu_0  \isdef (C_5 c^{-\tau})^{-1/(\varepsilon\tau)}$.
This is \eqref{eq:main_rate} with $C  \isdef c_{\mathrm{g}} \mu^{\tau} c_\Phi^{-1/2} C_\tau M_1^{(1-\varepsilon)\tau/d}$.
\end{proof}

\begin{cor}
\label{cor:total_centres}
Under the assumptions of \Cref{th:main_theorem}, the total number of centres $N_n  \isdef \sum_{\ell=1}^n |Z_\ell|$ satisfies $N_n \leq M_n / (1 - \mu^d)$, and hence
\begin{align*}
\Vert f - f_n \Vert_{L^\infty(\Omega)} \leq C (1-\mu^d)^{-(1-\varepsilon)\tau/d} N_n^{-(1-\varepsilon)\tau/d} \Vert f \Vert_{H^\tau(\Omega)} .
\end{align*}
\end{cor}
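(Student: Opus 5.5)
The plan is to bound $N_n$ by a geometric series in $M_n$ and then substitute into \eqref{eq:main_rate}. First, each level set $Z_\ell$ is one of the greedy iterates $X_0 \subset \dots \subset X_{M_\ell}$ of that level, selected in line~\ref{line:best_iterate}. Hence $|Z_\ell| \leq M_\ell$. (If the minimiser is $X_0$, then $|Z_\ell| = 0$.)

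Next, the lower bound in \eqref{eq:assumption_Mj}, $M_{\ell+1} \geq \mu^{-d} M_\ell$, can be read backwards as $M_\ell \leq \mu^{d} M_{\ell+1}$. Iterating gives $M_\ell \leq \mu^{d(n-\ell)} M_n$ for $\ell \leq n$. Summing,
\begin{align*}
N_n \leq \sum_{\ell=1}^n M_\ell \leq M_n \sum_{k=0}^{n-1} \mu^{dk} \leq \frac{M_n}{1-\mu^d},
\end{align*}
since $\mu \in (0,1)$.

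For the error bound, invert the previous inequality as $M_n \geq (1-\mu^d) N_n$. The exponent $-(1-\varepsilon)\tau/d$ is negative, so this yields
\begin{align*}
M_n^{-(1-\varepsilon)\tau/d} \leq (1-\mu^d)^{-(1-\varepsilon)\tau/d} N_n^{-(1-\varepsilon)\tau/d}.
\end{align*}
Inserting this into \eqref{eq:main_rate} of \Cref{th:main_theorem}, with the same $\mu \leq \mu_0$, $\nu = \mu^{-1}$ and constant $C$, gives the claimed estimate.

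There is no real obstacle here. The only points needing care are that $|Z_\ell| \leq M_\ell$ follows from the best-iterate selection rather than holding with equality, and that the sign of the exponent makes the inversion go the right way. If $N_n = 0$, the right-hand side is read as $+\infty$, so the statement is trivial in that case.
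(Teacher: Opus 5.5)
Your proposal is correct and follows the paper's proof essentially verbatim: iterate the lower bound in \eqref{eq:assumption_Mj} to get $M_\ell \leq \mu^{d(n-\ell)} M_n$, sum the geometric series, and insert $M_n \geq (1-\mu^d) N_n$ into \eqref{eq:main_rate}. Your explicit checks, that $|Z_\ell| \leq M_\ell$ comes from the best-iterate selection and that the negative exponent makes the inversion go the right way, are details the paper leaves implicit.
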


\begin{proof}
By \eqref{eq:assumption_Mj}, $M_\ell \leq \mu^{d(n-\ell)} M_n$, so $N_n \leq \sum_{\ell=1}^n M_\ell \leq M_n \sum_{k \geq 0} \mu^{dk}$, and $M_n \geq (1-\mu^d) N_n$ in \eqref{eq:main_rate}.
\end{proof}

{By \eqref{eq:beta_greedy_error} with $\beta = 1$, the single-scale $f$-greedy algorithm converges in $L^\infty(\Omega)$ with the order $\tau/d$ in the number of centres, and by \Cref{cor:total_centres} the multiscale $f$-greedy algorithm converges with the same rate, up to the arbitrarily small loss $\varepsilon$.
Compared with the classical multiscale method of \Cref{th:multiscale_using_fill_dist} for $p = \infty$, this is a gain of the factor $N_n^{1/2}$, which is the known gain of target-dependent selection over quasi-uniform centres \cite{wenzel2023analysis}.
In $L^2(\Omega)$ there is no gain, since \Cref{th:multiscale_using_fill_dist} with $p = 2$ gives the same order $\tau/d$ for quasi-uniform levels.}

\subsection{Stability of the level matrices and the trade-off with convergence}
\label{subsec:stability}

The role of the multiscale structure becomes visible in the conditioning of the level matrices, for which we prove a two-sided estimate.
The upper bound controls the condition number of a kernel matrix with rescaled kernel by one dimensionless quantity, the ratio $\delta / q_X$ between the scale of the kernel and the separation distance of the centres.
It combines the lower bound on the smallest eigenvalue of \cite{narcowich1991norms}, in the form of \cite[Theorem~12.3]{wendland2005scattered}, with Gershgorin's theorem and the scaling \eqref{eq:rescaled_kernel}.
The lower bound shows that at a fixed scale the condition number grows with the number of centres, for any distribution of the centres.
It follows from the entropy number estimates behind the greedy convergence rates through the uncertainty relation of \cite{schaback1995error}.
Sharper stability estimates for specific kernels are available in \cite{diederichs2019improved}, and the relation between conditioning and the stability of the interpolation process is discussed in \cite{demarchi2010stability}.

\begin{prop}
\label{prop:stability}
Let $\Phi$ satisfy \ref{ass:integrable} and \ref{ass:fourier}, and assume in addition that $\Phi$ decays fast enough at infinity such that
\begin{align}
\label{eq:decay_space}
D_\Phi \isdef \sum_{j=0}^\infty (j+1)^d \sup_{\Vert x \Vert_2 \geq j} |\Phi(x)| < \infty ,
\end{align}
which holds if $|\Phi(x)| \leq C (1+\Vert x \Vert_2)^{-(d+1+\epsilon)}$ for some $\epsilon > 0$, in particular for the Mat\'ern kernels and for compactly supported kernels.
Let $\delta \in (0,1]$ and let $X \subset \R^d$ be finite with separation distance $q_X > 0$.
Then there are constants $c_1, c_2 > 0$, depending only on $\Phi$ and $d$, such that
\begin{align*}
\lambda_{\min}(\Kmat{\delta}{X}) \geq c_1 \delta^{-d} 
\min\bigg\{ 1, \frac{q_X}{\delta} \bigg\}^{2\tau - d}
\qquad \text{and} \qquad
\lambda_{\max}(\Kmat{\delta}{X}) \leq c_2 \delta^{-d} 
\max\bigg\{ 1, \frac{\delta}{q_X} \bigg\}^{d},
\end{align*}
and consequently
\begin{align}
\label{eq:stability}
\kappa_2(\Kmat{\delta}{X}) \leq C_\kappa \max\bigg\{ 1, \frac{\delta}{q_X} \bigg\}^{2\tau}, \qquad C_\kappa \isdef \frac{c_2}{c_1} .
\end{align}
\end{prop}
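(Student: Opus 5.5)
The plan is to reduce both eigenvalue bounds to the unscaled kernel $\Phi$ by a change of variables, and then to apply the classical estimates at scale one.
Since $k_\delta(x,z) = \delta^{-d}\Phi((x-z)/\delta)$, we have $\Kmat{\delta}{X} = \delta^{-d}\Kmat{1}{X/\delta}$, where $X/\delta \isdef \{x/\delta : x \in X\}$ has separation distance $q \isdef q_X/\delta$.
It therefore suffices to prove, for the kernel $\Phi$ and an arbitrary finite set $Y$ with separation $q$,
\begin{align*}
\lambda_{\min}(\Kmat{1}{Y}) \geq c_1 \min\{1,q\}^{2\tau-d}
\qquad\text{and}\qquad
\lambda_{\max}(\Kmat{1}{Y}) \leq c_2 \max\{1,q^{-1}\}^{d}.
\end{align*}
Multiplying both bounds by $\delta^{-d}$ gives the claimed estimates, and taking the quotient gives \eqref{eq:stability}, because $\max\{1,\delta/q_X\}^{2\tau-d}\max\{1,\delta/q_X\}^{d} = \max\{1,\delta/q_X\}^{2\tau}$.

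For the lower bound I would invoke \cite[Theorem~12.3]{wendland2005scattered}. It bounds $\lambda_{\min}$ from below by a constant times $\inf_{\Vert\omega\Vert_2\leq 2M} \widehat{\Phi}(\omega)$ with $M \asymp 1/q$; the normalization constants there are inessential. By \ref{ass:fourier}, this infimum is at least $c_\Phi(1+4M^2)^{-\tau} \gtrsim c_\Phi \min\{1,q\}^{2\tau}$. This bound carries the exponent $2\tau$, not $2\tau - d$. The missing factor $q^{-d}$ comes from the constant in front, which in \cite[Theorem~12.3]{wendland2005scattered} scales like $M^{d} \asymp q^{-d}$. For $q\leq 1$ this yields $q^{-d} q^{2\tau} = q^{2\tau-d}$. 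For $q > 1$, I would use monotonicity instead: removing points cannot decrease $\lambda_{\min}$, but a cleaner route is to apply the theorem with $M$ fixed at the value corresponding to $q=1$, which is legitimate because a set with separation $q\geq 1$ also has separation $\geq 1$. This gives a constant lower bound. Tracking this $q^{-d}$ prefactor correctly is the step I expect to need the most care, and I would check it against the explicit form of the constant in \cite{narcowich1991norms}.

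For the upper bound I would use Gershgorin. We have $\lambda_{\max} \leq \max_i \sum_j |\Phi(y_i - y_j)|$. Fix $y_i$ and group the other points into shells $j \leq \Vert y - y_i\Vert_2 < j+1$, for $j = 0,1,2,\ldots$. The balls of radius $q$ around the points are disjoint, so a packing argument bounds the number of points in the $j$-th shell by $C_d (j+1)^{d-1}\max\{1,q^{-1}\}^{d}$, which is at most $C_d (j+1)^{d}\max\{1,q^{-1}\}^{d}$. The points in that shell contribute at most $\sup_{\Vert x\Vert_2\geq j}|\Phi(x)|$ each. Summing over shells, and adding the diagonal term $\Phi(0) \leq \Vert\widehat\Phi\Vert_{L^1}$, which is finite by \ref{ass:fourier} since $\tau > d/2$, gives $\lambda_{\max} \leq C_d D_\Phi \max\{1,q^{-1}\}^{d}$ by \eqref{eq:decay_space}. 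Thus $c_2 = C_d D_\Phi$ up to the diagonal term.

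Finally, the stated sufficient condition for \eqref{eq:decay_space} follows from $\sum_j (j+1)^{d}(1+j)^{-(d+1+\epsilon)} < \infty$.
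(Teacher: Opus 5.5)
Your proposal is correct and follows the paper's proof step by step: the rescaling $\Kmat{\delta}{X} = \delta^{-d}\Kmat{1}{X/\delta}$, then \cite[Theorem~12.3]{wendland2005scattered} with $M \asymp 1/\min\{q,1\}$ so that the prefactor $M^d$ turns $q^{2\tau}$ into $q^{2\tau-d}$ (this is exactly the paper's choice $M = \max\{M_d,1\}/\min\{q_Y,1\}$), and then Gershgorin with a shell-wise volume count summed against $D_\Phi$. The only differences are cosmetic: you count points in annuli and treat the diagonal term separately, while the paper counts all points in $B(y_i,j+1)$, whose $j=0$ term already contains $y_i$ itself.
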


\begin{proof}
Let $Y \isdef \delta^{-1} X$.
By \eqref{eq:rescaled_kernel}, $\Kmat{\delta}{X} = \delta^{-d} \Kmat{1}{Y}$ and $q_Y = q_X / \delta$, so it suffices to show
\begin{align}
\label{eq:proof_stability_goal}
\lambda_{\min}(\Kmat{1}{Y}) \geq c_1 \min\{1, q_Y\}^{2\tau - d}
\qquad \text{and} \qquad
\lambda_{\max}(\Kmat{1}{Y}) \leq c_2 \max\{1, q_Y^{-1}\}^{d} .
\end{align}

\noindent \emph{Lower bound.}
By \ref{ass:integrable} and \ref{ass:fourier}, $\Phi \in L^1(\R^d) \cap C(\R^d)$ is even with positive Fourier transform, so \cite[Theorem~12.3]{wendland2005scattered} applies.
There are constants $C_d, M_d > 0$, depending only on $d$, such that
\begin{align*}
\lambda_{\min}(\Kmat{1}{Y}) \geq C_d \varphi_0(M) M^d
\end{align*}
for every $M \geq M_d / q_Y$ and with $\varphi_0(M) \isdef \inf_{\Vert \omega \Vert_2 \leq 2M} \widehat{\Phi}(\omega)$.
We choose $M \isdef \max\{M_d, 1\} / \min\{q_Y, 1\}$, which satisfies $M \geq M_d / q_Y$ and $M \geq 1$.
By \eqref{eq:fourier_decay}, $\varphi_0(M) \geq c_\Phi (1 + 4M^2)^{-\tau} \geq c_\Phi 5^{-\tau} M^{-2\tau}$, and since $d - 2\tau < 0$,
\begin{align*}
\lambda_{\min}(\Kmat{1}{Y}) \geq C_d c_\Phi 5^{-\tau} M^{d - 2\tau}
= C_d c_\Phi 5^{-\tau} \max\{M_d, 1\}^{d - 2\tau} \min\{q_Y, 1\}^{2\tau - d} ,
\end{align*}
which is the first bound in \eqref{eq:proof_stability_goal} with $c_1 \isdef C_d c_\Phi 5^{-\tau} \max\{M_d, 1\}^{d - 2\tau}$.

\noindent \emph{Upper bound.}
By Gershgorin's theorem, it holds that
$$
\lambda_{\max}(\Kmat{1}{Y}) \leq \max_{y_i \in Y} \sum_{y \in Y} |\Phi(y_i - y)|.
$$
Fix $y_i$ and let $n_j$ be the number of $y \in Y$ with $j \leq \Vert y - y_i \Vert_2 < j+1$, $j \in \N_0$.
The balls $B(y, q_Y)$, $y \in Y$, are pairwise disjoint, and those with $\Vert y - y_i \Vert_2 < j+1$ lie in $B(y_i, j+1+q_Y)$, so a comparison of volumes gives
\begin{align*}
n_j \leq \Big( 1 + \frac{j+1}{q_Y} \Big)^d \leq (j+2)^d \max\{1, q_Y^{-1}\}^d \leq 2^d (j+1)^d \max\{1, q_Y^{-1}\}^d .
\end{align*}
Hence, by \eqref{eq:decay_space},
\begin{align*}
\sum_{y \in Y} |\Phi(y_i - y)| \leq \sum_{j=0}^\infty n_j \sup_{\Vert x \Vert_2 \geq j} |\Phi(x)|
\leq 2^d D_\Phi \max\{1, q_Y^{-1}\}^d ,
\end{align*}
which is the second bound in \eqref{eq:proof_stability_goal} with $c_2 \isdef 2^d D_\Phi$.
Finally, $\min\{q_Y, 1\}^{-(2\tau - d)} = \max\{1, q_Y^{-1}\}^{2\tau - d}$, so the two bounds give
$$
\kappa_2(\Kmat{1}{Y}) \leq (c_2 / c_1) \max\{1, q_Y^{-1}\}^{2\tau},
$$ which is \eqref{eq:stability} with $q_Y = q_X / \delta$.
\end{proof}

The upper bound says that a level matrix is well conditioned as soon as its centres are separated at the scale of its kernel.
It leaves open whether a clever choice of centres could keep the condition number small at a fixed scale.
It cannot.
The entropy number estimates that give the greedy convergence rates also bound the condition number from below, for every set of centres, and the link is the uncertainty relation of Schaback \cite{schaback1995error}, see also \cite[Chapter~12]{wendland2005scattered}.

\begin{lemma}
\label{lem:uncertainty}
Let $X \subset \R^d$ be finite, $x \notin X$ and $\delta \in (0,1]$.
Then the power function of $X$ with respect to $k_\delta$ satisfies $P_{\delta,X}(x)^2 \geq \lambda_{\min}(\Kmat{\delta}{X \cup \{x\}})$.
\end{lemma}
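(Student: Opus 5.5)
The plan is to use the standard quadratic-form characterization of the squared power function. Write $Y \isdef X \cup \{x\}$, $n \isdef |X|$, and order the points so that $x$ is last. The power function is the norm of the error functional, and by \Cref{lem:native_space_omega}(iii) applied to the kernel $k_\delta$ we have
\begin{align*}
P_{\delta,X}(x)^2 = \min_{u \in \R^n} \Big( k_\delta(x,x) - 2 \sum_{j=1}^n u_j k_\delta(x,x_j) + \sum_{i,j=1}^n u_i u_j k_\delta(x_i,x_j) \Big).
\end{align*}
The minimum is attained at the Lagrange coefficients $u = \Kmat{\delta}{X}^{-1} k_\delta(X,x)$. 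Equivalently, $P_{\delta,X}(x)^2 = k_\delta(x,x) - k_\delta(X,x)^\top \Kmat{\delta}{X}^{-1} k_\delta(X,x)$.

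The key step is to observe that the expression being minimized is exactly $\alpha^\top \Kmat{\delta}{Y} \alpha$ with $\alpha \isdef (-u_1,\dots,-u_n,1)^\top \in \R^{n+1}$. Hence
\begin{align*}
P_{\delta,X}(x)^2 = \min_{\alpha \in \R^{n+1},\ \alpha_{n+1}=1} \alpha^\top \Kmat{\delta}{Y}\alpha \geq \min_{\alpha_{n+1}=1} \lambda_{\min}(\Kmat{\delta}{Y}) \Vert \alpha \Vert_2^2 \geq \lambda_{\min}(\Kmat{\delta}{Y}),
\end{align*}
where the first inequality is the Rayleigh quotient bound and the second uses $\Vert \alpha \Vert_2 \geq |\alpha_{n+1}| = 1$.

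An alternative route uses the Schur complement: $P_{\delta,X}(x)^2$ is the Schur complement of $\Kmat{\delta}{X}$ in $\Kmat{\delta}{Y}$, so its reciprocal is the last diagonal entry of $\Kmat{\delta}{Y}^{-1}$. That entry is at most $\lambda_{\max}(\Kmat{\delta}{Y}^{-1}) = 1/\lambda_{\min}(\Kmat{\delta}{Y})$.

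There is essentially no obstacle here. The only care needed is that $\Kmat{\delta}{Y}$ is positive definite, since the points are distinct and $k_\delta$ is strictly positive definite by \ref{ass:fourier}, so that the inverse and the Schur complement are well defined.
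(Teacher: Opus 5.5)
Your proposal is correct. Your second argument is exactly the paper's proof: $P_{\delta,X}(x)^2$ is the Schur complement of $\Kmat{\delta}{X}$ in $\Kmat{\delta}{Y}$, its reciprocal is the last diagonal entry of $\Kmat{\delta}{Y}^{-1}$, and that entry is at most $\lambda_{\max}(\Kmat{\delta}{Y}^{-1}) = 1/\lambda_{\min}(\Kmat{\delta}{Y})$.

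Your main argument is the classical quadratic-form argument behind Schaback's uncertainty relation. The paper bounds $e_{n+1}^\top \Kmat{\delta}{Y}^{-1} e_{n+1}$ from above by the Rayleigh quotient of the inverse. You instead bound $\alpha^\top \Kmat{\delta}{Y} \alpha$ from below on the affine hyperplane $\alpha_{n+1} = 1$ by the Rayleigh quotient of $\Kmat{\delta}{Y}$ itself. The two are dual, since $\min_{\alpha_{n+1}=1} \alpha^\top \Kmat{\delta}{Y}\alpha = 1/(\Kmat{\delta}{Y}^{-1})_{n+1,n+1}$.

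What each route buys:
\begin{itemize}
\item Yours never inverts $\Kmat{\delta}{Y}$ and works directly from the variational characterization of the power function. It also gives the slightly sharper bound $P_{\delta,X}(x)^2 \geq \lambda_{\min}(\Kmat{\delta}{Y})(1 + \Vert u^* \Vert_2^2)$, where $u^* = \Kmat{\delta}{X}^{-1} k_\delta(X,x)$.
\item The paper's is a two-line linear-algebra identity. It matches the incremental Cholesky update used in the implementation, where the new diagonal entry $c$ is exactly the power function.
\end{itemize}

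One cosmetic point about your citation. \Cref{lem:native_space_omega}(iii) alone only gives the norm of an interpolant. The closed form $P_{\delta,X}(x)^2 = k_\delta(x,x) - k_\delta(X,x)^\top \Kmat{\delta}{X}^{-1} k_\delta(X,x)$ follows from (iii) together with the projection property (ii), applied to $k_\delta(\cdot,x)$. Since $X \subset \R^d$, this should be done in $\calh_{k_\delta}(\R^d)$ rather than on $\Omega$.
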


\begin{proof}
Let $A \isdef \Kmat{\delta}{X \cup \{x\}}$ with $x$ as the last point.
The Schur complement of $\Kmat{\delta}{X}$ in $A$ is
\begin{align*}
k_\delta(x,x) - k_\delta(x,X) \Kmat{\delta}{X}^{-1} k_\delta(X,x) = P_{\delta,X}(x)^2 ,
\end{align*}
and its reciprocal is the last diagonal entry of $A^{-1}$, which is at most $\lambda_{\max}(A^{-1}) = 1 / \lambda_{\min}(A)$.
\end{proof}

\begin{theorem}
\label{th:tradeoff}
Let $\Omega$ have a sufficiently smooth boundary in the sense of \Cref{prop:geom_mean_powerfunc_rescaled}, let $\Phi$ satisfy \ref{ass:integrable} and \ref{ass:fourier}, and let $\delta \in (0,1]$.
Then every set $X_n \subset \Omega$ of $n$ distinct points satisfies
\begin{align}
\label{eq:lambda_min_upper}
\lambda_{\min}(\Kmat{\delta}{X_n}) \leq c_{\mathrm{g}}^2 \delta^{-2\tau} n^{1 - 2\tau/d}
\qquad \text{and} \qquad
\kappa_2(\Kmat{\delta}{X_n}) \geq c_0 (\delta n^{1/d})^{2\tau},
\end{align}
where $c_{\mathrm{g}}$ is the constant of \Cref{prop:geom_mean_powerfunc_rescaled} and $c_0 > 0$ depends only on $\Phi$, $\Omega$ and $d$.
\end{theorem}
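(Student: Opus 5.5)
Order the points of $X_n$ arbitrarily as $x_1,\dots,x_n$, with $X_i = \{x_1,\dots,x_i\}$. The key observation is that the smallest eigenvalue of $\Kmat{\delta}{X_n}$ bounds from below every squared power function value $P_{\delta,i}(x_{i+1})^2$, while \Cref{prop:geom_mean_powerfunc_rescaled} bounds their geometric mean from above. To link the two, we use eigenvalue interlacing (Cauchy): $\Kmat{\delta}{X_{i+1}}$ is a principal submatrix of $\Kmat{\delta}{X_n}$, so $\lambda_{\min}(\Kmat{\delta}{X_{i+1}}) \geq \lambda_{\min}(\Kmat{\delta}{X_n})$. Combined with \Cref{lem:uncertainty}, this gives $P_{\delta,i}(x_{i+1})^2 \geq \lambda_{\min}(\Kmat{\delta}{X_n})$ for every $0 \leq i \leq n-1$. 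For $i=0$ the power function is $\Phi_\delta(0)^{1/2}$, and the inequality is just the Rayleigh bound on the diagonal entry.

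Taking the geometric mean over $i$ and invoking \Cref{prop:geom_mean_powerfunc_rescaled} then yields
\begin{align*}
\lambda_{\min}(\Kmat{\delta}{X_n})^{1/2} \leq \bigg( \prod_{i=0}^{n-1} P_{\delta,i}(x_{i+1}) \bigg)^{1/n} \leq c_{\mathrm{g}} \delta^{-\tau} n^{-\tau/d+1/2},
\end{align*}
and squaring gives the first estimate in \eqref{eq:lambda_min_upper}. This step is essentially bookkeeping. The only point to check is that the constant of \Cref{prop:geom_mean_powerfunc_rescaled} is the one named $c_{\mathrm{g}}$, which the paper identifies with the constant of \Cref{cor:f_greedy_rescaled_kernel}.

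For the condition number we need a lower bound on $\lambda_{\max}$ of order $\delta^{-d}$. Since the largest eigenvalue dominates every diagonal entry, $\lambda_{\max}(\Kmat{\delta}{X_n}) \geq k_\delta(x_1,x_1) = \delta^{-d}\Phi(0)$. Here $\Phi(0) = (2\pi)^{-d/2}\int \widehat\Phi > 0$ by \ref{ass:fourier}, and this integral is finite because $2\tau > d$. Dividing,
\begin{align*}
\kappa_2(\Kmat{\delta}{X_n}) \geq \frac{\Phi(0)\,\delta^{-d}}{c_{\mathrm{g}}^2 \delta^{-2\tau} n^{1-2\tau/d}} = \frac{\Phi(0)}{c_{\mathrm{g}}^2}\, \delta^{2\tau-d} n^{2\tau/d - 1}.
\end{align*}

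The main obstacle is reconciling this with the claimed form $c_0(\delta n^{1/d})^{2\tau} = c_0\delta^{2\tau} n^{2\tau/d}$: the bound just derived is smaller by a factor $(\delta^{d}n)^{-1}$. To close the gap, I would try to improve the lower bound on $\lambda_{\max}$ to order $n$, using $\lambda_{\max} \geq \mathbf 1^\top \Kmat{\delta}{X_n}\mathbf 1/n$. By the Fourier identity in the proof of \Cref{prop:eigenvalue_equivalence}, and since $\widehat\Phi_\delta(\omega) \geq c_\Phi 2^{-\tau}$ for $\|\omega\|_2 \leq 1/\delta$, we have
\begin{align*}
\mathbf 1^\top \Kmat{\delta}{X_n}\mathbf 1 \geq c \int_{\|\omega\|_2 \leq 1} \Big|\sum_j e^{i\omega^\top x_j}\Big|^2 \,\mathrm d\omega \gtrsim n^2 .
\end{align*}
The last step holds because for $\|\omega\|_2 \leq 1/(2\operatorname{diam}\Omega)$, after factoring out a common phase, all the exponentials have real part at least $\cos(1/2)$, so the sum has modulus of order $n$. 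This gives $\lambda_{\max} \gtrsim n$ with a constant depending only on $\Phi$, $\Omega$ and $d$. Combining it with the $\lambda_{\min}$ bound yields $\kappa_2 \gtrsim \delta^{2\tau} n^{2\tau/d}$, which is the claimed form with $c_0$ depending only on $\Phi$, $\Omega$ and $d$. One should also take the maximum with the diagonal bound above, so that the estimate covers both regimes.
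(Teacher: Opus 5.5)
Your proof is correct. The bound on $\lambda_{\min}$ is the paper's argument: \Cref{prop:geom_mean_powerfunc_rescaled}, \Cref{lem:uncertainty} and Cauchy interlacing. The paper picks an index $i^*$ whose power function value is at most the geometric mean, while you bound every factor from below by $\lambda_{\min}(\Kmat{\delta}{X_n})$ before taking the geometric mean; these are the same argument.

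You genuinely differ in the lower bound of order $n$ on $\lambda_{\max}(\Kmat{\delta}{X_n})$. The paper works in physical space. It chooses $r_\Phi$ with $\Phi \geq \Phi(0)/2$ on $\overline{B(0,r_\Phi)}$ and covers $\Omega$ by at most $C_{\mathrm{c}}\delta^{-d}$ balls of radius $r_\Phi\delta/2$. By pigeonhole one ball contains $m \geq n\delta^d/C_{\mathrm{c}}$ centres, whose mutual kernel entries are all at least $\delta^{-d}\Phi(0)/2$. Testing with the indicator vector of this cluster gives $\lambda_{\max} \geq \Phi(0)n/(2C_{\mathrm{c}})$.

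You instead test with the all-ones vector in the Fourier representation. This uses only the lower bound of \ref{ass:fourier} at low frequencies and the coherence of $\sum_j e^{i\omega^\top x_j}$ for $\Vert\omega\Vert_2 \leq 1/(2\diam\Omega)$. Your route avoids the covering. It also shows that the order-$n$ eigenvalue comes from $\widehat{\Phi_\delta}(\omega) = \widehat{\Phi}(\delta\omega)$ being bounded below near the origin, uniformly in $\delta$. The paper's route needs no Fourier analysis, only continuity of $\Phi$ and $\Phi(0)>0$. It exhibits the geometric mechanism: about $n\delta^d$ centres lie within one kernel width.

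Two minor points. First, integrate over the ball of radius $\min\{1, 1/(2\diam\Omega)\}$, so that both $\Vert\omega\Vert_2 \leq 1/\delta$ and the phase condition hold. Second, the closing maximum with the diagonal bound is superfluous. Your bound $\lambda_{\max}(\Kmat{\delta}{X_n}) \geq c\,n$ holds for every $n$ and every $\delta \in (0,1]$, and by itself gives the claim.
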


\begin{proof}
Order the points of $X_n$ arbitrarily as $x_1, \dots, x_n$ and let $X_i \isdef \{x_1, \dots, x_i\}$.
\Cref{prop:geom_mean_powerfunc_rescaled} holds for every sequence of points, and the minimum of $n$ numbers is at most their geometric mean, so there is an index $0 \leq i^* \leq n-1$ with
\begin{align*}
P_{\delta, i^*}(x_{i^*+1}) \leq c_{\mathrm{g}} \delta^{-\tau} n^{-\tau/d + 1/2} .
\end{align*}
By \Cref{lem:uncertainty}, $\lambda_{\min}(\Kmat{\delta}{X_{i^*+1}}) \leq P_{\delta,i^*}(x_{i^*+1})^2$, and since $\Kmat{\delta}{X_{i^*+1}}$ is a principal submatrix of $\Kmat{\delta}{X_n}$, Cauchy's interlacing theorem gives $\lambda_{\min}(\Kmat{\delta}{X_n}) \leq \lambda_{\min}(\Kmat{\delta}{X_{i^*+1}})$.
This proves the first inequality.

For the second we need a lower bound on $\lambda_{\max}(\Kmat{\delta}{X_n})$ that grows with $n$.
Since $\Phi$ is continuous and $\Phi(0) > 0$ by \ref{ass:fourier}, there is $r_\Phi > 0$ with $\Phi \geq \Phi(0)/2$ on $\overline{B(0, r_\Phi)}$.
We cover $\Omega$ by at most $C_{\mathrm{c}} \delta^{-d}$ balls of radius $r_\Phi \delta / 2$, with $C_{\mathrm{c}} \isdef (2 \sqrt{d} \diam \Omega / r_\Phi + 1)^d$.
One of them contains $m \geq n \delta^d / C_{\mathrm{c}}$ points of $X_n$, any two of which are at distance at most $r_\Phi \delta$, so that every entry of $\Kmat{\delta}{X_n}$ between them is at least $\delta^{-d} \Phi(0) / 2$.
For the indicator vector $v \in \{0,1\}^n$ of these points,
\begin{align*}
\lambda_{\max}(\Kmat{\delta}{X_n}) \geq \frac{v^\top \Kmat{\delta}{X_n} v}{v^\top v} \geq \frac{m^2 \delta^{-d} \Phi(0) / 2}{m} \geq \frac{\Phi(0)}{2 C_{\mathrm{c}}} n .
\end{align*}
Dividing by the first inequality gives the second with $c_0 \isdef \Phi(0) / (2 C_{\mathrm{c}} c_{\mathrm{g}}^2)$.
\end{proof}
Together, \Cref{prop:stability} and \Cref{th:tradeoff} read
\begin{align}
\label{eq:two_sided}
c_0 \chi^{2\tau} \leq \kappa_2(\Kmat{\delta}{X_n}) \leq C_\kappa \max\bigg\{1, \frac{\delta}{q_{X_n}}\bigg\}^{2\tau},
\qquad \chi \isdef \delta n^{1/d} .
\end{align}
The lower bound depends on the centres only through their number, the upper bound only through their separation, and the two agree up to constants for quasi-uniform sets, where $q_{X_n} \asymp n^{-1/d}$ and hence $\delta / q_{X_n} \asymp \chi$.

The same number governs the accuracy of a level, since the bound of \Cref{cor:f_greedy_rescaled_kernel} is $c_{\mathrm{g}} \delta^{-\tau} n^{-\tau/d} = c_{\mathrm{g}} \chi^{-\tau}$.
For the $f$-greedy algorithm with $n$ centres at the scale $\delta$ we therefore have
\begin{align}
\label{eq:tradeoff}
\min_{0 \leq i \leq n-1} \Vert f - I_{\delta, X_i} f \Vert_{L^\infty(\Omega)} \leq c_{\mathrm{g}} \chi^{-\tau} \Vert f \Vert_{\calh_{k_\delta}(\Omega)}
\qquad \text{and} \qquad
\kappa_2(\Kmat{\delta}{X_n}) \geq c_0 \chi^{2\tau} .
\end{align}
Eliminating $\chi$ gives $c_{\mathrm{g}} \chi^{-\tau} \geq c_{\mathrm{g}} ( c_0 / \kappa_2(\Kmat{\delta}{X_n}) )^{1/2}$, meaning that a guaranteed error $\epsilon$ requires $\kappa_2(\Kmat{\delta}{X_n}) \geq c_0 (c_{\mathrm g} / \epsilon)^2$.
The guaranteed error reduction of a kernel matrix with condition number $\kappa$ is never better than the factor $c_{\mathrm{g}} (c_0 / \kappa)^{1/2}$, whatever $\tau$, $d$ and the position of the centres: every digit of guaranteed accuracy costs two digits of conditioning.
This concerns the guarantee, not the error itself, and it is the trade-off that the multiscale construction resolves, see \Cref{rem:stability_convergence_cases}.

\subsection{A general stability-convergence principle}
\label{subsec:stability_convergence}

We return to the setting of \Cref{cor:iteration}: $f \in H^\tau(\Omega)$, finite sets $Z_\ell \subset \Omega$ with $M_\ell \isdef |Z_\ell|$, scales $\delta_1 \in (0,1]$ and $\delta_{\ell+1} \leq \mu \delta_\ell$ for a fixed $\mu \in (0,1)$, residuals $e_0 = f$, $e_\ell = e_{\ell-1} - I_{\delta_\ell, Z_\ell} e_{\ell-1}$, and approximants $f_n = f - e_n$.
Nothing is assumed on how the $Z_\ell$ are produced.
A level enters through its reductions $\beta_\ell, \gamma_\ell \geq 0$ in $L^2(\Omega)$ and in $L^\infty(\Omega)$, as follows
\begin{align}
\label{eq:level_reduction}
\Vert e_\ell \Vert_{L^2(\Omega)} \leq \beta_\ell \Vert E e_{\ell-1} \Vert_{\Phi_{\delta_\ell}}
\qquad \text{and} \qquad
\Vert e_\ell \Vert_{L^\infty(\Omega)} \leq \gamma_\ell \Vert E e_{\ell-1} \Vert_{\Phi_{\delta_\ell}} .
\end{align}
The first is the hypothesis of \Cref{lem:contraction} and drives the contraction across the levels.
In \Cref{th:multiscale_using_fill_dist} it came from the sampling inequality and in \Cref{th:main_theorem} from the greedy bound.
The second is only needed on the last level, to measure the final error in $L^\infty(\Omega)$.
The following theorem collects what the contraction lemma and the stability estimates say when the level constants are uniform.

\begin{theorem}
\label{th:stability_convergence}
Let $\Phi$ satisfy \ref{ass:integrable}--\ref{ass:fourier} and let $\Omega$ have a sufficiently smooth boundary in the sense of \Cref{prop:geom_mean_powerfunc_rescaled}.
\begin{enumerate}[label=(\roman*)]
\item \emph{(convergence)}
If $\beta_\ell \leq \bar\beta$ for all $\ell$, then with $\alpha \isdef \widetilde{C} (\bar\beta + \mu^\tau)$ it holds
\begin{align}
\label{eq:general_convergence}
\Vert f - f_n \Vert_{L^2(\Omega)} \leq c_\Phi^{-1/2} C_\tau \bar\beta \alpha^{n-1} \Vert f \Vert_{H^\tau(\Omega)}
\end{align}
for all $n \in \N$.
The same bound holds in $L^\infty(\Omega)$ with $\bar\beta$ replaced by $\gamma_n$, and $f_n \to f$ linearly in $L^2(\Omega)$ if $\alpha < 1$.
\item \emph{(stability)}
Suppose also \eqref{eq:decay_space} holds. Then, every level matrix satisfies
\begin{align}
\label{eq:general_conditioning}
c_0 \chi_\ell^{2\tau} \leq \kappa_2(\Kmat{\delta_\ell}{Z_\ell}) \leq C_\kappa \max\bigg\{ 1, \frac{\delta_\ell}{q_{Z_\ell}} \bigg\}^{2\tau},
\qquad \chi_\ell \isdef \delta_\ell M_\ell^{1/d} .
\end{align}
In particular, if $q_{Z_\ell} \geq c_q \delta_\ell$ for all $\ell$ with a fixed $c_q > 0$, then $\kappa_2(\Kmat{\delta_\ell}{Z_\ell}) \leq C_\kappa \max\{1, c_q^{-1}\}^{2\tau}$ uniformly in $\ell$.
\item \emph{(rate)}
If $\beta_\ell \leq C_\beta \mu^\tau$ for all $\ell$ and $\mu^{-d} M_\ell \leq M_{\ell+1} \leq (c\mu)^{-d} M_\ell$ with $c \in (0,1]$, then for every $\varepsilon \in (0,1)$ there is $\mu_0 \in (0,1)$ such that for $\mu \leq \mu_0$
\begin{align}
\label{eq:general_rate}
\Vert f - f_n \Vert_{L^2(\Omega)} \leq C_\beta C M_n^{-(1-\varepsilon)\tau/d} \Vert f \Vert_{H^\tau(\Omega)}
\end{align}
 for all  $n \in \N$ and with $C \isdef c_\Phi^{-1/2} C_\tau \mu^\tau M_1^{(1-\varepsilon)\tau/d}$. 
Moreover, the same bound holds in $L^\infty(\Omega)$ with $C_\beta$ replaced by $C_\gamma$ if $\gamma_n \leq C_\gamma \mu^\tau$ for all $n$.
\end{enumerate}
\end{theorem}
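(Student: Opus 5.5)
The three parts are essentially assembled from earlier results, and I will prove them in order.

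For (i), I apply \Cref{cor:iteration} with $\beta_\ell \leq \bar\beta$. Then $\alpha_\ell = \widetilde{C}(\beta_\ell + \mu^\tau) \leq \alpha$, and \eqref{eq:iteration_error} gives \eqref{eq:general_convergence} directly. For the $L^\infty$ version I reuse the intermediate bound from the proof of \Cref{cor:iteration},
\begin{align*}
\Vert E e_{n-1} \Vert_{\Phi_{\delta_n}} \leq c_\Phi^{-1/2} C_\tau \alpha^{n-1} \Vert f \Vert_{H^\tau(\Omega)},
\end{align*}
and combine it with the second inequality in \eqref{eq:level_reduction} for $\ell = n$. Linear convergence for $\alpha<1$ is then immediate.

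For (ii), I apply \Cref{th:tradeoff} and \Cref{prop:stability} to each level with $X = Z_\ell$, $\delta = \delta_\ell$ and $n = M_\ell$. This is exactly \eqref{eq:two_sided} levelwise. \Cref{th:tradeoff} needs $Z_\ell\subset\Omega$ and the boundary assumption; \Cref{prop:stability} needs \eqref{eq:decay_space}. For the uniform statement, note that $q_{Z_\ell}\geq c_q\delta_\ell$ gives $\delta_\ell/q_{Z_\ell}\leq c_q^{-1}$, so $\max\{1,\delta_\ell/q_{Z_\ell}\}\leq\max\{1,c_q^{-1}\}$.

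For (iii), I copy the absorption argument from the proof of \Cref{th:main_theorem}.
\begin{enumerate}[label=(\alph*)]
\item With $\bar\beta = C_\beta\mu^\tau$, the factor in (i) is $\alpha = \widetilde{C}(C_\beta+1)\mu^\tau \isdef C_6\mu^\tau$. Hence
\begin{align*}
\Vert f-f_n\Vert_{L^2(\Omega)} \leq c_\Phi^{-1/2}C_\tau C_\beta \mu^\tau C_6^{n-1}\mu^{\tau(n-1)}\Vert f\Vert_{H^\tau(\Omega)}.
\end{align*}
\item The upper growth bound $M_n\leq(c\mu)^{-d(n-1)}M_1$ gives $\mu^{\tau(n-1)}\leq c^{-\tau(n-1)}(M_1/M_n)^{\tau/d}$.
\item Split $M_n^{-\tau/d}=M_n^{-(1-\varepsilon)\tau/d}M_n^{-\varepsilon\tau/d}$. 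The lower growth bound gives $M_n^{-\varepsilon\tau/d}\leq \mu^{\varepsilon\tau(n-1)}M_1^{-\varepsilon\tau/d}$.
\item Collecting terms leaves $M_1^{(1-\varepsilon)\tau/d}M_n^{-(1-\varepsilon)\tau/d}(\mu^{\varepsilon\tau}C_6c^{-\tau})^{n-1}$.
\item The last factor is $\leq1$ for $\mu\leq\mu_0\isdef(C_6c^{-\tau})^{-1/(\varepsilon\tau)}$, which yields the constant $C$ as stated.
\end{enumerate}
The $L^\infty$ case is identical, with $C_\gamma\mu^\tau$ replacing $\bar\beta$ in the final factor while $\alpha$ is unchanged.

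The only real point of care is bookkeeping. The contraction factor depends on $C_\beta$, so $\mu_0$ depends on $C_\beta$, $c$ and $\varepsilon$. One must also check that $C$ is independent of $n$ and that $\mu_0$ is chosen before $n$. These are the places where the argument could go wrong; I expect no deeper obstacle.
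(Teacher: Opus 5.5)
Your proposal is correct and follows the paper's proof: (i) and (ii) are the same direct appeals to \Cref{cor:iteration}, \Cref{th:tradeoff} and \Cref{prop:stability}, and (iii) is the same absorption of $C_6^{n-1}$ into an $\varepsilon$-portion of $\mu^{\tau(n-1)}$. The only difference is in the bookkeeping of (iii): you reuse the route of \Cref{th:main_theorem} through both growth bounds, whereas the paper splits $\mu^{\tau(n-1)} = \mu^{\varepsilon\tau(n-1)}\mu^{(1-\varepsilon)\tau(n-1)}$ first and uses only the upper bound $M_{\ell+1} \leq (c\mu)^{-d} M_\ell$, which puts the factor $c^{-(1-\varepsilon)\tau}$ instead of $c^{-\tau}$ into $\mu_0$.
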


\begin{proof}
\emph{(i).}
The result comes from \Cref{cor:iteration} with $\alpha_\ell = \widetilde{C}(\beta_\ell + \mu^\tau) \leq \alpha$, and thus $\prod_{\ell=1}^{n-1} \alpha_\ell \leq \alpha^{n-1} $.

\emph{(ii).}
Every $\delta_\ell \leq \mu^{\ell-1} \delta_1 \leq 1$, so \Cref{th:tradeoff} applies to $X_n = Z_\ell$ with $n = M_\ell$ and $\delta = \delta_\ell$ and gives the lower bound $c_0 (\delta_\ell M_\ell^{1/d})^{2\tau} = c_0 \chi_\ell^{2\tau}$, and \Cref{prop:stability} applies to $X = Z_\ell$ and $\delta = \delta_\ell$ and gives the upper bound.
If $q_{Z_\ell} \geq c_q \delta_\ell$, then $\delta_\ell / q_{Z_\ell} \leq c_q^{-1}$ and the upper bound is at most $C_\kappa \max\{1, c_q^{-1}\}^{2\tau}$.

\emph{(iii).}
We apply (i) with $\bar\beta = C_\beta \mu^\tau$.
Then $\alpha = \widetilde{C}(C_\beta + 1) \mu^\tau = C_* \mu^\tau$ with $C_* \isdef \widetilde{C}(C_\beta + 1)$, and \eqref{eq:general_convergence} reads
\begin{align}
\label{eq:proof_general_start}
\Vert f - f_n \Vert_{L^2(\Omega)} \leq c_\Phi^{-1/2} C_\tau C_\beta \mu^\tau C_*^{n-1} \mu^{\tau(n-1)} \Vert f \Vert_{H^\tau(\Omega)} .
\end{align}
Iterating $M_{\ell+1} \leq (c\mu)^{-d} M_\ell$ gives $M_n \leq (c\mu)^{-d(n-1)} M_1$, that is $\mu^{n-1} \leq c^{-(n-1)} (M_1 / M_n)^{1/d}$.
We use this on the $(1-\varepsilon)$-part of $\mu^{\tau(n-1)}$ only and keep the $\varepsilon$-part to absorb $C_*^{n-1}$,
\begin{align*}
C_*^{n-1} \mu^{\tau(n-1)}
= \big( C_* \mu^{\varepsilon\tau} \big)^{n-1} \mu^{(1-\varepsilon)\tau(n-1)}
\leq \big( C_* c^{-(1-\varepsilon)\tau} \mu^{\varepsilon\tau} \big)^{n-1} M_1^{(1-\varepsilon)\tau/d} M_n^{-(1-\varepsilon)\tau/d} .
\end{align*}
For $\mu \leq \mu_0 \isdef \min\{1/2, (C_* c^{-(1-\varepsilon)\tau})^{-1/(\varepsilon\tau)}\}$ the first factor is at most one, and \eqref{eq:proof_general_start} becomes \eqref{eq:general_rate}.
The $L^\infty(\Omega)$ version of (i) replaces the factor $\bar\beta = C_\beta \mu^\tau$ in \eqref{eq:proof_general_start} by $\gamma_n \leq C_\gamma \mu^\tau$ and leaves $\alpha$, hence $C_*$ and $\mu_0$, unchanged, so the same computation gives \eqref{eq:general_rate} with $C_\beta$ replaced by $C_\gamma$.
\end{proof}

\begin{rem}
\label{rem:stability_convergence_cases}
\Cref{th:stability_convergence} reads a level through three quantities: $\beta_\ell$ drives the contraction, $\chi_\ell = \delta_\ell M_\ell^{1/d}$ the lower bound on the condition number, and $\delta_\ell / q_{Z_\ell}$ the upper bound. We can analyze different scenarios based on these quantities.
\begin{enumerate}[label=(\alph*)]
\item \emph{Single scale.}
The scale is fixed and centres are added, so $\chi = \delta n^{1/d} \to \infty$: the condition number grows at least like $c_0 \delta^{2\tau} n^{2\tau/d}$, whatever the centres, and the $f$-greedy bound $c_{\mathrm g} \chi^{-\tau}$ decays for the same reason, see \eqref{eq:tradeoff}.
\item \emph{Classical multiscale.}
For quasi-uniform levels and $\delta_\ell = \nu h_{Z_\ell,\Omega}$, both $\chi_\ell$ and $\delta_\ell / q_{Z_\ell}$ are bounded by multiples of $\nu$, so both sides of \eqref{eq:general_conditioning} are constants.
The sampling inequality gives $\beta_\ell \leq C_2 C_\Phi^{1/2} \nu^{-\tau}$ and $\gamma_n \leq C_2 C_\Phi^{1/2} \nu^{-\tau} h_n^{-d/2}$, and \Cref{th:stability_convergence}(iii) recovers the orders $\tau/d$ in $L^2(\Omega)$ and $\tau/d - 1/2$ in $L^\infty(\Omega)$ of \Cref{th:multiscale_using_fill_dist}.
\item \emph{Multiscale $f$-greedy.}
With $\delta_\ell = \nu M_\ell^{-1/d}$ we have $\chi_\ell = \nu$ on every level, so the lower bound in \eqref{eq:general_conditioning} is the constant $c_0 \nu^{2\tau}$.
\Cref{cor:f_greedy_rescaled_kernel} gives $\gamma_\ell \leq c_{\mathrm g} \nu^{-\tau}$ and $\beta_\ell \leq |\Omega|^{1/2} c_{\mathrm g} \nu^{-\tau}$: the contraction is that of (b), and in $L^\infty(\Omega)$ the last level carries no factor $M_n^{1/2}$, which is the gain of \Cref{th:main_theorem}.
The upper bound is not controlled, since the $f$-greedy rule may cluster centres and $\kappa_2(\Kmat{\delta_\ell}{Z_\ell})$ grows with $\delta_\ell / q_{Z_\ell}$. By \Cref{th:stability_convergence}(ii), rejecting candidates closer than $2 c_q \delta_\ell$ to the current centres prevents this, which is the conditioning-controlled variant of \Cref{subsec:experimental_setting}.
\end{enumerate}
\end{rem}

\section{Numerical results}
\label{sec:numerical_results}

This section is divided into two main parts.
The first one tests the multiscale $f$-greedy algorithm, that is \Cref{alg:multiscale_greedy} with the $f$-greedy rule, on a composite function on the unit square and on the Stanford Bunny, with particular attention to the distribution of the selected centres across the levels and to the conditioning of the level systems.
The second one compares it with the single-scale greedy algorithms, the multiscale $P$-greedy algorithm and the classical multiscale method on a function with a kink.
The comparison also includes a conditioning-controlled (CC) variant, which keeps the centres of every level apart at the scale of the kernel by an exclusion radius and decides the size of every level by a certificate on the residual.
In this way it bounds the condition number of every level by construction and satisfies the two conditions of \Cref{th:stability_convergence}, see \Cref{subsec:experimental_setting}.

For the conditioning, the condition number of every level matrix is plotted together with the two bounds of \eqref{eq:general_conditioning}, the lower one in the number of centres per kernel width and the upper one in the ratio between the scale and the separation of the centres.
For the convergence, the error is plotted against the number of centres together with the reference rates, $n^{-\tau/d}$ for the $f$-greedy rule and $n^{-\tau/d+1/2}$ for quasi-uniform centres in $L^\infty(\Omega)$, and $n^{-\tau/d}$ for both in $L^2(\Omega)$.
For the cost, the measured times are set against the counts of \Cref{tab:cost}.
We also note that we omit a comparison with the classical multiscale method using globally supported kernels and samplet compression, cited in \Cref{tab:cost}, since the kernel matrices are too small for compression to provide any meaningful benefit in this setting.

All computations were performed on a MacBook Pro with an Apple M2 Max processor and 32\,GB of RAM.
The implementation is based on the \texttt{C++} library \texttt{FMCA} \cite{fmca}.

\subsection{Experimental setting}
\label{subsec:experimental_setting}

All experiments share the following setting.

\paragraph{Data.}
The only input is a finite set $X \subset \Omega$ of $N$ points, the \emph{candidates}, from which the greedy algorithms select their centres and on which all errors are measured.
On the unit square, $X$ consists of $N = 100\,000$ independent uniformly distributed points.
On the Bunny, $X$ is the point cloud itself, with $N = 300\,000$ points.

\paragraph{Level hierarchy.}
The point sets of the levels are produced by the greedy algorithm itself and are nested, $X_1 \subset X_2 \subset \dots \subset X_L \subset X$.
Level $\ell$ starts from the centres of level $\ell - 1$, whose residual it interpolates at the new scale, and keeps selecting centres until
\begin{equation}\label{eq:level_cardinality_num}
    n_\ell = 2^d n_{\ell-1},
    \qquad \text{that is} \qquad
    n_\ell = 2^{d(\ell-1)} n_1,
\end{equation}
which mimics the halving of the fill distance of a quasi-uniform set in dimension $d$.
The number of centres of the multiscale approximant is $n_L$, the size of the finest level.
This choice slightly differs from \Cref{alg:multiscale_greedy}, which returns the iterate with the smallest error.
In practice, however, the last iterate typically attains the smallest error, so we use it for simplicity.

\paragraph{Conditioning-controlled (CC) variant.}
By \eqref{eq:general_conditioning}, the condition number of a level is bounded in terms of the ratio $\delta_\ell / q_{X_\ell}$.
In the comparison of \Cref{subsec:comparison} we therefore include a variant of the multiscale greedy algorithms that bounds this ratio by construction.
On level $\ell$ the next centre is selected by the greedy rule only among the candidates farther than $\rho \delta_\ell$ from the current centres, for a fixed $\rho \in (0, 1]$.
Any two centres of the level are then at least $\rho \delta_\ell$ apart, that is
\begin{equation}\label{eq:stable_ratio}
    \frac{\delta_\ell}{q_{X_\ell}} \leq \frac{2}{\rho}
    \qquad \text{and} \qquad
    \kappa_2 \leq C_\kappa \bigg( \frac{2}{\rho} \bigg)^{2\tau}
\end{equation}
on every level, whatever the target.
This is similar in spirit to the $\gamma$-stabilized greedy algorithms of \cite{wenzel2021novel}, which restrict the selection by a criterion on the power function.
The CC variant instead controls the ratio in \eqref{eq:stable_ratio} directly.
Moreover, the size of a level is not prescribed.
The level is closed as soon as the residual $r_\ell$ left by its interpolant $s_\ell$ satisfies
\begin{align}\label{eq:stable_certificate}
    \| r_\ell \|_{L^2(\Omega)} \leq \theta \| s_\ell \|_{\Phi_{\delta_\ell}},
\end{align}
with a fixed $\theta \in (0, 1)$, or when no admissible candidate is left.
{In practice, $\Omega$ is replaced by the candidates $X$, and the $L^2(\Omega)$ norm in \eqref{eq:stable_certificate} is approximated by the corresponding discrete norm over $X$.}
The scale is then halved and the next level starts from the centres selected so far.
We use $\rho = 1/4$ and $\theta = \mu^\tau$, and the CC variant runs until it has selected as many centres as the other methods.

\paragraph{Kernel and rescaling.}
All greedy methods use the rescaled exponential kernel, that is Mat\'ern-$1/2$,
\begin{align*}
    \Phi_\delta(r) = \delta^{-d} \exp\bigg(-\frac{r}{\delta}\bigg),
\end{align*}
whose low smoothness suits the non-smooth targets considered here.
It satisfies \eqref{eq:fourier_decay} with $\tau = 3/2$ in $d = 2$, in accordance with \Cref{sec:analysis}.
On each level the kernel is rescaled according to
\begin{equation}\label{eq:delta_ell}
    \delta_\ell = \delta_1 2^{-(\ell-1)} = \nu n_\ell^{-1/d} D,
    \qquad
    \delta_1 = \nu n_1^{-1/d} D,
    \qquad
    D \isdef |\Omega|^{1/d}.
\end{equation}
This is the scaling $\delta_\ell \approx \nu h_{X_\ell}$ of the quasi-uniform case, since $n_\ell$ quasi-uniform points have fill distance $h_{X_\ell} \approx (|\Omega| / n_\ell)^{1/d} = D n_\ell^{-1/d}$.
We set $\nu = 2$ on the unit square and $\nu = 1$ on the Bunny surface.

\paragraph{Error measures.}
All errors are measured on $X$.
For an approximant $s$ of the target $f$ we report
\begin{equation*}
    \varepsilon_2 = \| f - s \|_2 / \| f \|_2,
    \qquad
    \varepsilon_\infty = \max_{\bs y \in X} |f(\bs y) - s(\bs y)|,
\end{equation*}
with the Euclidean norm over $X$, together with the spectral condition number $\kappa_2$ of the kernel matrix of the current level, which is the only matrix a multiscale method ever factorizes.

\paragraph{Implementation.}
Since the greedy selection keeps the number of centres moderate, all linear systems of the greedy methods are solved by dense Cholesky factorization.
The factorization is updated incrementally.
When the centre $\bs x_{n+1}$ is added, the kernel matrix and its Cholesky factor grow as
\begin{equation*}
    K_{n+1} = \begin{bmatrix} K_n & \bs k \\ \bs k^\top & \Phi_\delta(0) \end{bmatrix},
    \qquad
    L_{n+1} = \begin{bmatrix} L_n & \bs 0 \\ \bs l^\top & c \end{bmatrix},
\end{equation*}
where $K_n \in \mathbb{R}^{n \times n}$ is the kernel matrix on the current centres $X_n = \{\bs x_1, \dots, \bs x_n\}$ and $\bs k = \big(\Phi_\delta(\|\bs x_1 - \bs x_{n+1}\|), \dots, \Phi_\delta(\|\bs x_n - \bs x_{n+1}\|)\big)^\top$ collects the kernel values between the current centres and the new one.
From $K_{n+1} = L_{n+1} L_{n+1}^\top$ one reads off
\[
    L_n \bs l = \bs k,
    \qquad
    c = \sqrt{\Phi_\delta(0) - \| \bs l \|_2^2}.
\]
Note that $c = P_{X_n}(\bs x_{n+1})$, so the update also yields the power function at the new centre.
By \Cref{lem:uncertainty} its square bounds the smallest eigenvalue of $K_{n+1}$ from above, so the factorization monitors its own conditioning.
The classical multiscale method of \Cref{subsec:comparison} works with sparse matrices and the conjugate gradient method instead, as described there.

\subsection{\texorpdfstring{Multiscale $f$-greedy approximation on the unit square}{Multiscale f-greedy approximation on the unit square}}
\label{subsec:2d_examples}

We start with $n_1 = 5$ centres on the coarsest level and $L = 6$ levels.
\Cref{tab:levels_2d} reports the number of centres of each level and the corresponding shape parameters, following \eqref{eq:level_cardinality_num} and \eqref{eq:delta_ell}.
The same schedule serves the comparison of \Cref{subsec:comparison}.

\begin{table}[htb]
\centering
\renewcommand{\arraystretch}{1.25}
\begin{tabular}{l c c c c c c}
\hline
$\ell$ & 1 & 2 & 3 & 4 & 5 & 6 \\
\hline
$n_\ell$      & $5$ & $20$ & $80$ & $320$ & $1\,280$ & $5\,120$ \\
$\delta_\ell$ & $0.8944$ & $0.4472$ & $0.2236$ & $0.1118$ & $0.0559$ & $0.0280$ \\
\hline
\end{tabular}
\caption{Level cardinalities and kernel shape parameters of the experiments on the unit square, with $n_1 = 5$ and $\nu = 2$.}
\label{tab:levels_2d}
\end{table}

The target function has been used in \cite{li2026local,avesani2025sobolev} to test smoothness class detection and combines features of varying size, shape and regularity.
It is designed to show how the adaptive selection distributes the centres among these features.
The jump has been smoothed so that the function is continuous, and the interpolation is performed with the exponential kernel.
The function is shown in \Cref{fig:composite_function}.

\begin{figure}[htb]
  \centering
  \begin{tikzpicture}
    \node[anchor=south west, inner sep=0] (img) at (0,0)
      {\includegraphics[width=0.36\textwidth]{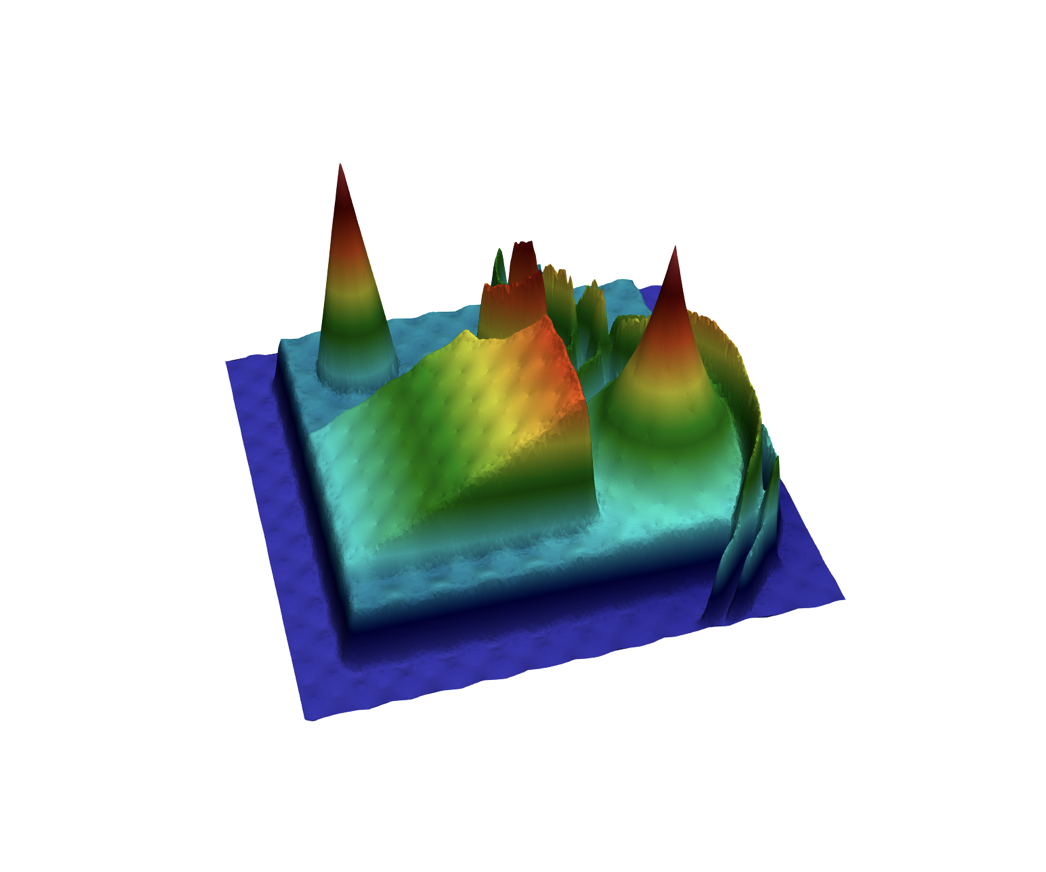}};
    \begin{scope}[
        x={(img.south east)},
        y={(img.north west)},
        font=\small,
        every path/.style={-{Stealth[length=5pt]}, thick},
        every node/.style={fill=white, fill opacity=0.75,
                           text opacity=1, inner sep=2pt,
                           rounded corners=2pt}
    ]
      \node (cone) at (0.05, 0.70) {cone peak};
      \draw (cone) -- (0.31, 0.70);
      \node (ridge) at (0.72, 0.85) {oscillatory wedge};
      \draw (ridge) -- (0.53, 0.72);
      \node (cusp) at (0.75, 0.17) {exponential cusp};
      \draw (cusp) -- (0.65, 0.55);
      \node (wedge) at (0.98, 0.65) {curved ridge};
      \draw (wedge) -- (0.73, 0.55);
      \node (grad) at (0.00, 0.35) {linear gradient};
      \draw (grad) -- (0.40, 0.45);
      \node (block) at (0.28, 0.95) {constant block};
      \draw (block) -- (0.42, 0.63);
        \node (smooth) at (0.10, 0.10) {smooth transition};
      \draw (smooth) -- (0.315, 0.28);
    \end{scope}
  \end{tikzpicture}
  \caption{Composite target function with localized features of varying size, shape and regularity.}\label{fig:composite_function}
\end{figure}

\Cref{fig:composite_points_levels} shows the centres selected by the multiscale $f$-greedy method at levels $\ell = 2, 4$ and $6$, and \Cref{fig:composite_sol} the corresponding interpolants.
The errors and condition numbers are reported in \Cref{tab:error_composite}.
The error decreases more rapidly after level $4$.
Up to level $4$ one has $\delta_\ell \geq 0.1118$, which is larger than all the localized features of \Cref{fig:composite_function}.
From level $5$ on, $\delta_\ell$ becomes comparable to the size of these features, so that the adaptive selection can resolve them.
The condition numbers grow by two orders of magnitude over the six levels, which is consistent with the reduced smoothness of the exponential kernel and the exponent $2\tau$ in \Cref{prop:stability}.

\begin{figure}[!htbp]
\centering
    \includegraphics[width=0.26\linewidth]{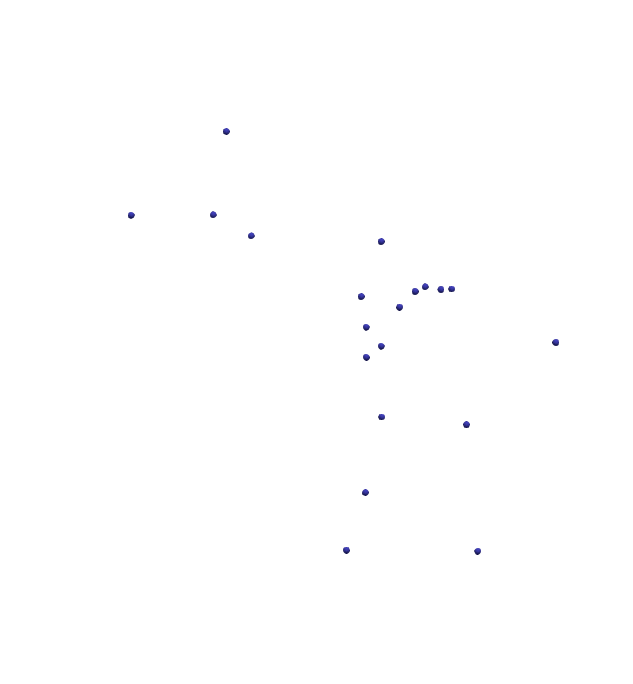}
    \includegraphics[width=0.26\linewidth]{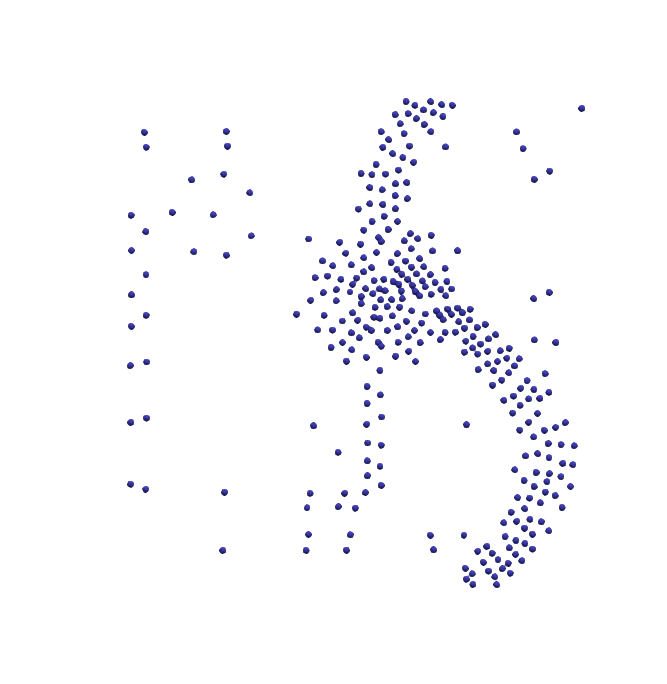}
    \includegraphics[width=0.374\linewidth]{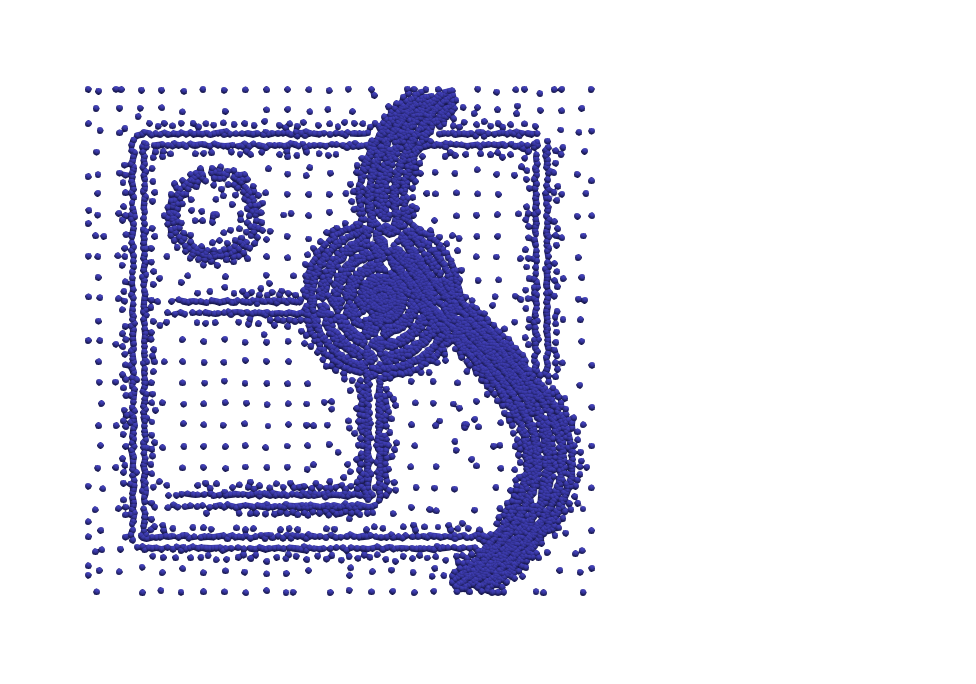}
    \caption{Centres selected by multiscale $f$-greedy for the target function
    in \Cref{fig:composite_function} for levels $\ell = 2, 4, 6$.}
    \label{fig:composite_points_levels}
\end{figure}

\begin{figure}[!htbp]
\centering
    \includegraphics[width=0.26\linewidth]{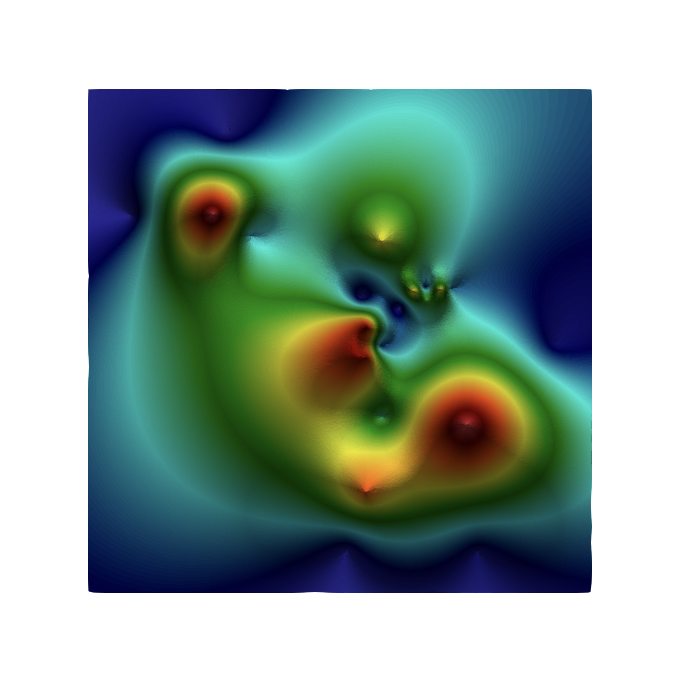}
    \includegraphics[width=0.25\linewidth]{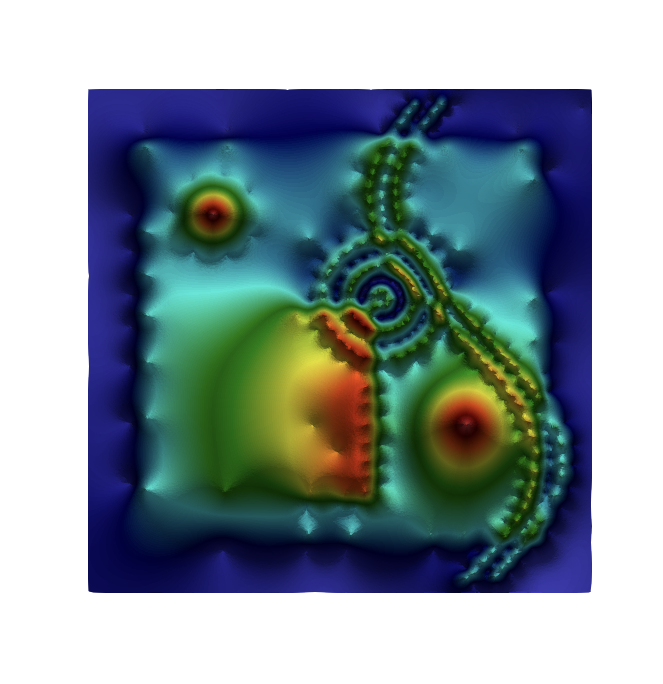}
    \includegraphics[width=0.374\linewidth]{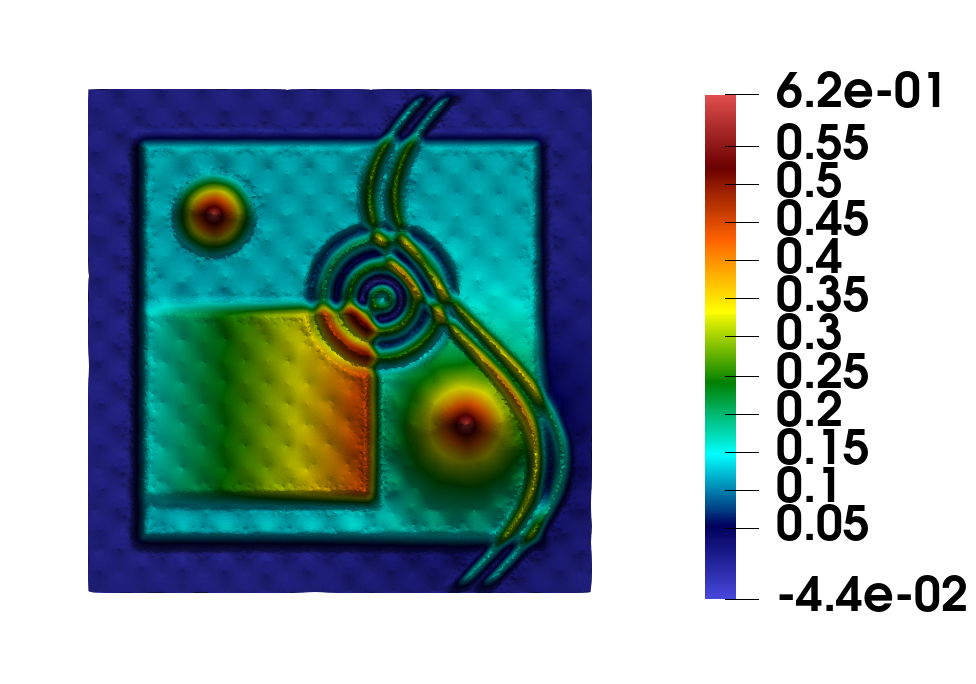}
    \caption{Interpolation results for multiscale $f$-greedy for the target
    function in \Cref{fig:composite_function} for levels $\ell = 2, 4, 6$.}
    \label{fig:composite_sol}
\end{figure}

\begin{table}[htb]
\centering
\renewcommand{\arraystretch}{1.25}
\begin{tabular}{ c c c }
\hline
$\ell$ & $\varepsilon_\infty$ & $\kappa_2$ \\
\hline
1 & $4.12 \cdot 10^{-1}$ & $2.15 \cdot 10^{1}$ \\
2 & $2.29 \cdot 10^{-1}$ & $3.25 \cdot 10^{2}$ \\
3 & $1.34 \cdot 10^{-1}$ & $6.13 \cdot 10^{2}$ \\
4 & $7.77 \cdot 10^{-2}$ & $8.19 \cdot 10^{2}$ \\
5 & $1.80 \cdot 10^{-2}$ & $1.31 \cdot 10^{3}$ \\
6 & $3.62 \cdot 10^{-3}$ & $2.10 \cdot 10^{3}$ \\
\hline
\end{tabular}
\caption{Approximation of the target function in \Cref{fig:composite_function} with the exponential kernel: error and condition number of the kernel matrix at the end of each level, for the multiscale $f$-greedy algorithm.}
\label{tab:error_composite}
\end{table}

\subsection{\texorpdfstring{Multiscale $f$-greedy approximation on a surface}{Multiscale f-greedy approximation on a surface}}
\label{subsec:bunny}

In this experiment we consider scattered data on the Stanford Bunny surface from the \texttt{Stanford 3D Scanning Repository}\footnote{\url{https://graphics.stanford.edu/data/3Dscanrep/}}, discretized by $N = 300\,000$ points.
Although the theoretical results are formulated for Euclidean domains, this example shows that the methods also work on manifolds.

The target function combines a smooth localized feature and a non-smooth one,
\begin{equation}\label{eq:bunny_function}
    f(\bs x) = \alpha |x_1 - c|
             + \exp\Big(-\frac{\|\bs x - \bs c_g\|^2}{2\sigma_g^2}\Big),
\end{equation}
with $c = -0.080$, $\alpha = 7$, $\bs c_g = (0.019, 0.084, 0.051)$ and $\sigma_g = 0.015$.
The first term is a classical corner.
Its gradient jumps across the plane $x_1 = c$, which intersects the Bunny along a closed curve running across the head and the chest, 
while away from that curve the target is simply linear in $x_1$.
The second term is a Gaussian bump located on the back of the Bunny.
The target is therefore smooth except along one curve of the manifold, where it is only Lipschitz continuous.
Since $f \notin C^1(\Omega)$, we employ the exponential kernel.

The Bunny is a two-dimensional manifold embedded in $\mathbb{R}^3$, so the dimension entering \eqref{eq:level_cardinality_num} and \eqref{eq:delta_ell} is the \emph{intrinsic} one, $d = 2$.
We therefore use $n_\ell = 4 n_{\ell-1}$ with $n_1 = 9$ and $L = 6$ levels.
The resulting level cardinalities and shape parameters are collected in \Cref{tab:levels_bunny}.

\begin{table}[htbp]
\centering
\renewcommand{\arraystretch}{1.25}
\begin{tabular}{l c c c c c c }
\hline
$\ell$ & 1 & 2 & 3 & 4 & 5 & 6\\
\hline
$n_\ell$      & $9$ & $36$ & $144$ & $576$ & $2\,304$ & $9\,216$\\
$\delta_\ell$ & $0.0811$ & $0.0406$ & $0.0203$ & $0.0101$ & $0.0051$ & $0.0025$ \\
\hline
\end{tabular}
\caption{Level cardinalities and kernel shape parameters for the Bunny experiment, corresponding to $n_1 = 9$, intrinsic dimension $d = 2$ and $\nu = 1$.}
\label{tab:levels_bunny}
\end{table}

The selected centres and the corresponding interpolants are shown in \Cref{fig:bunny_points_sol}, the errors and condition numbers are reported in \Cref{tab:error_bunny}.
The multiscale $f$-greedy algorithm captures both features of \eqref{eq:bunny_function} adaptively.
The coarser levels place a few centres over the whole surface, whereas the finer levels, with progressively smaller shape parameters, concentrate the new centres around the Gaussian bump and, mostly, along the corner.

\begin{figure}[!htbp]
    \centering
    \begin{subfigure}[b]{\textwidth}
        \centering
        \includegraphics[width=0.21\linewidth]{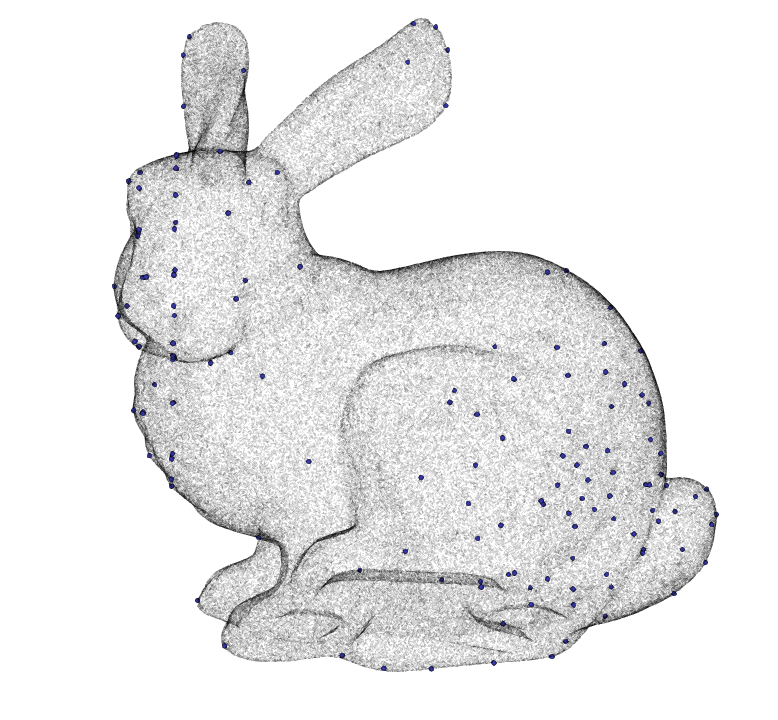} \quad \quad
        \includegraphics[width=0.21\linewidth]{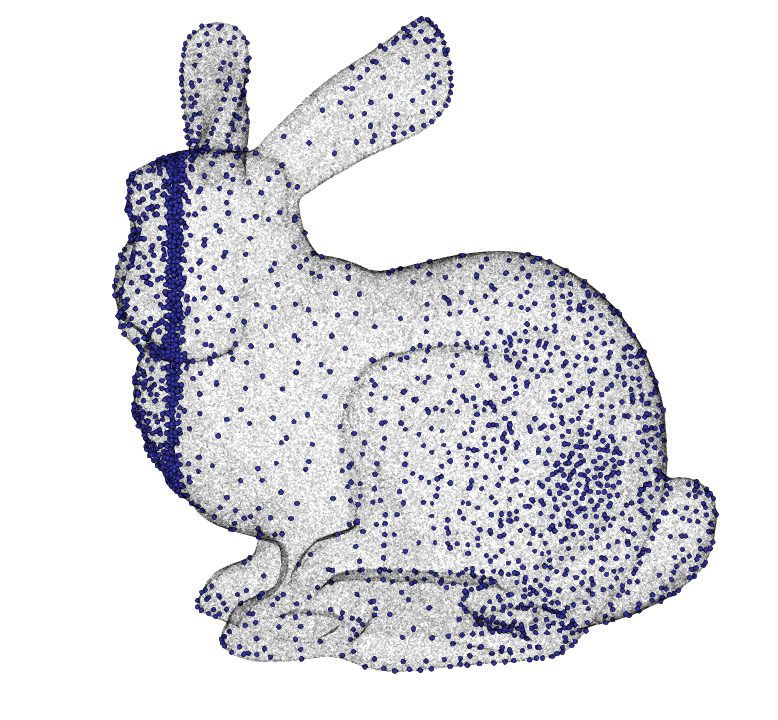}
        \quad \quad
        \includegraphics[width=0.21\linewidth]{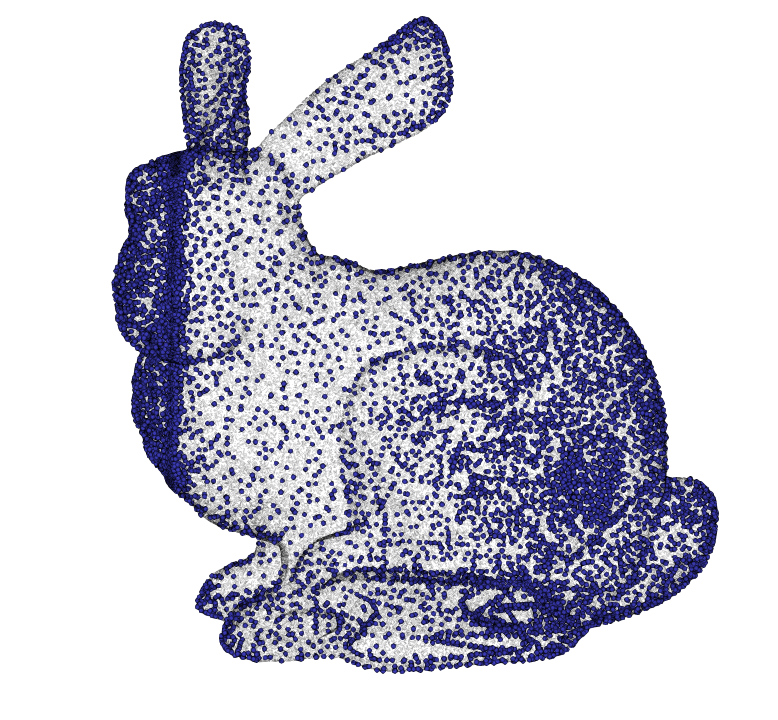}
        \quad
        \includegraphics[width=0.1\linewidth]{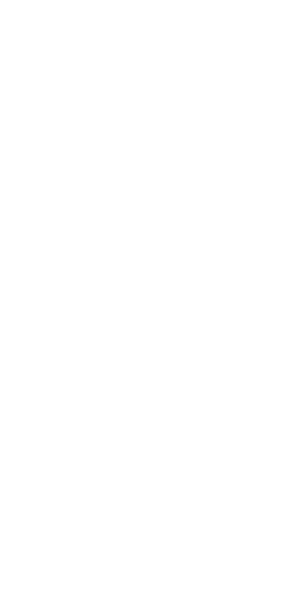}
    \end{subfigure}

    \vspace{0.5cm}

    \begin{subfigure}[b]{\textwidth}
        \centering
        \includegraphics[width=0.21\linewidth]{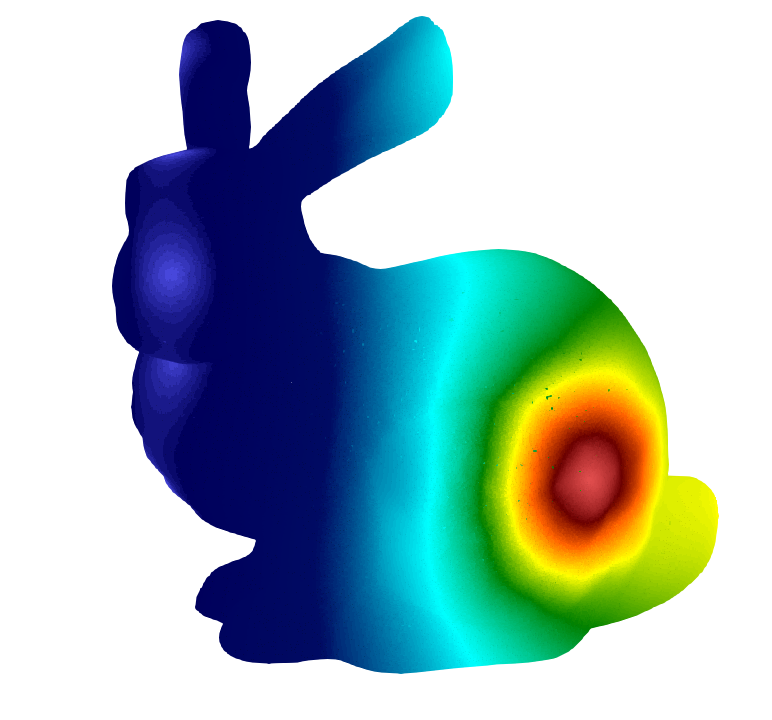}
        \quad \quad
        \includegraphics[width=0.21\linewidth]{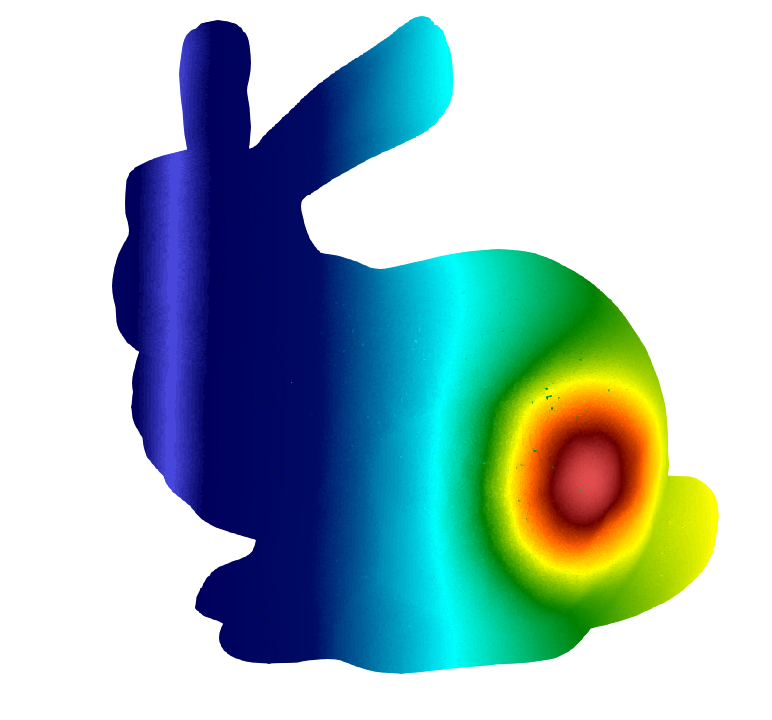}
        \quad \quad
        \includegraphics[width=0.21\linewidth]{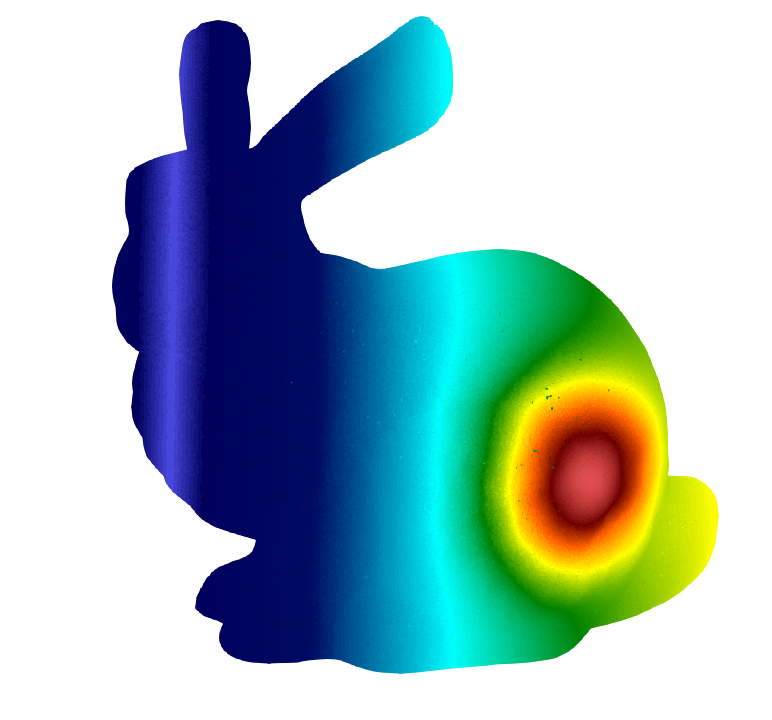}
        \quad
        \includegraphics[width=0.1\linewidth]{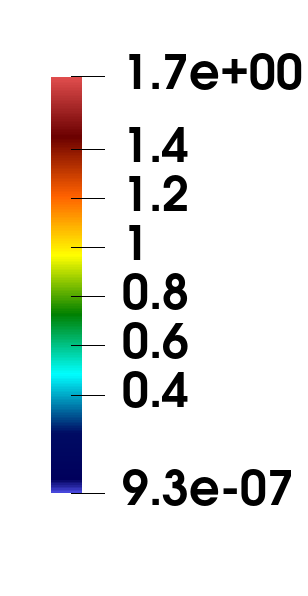}
    \end{subfigure}

    \caption{Approximation of \eqref{eq:bunny_function} on the Bunny point cloud by multiscale $f$-greedy. Top: centres selected on levels $\ell = 2, 4, 6$. Bottom: the corresponding multiscale approximants.}
    \label{fig:bunny_points_sol}
\end{figure}

\begin{table}[htb]
\centering
\renewcommand{\arraystretch}{1.25}
\begin{tabular}{ c c c }
\hline
$\ell$ & $\varepsilon_\infty$ & $\kappa_2$ \\
\hline
1 & $1.37 \cdot 10^{-1}$ & $5.12 \cdot 10^{1}$ \\
2 & $5.10 \cdot 10^{-2}$ & $7.37 \cdot 10^{1}$ \\
3 & $1.43 \cdot 10^{-2}$ & $1.04 \cdot 10^{2}$ \\
4 & $4.17 \cdot 10^{-3}$ & $1.55 \cdot 10^{2}$ \\
5 & $1.03 \cdot 10^{-3}$ & $5.97 \cdot 10^{2}$ \\
6 & $2.45 \cdot 10^{-4}$ & $1.17 \cdot 10^{3}$ \\
\hline
\end{tabular}
\caption{Approximation of \eqref{eq:bunny_function} on the Bunny point cloud with the exponential kernel: error and condition number of the kernel matrix at the end of each level, for the multiscale $f$-greedy algorithm.}
\label{tab:error_bunny}
\end{table}

\subsection{Comparison of the methods}
\label{subsec:comparison}

The target function is
\begin{equation}\label{eq:abs_function}
    f(\bs x) = |x_1 + x_2 - 1|,
    \qquad \bs x \in [0,1]^2,
\end{equation}
which is linear on both sides of the diagonal $x_1 + x_2 = 1$ and has a kink across it.
Since $f \in H^{3/2-\epsilon}(\Omega)$ for every $\epsilon > 0$ but $f \notin H^{3/2}(\Omega)$, it lies just outside the native space of the exponential kernel. Thus, we use the rates of \Cref{th:stability_convergence} as reference rates.
The kernel is the exponential one and the level setting is that of \Cref{tab:levels_2d}, so that every greedy method ends with $n = 5\,120$ centres selected from the same $N = 100\,000$ candidates.
\paragraph{Multiscale against single scale.}
\Cref{tab:error_abs} reports the errors and condition numbers at the end of every level for the multiscale greedy algorithms.
The multiscale $f$-greedy approximant reaches $\varepsilon_\infty = 1.48 \cdot 10^{-4}$ after six levels, against $8.27 \cdot 10^{-3}$ for the multiscale $P$-greedy.
The condition numbers behave as \Cref{rem:stability_convergence_cases} predicts.
For $P$-greedy the centres of every level are quasi-uniform, the ratio $\delta_\ell / q_{X_\ell}$ stays between $2.6$ and $7.2$, and the condition numbers saturate at about $10^2$.
For $f$-greedy the centres cluster along the kink, $\delta_\ell / q_{X_\ell}$ grows, and the condition numbers grow to almost $10^{4}$.

\begin{table}[htb]
\centering
\renewcommand{\arraystretch}{1.25}
\begin{tabular}{ c c c c c c c }
\hline
 & & \multicolumn{2}{c}{multiscale $f$-greedy}
 & & \multicolumn{2}{c}{multiscale $P$-greedy} \\
\cline{3-4}\cline{6-7}
$\ell$ & $n_\ell$ & $\varepsilon_\infty$ & $\kappa_2$
       & & $\varepsilon_\infty$ & $\kappa_2$ \\
\hline
1 & $5$      & $1.93 \cdot 10^{-1}$ & $6.58 \cdot 10^{0}$ & & $1.78 \cdot 10^{-1}$ & $5.76 \cdot 10^{0}$ \\
2 & $20$     & $6.17 \cdot 10^{-2}$ & $3.83 \cdot 10^{1}$ & & $1.12 \cdot 10^{-1}$ & $2.10 \cdot 10^{1}$ \\
3 & $80$     & $1.83 \cdot 10^{-2}$ & $1.14 \cdot 10^{2}$ & & $5.50 \cdot 10^{-2}$ & $5.20 \cdot 10^{1}$ \\
4 & $320$    & $5.63 \cdot 10^{-3}$ & $4.61 \cdot 10^{2}$ & & $2.91 \cdot 10^{-2}$ & $9.88 \cdot 10^{1}$ \\
5 & $1\,280$ & $1.15 \cdot 10^{-3}$ & $1.91 \cdot 10^{3}$ & & $1.50 \cdot 10^{-2}$ & $1.17 \cdot 10^{2}$ \\
6 & $5\,120$ & $1.48 \cdot 10^{-4}$ & $7.13 \cdot 10^{3}$ & & $8.27 \cdot 10^{-3}$ & $1.31 \cdot 10^{2}$ \\
\hline
\end{tabular}
\caption{Approximation of \eqref{eq:abs_function} with the exponential kernel: error and condition number of the kernel matrix at the end of each level, for the multiscale $f$-greedy and $P$-greedy algorithms.}
\label{tab:error_abs}
\end{table}

\Cref{fig:abs_error} sets the multiscale runs against the single-scale ones.
For each of the six shape parameters of \Cref{tab:levels_2d} the single-scale algorithm selects $5\,120$ centres at that fixed scale, and its error and condition number are traced against the number of centres, together with those of the multiscale run.
The trade-off of \Cref{th:tradeoff} is visible in both panels.
As $\delta$ decreases, the error of the single-scale runs increases and their condition number decreases,  see \Cref{fig:abs_cond}.
The sawtooth pattern of the multiscale condition numbers in \Cref{fig:abs_cond} reflects the hierarchical level structure of the method: the condition number grows within a level and drops at the start of the next one, where the scale halves.
In the rest of the comparison the single-scale algorithms are run using $\delta_1$.

\definecolor{s1}{RGB}{158,202,225}
\definecolor{s2}{RGB}{107,174,214}
\definecolor{s3}{RGB}{66,146,198}
\definecolor{s4}{RGB}{33,113,181}
\definecolor{s5}{RGB}{8,81,156}
\definecolor{s6}{RGB}{8,48,107}
\colorlet{msc}{red}              %
\colorlet{stc}{blue}             %
\colorlet{clc}{green!55!black}   %
\pgfplotsset{
  greedycmp/.style={
    width=.5\textwidth, height=6.2cm,
    xlabel={number of centres},
    grid=both, grid style={gray!20},
    legend style={font=\small, draw=none, fill=none},
    legend cell align=left,
    every axis plot/.append style={line width=.7pt},
  },
  condcmp/.style={greedycmp, width=.33\textwidth, height=5.2cm, ymin=1},
}
\newcommand{\czeroM}{0.72}
\newcommand{\ckappaM}{1.78}
\newcommand{\abstrace}[1]{Figures/trace_abs_#1.txt}

\begin{figure}[!htbp]
\centering
\begin{tikzpicture}
\begin{loglogaxis}[greedycmp, ylabel={$\varepsilon_\infty$}, title={$f$-greedy},
                   legend pos=south west, ymin=1e-5, ymax=2e0]
\addplot[s1] table[skip first n=1, x=n, y=linf] {\abstrace{fgreedy_1}}; \addlegendentry{$\delta_1$}
\addplot[s2] table[skip first n=1, x=n, y=linf] {\abstrace{fgreedy_2}}; \addlegendentry{$\delta_2$}
\addplot[s3] table[skip first n=1, x=n, y=linf] {\abstrace{fgreedy_3}}; \addlegendentry{$\delta_3$}
\addplot[s4] table[skip first n=1, x=n, y=linf] {\abstrace{fgreedy_4}}; \addlegendentry{$\delta_4$}
\addplot[s5] table[skip first n=1, x=n, y=linf] {\abstrace{fgreedy_5}}; \addlegendentry{$\delta_5$}
\addplot[s6] table[skip first n=1, x=n, y=linf] {\abstrace{fgreedy_6}}; \addlegendentry{$\delta_6$}
\addplot[msc, line width=1pt] table[skip first n=1, x=n, y=linf] {\abstrace{ms_fgreedy}}; \addlegendentry{multiscale}
\end{loglogaxis}
\end{tikzpicture}%
\begin{tikzpicture}
\begin{loglogaxis}[greedycmp, ylabel={$\varepsilon_\infty$}, title={$P$-greedy},
                   legend pos=south west, ymin=1e-5, ymax=2e0]
\addplot[s1] table[skip first n=1, x=n, y=linf] {\abstrace{pgreedy_1}}; \addlegendentry{$\delta_1$}
\addplot[s2] table[skip first n=1, x=n, y=linf] {\abstrace{pgreedy_2}}; \addlegendentry{$\delta_2$}
\addplot[s3] table[skip first n=1, x=n, y=linf] {\abstrace{pgreedy_3}}; \addlegendentry{$\delta_3$}
\addplot[s4] table[skip first n=1, x=n, y=linf] {\abstrace{pgreedy_4}}; \addlegendentry{$\delta_4$}
\addplot[s5] table[skip first n=1, x=n, y=linf] {\abstrace{pgreedy_5}}; \addlegendentry{$\delta_5$}
\addplot[s6] table[skip first n=1, x=n, y=linf] {\abstrace{pgreedy_6}}; \addlegendentry{$\delta_6$}
\addplot[msc, line width=1pt] table[skip first n=1, x=n, y=linf] {\abstrace{ms_pgreedy}}; \addlegendentry{multiscale}
\end{loglogaxis}
\end{tikzpicture}
\caption{Approximation of \eqref{eq:abs_function}: decay of $\varepsilon_\infty$ against the number of centres for the single-scale runs at the six shape parameters of \Cref{tab:levels_2d} and for the multiscale run. Left: $f$-greedy. Right: $P$-greedy.}
\label{fig:abs_error}
\end{figure}

\begin{figure}[!htbp]
\centering
\begin{tikzpicture}
\begin{loglogaxis}[greedycmp, ylabel={$\kappa_2$}, title={$f$-greedy},
                   legend pos=north west, ymin=5e-1, ymax=1e8]
\addplot[s1] table[skip first n=1, x=n, y=cond] {\abstrace{fgreedy_1}}; \addlegendentry{$\delta_1$}
\addplot[s2] table[skip first n=1, x=n, y=cond] {\abstrace{fgreedy_2}}; \addlegendentry{$\delta_2$}
\addplot[s3] table[skip first n=1, x=n, y=cond] {\abstrace{fgreedy_3}}; \addlegendentry{$\delta_3$}
\addplot[s4] table[skip first n=1, x=n, y=cond] {\abstrace{fgreedy_4}}; \addlegendentry{$\delta_4$}
\addplot[s5] table[skip first n=1, x=n, y=cond] {\abstrace{fgreedy_5}}; \addlegendentry{$\delta_5$}
\addplot[s6] table[skip first n=1, x=n, y=cond] {\abstrace{fgreedy_6}}; \addlegendentry{$\delta_6$}
\addplot[msc, line width=1pt] table[skip first n=1, x=n, y=cond] {\abstrace{ms_fgreedy}}; \addlegendentry{multiscale}
\end{loglogaxis}
\end{tikzpicture}%
\begin{tikzpicture}
\begin{loglogaxis}[greedycmp, ylabel={$\kappa_2$}, title={$P$-greedy},
                   legend pos=north west, ymin=5e-1, ymax=1e8]
\addplot[s1] table[skip first n=1, x=n, y=cond] {\abstrace{pgreedy_1}}; \addlegendentry{$\delta_1$}
\addplot[s2] table[skip first n=1, x=n, y=cond] {\abstrace{pgreedy_2}}; \addlegendentry{$\delta_2$}
\addplot[s3] table[skip first n=1, x=n, y=cond] {\abstrace{pgreedy_3}}; \addlegendentry{$\delta_3$}
\addplot[s4] table[skip first n=1, x=n, y=cond] {\abstrace{pgreedy_4}}; \addlegendentry{$\delta_4$}
\addplot[s5] table[skip first n=1, x=n, y=cond] {\abstrace{pgreedy_5}}; \addlegendentry{$\delta_5$}
\addplot[s6] table[skip first n=1, x=n, y=cond] {\abstrace{pgreedy_6}}; \addlegendentry{$\delta_6$}
\addplot[msc, line width=1pt] table[skip first n=1, x=n, y=cond] {\abstrace{ms_pgreedy}}; \addlegendentry{multiscale}
\end{loglogaxis}
\end{tikzpicture}
\caption{Approximation of \eqref{eq:abs_function}: condition number $\kappa_2$ of the kernel matrix against the number of centres, for the single-scale runs at the six shape parameters of \Cref{tab:levels_2d} and for the multiscale run, whose $\kappa_2$ is that of the current level. Left: $f$-greedy. Right: $P$-greedy.}
\label{fig:abs_cond}
\end{figure}

\paragraph{The other approaches.}
The conditioning-controlled variant runs with $\rho = 1/4$ and $\theta = \mu^{\tau} =  2^{-3/2}$, so that $\delta_\ell / q_{X_\ell} \leq 8$ on every level by \eqref{eq:stable_ratio}.
The errors and condition numbers are reported in \Cref{tab:error_abs_stable}.

The classical multiscale method works on nested quasi-uniform subsets of $X$: level $\ell$ keeps the points of level $\ell - 1$ and adds the candidate closest to the centre of every cell of a $2^{\ell+1} \times 2^{\ell+1}$ grid, thinned to a separation of half a cell.
This gives uniform refinement within the levels.
The kernel is the Wendland function $\phi_{2,0}(r) = (1 - r)_+^2$, which is continuous with $\tau = 3/2$ in $d = 2$, the same smoothness as the exponential kernel, with support radius $2 \delta_\ell$.
The level systems are sparse and are solved by the conjugate gradient method to a relative residual of $10^{-10}$.
Level $5$, with $5\,297$ points, is the one comparable to the greedy budget.
Since the cost per centre of this method is small, the hierarchy is continued to level $7$.

\begin{table}[htb]
\centering
\renewcommand{\arraystretch}{1.25}
\begin{tabular}{ c c c c c c c c }
\hline
 & \multicolumn{3}{c}{CC $f$-greedy}
 & & \multicolumn{3}{c}{CC $P$-greedy} \\
\cline{2-4}\cline{6-8}
$\ell$ & $n_\ell$ & $\varepsilon_\infty$ & $\kappa_2$
       & & $n_\ell$ & $\varepsilon_\infty$ & $\kappa_2$ \\
\hline
1 & $1$      & $7.84 \cdot 10^{-1}$ & $1.00 \cdot 10^{0}$ & & $3$      & $9.89 \cdot 10^{-1}$ & $2.95 \cdot 10^{0}$ \\
2 & $3$      & $3.13 \cdot 10^{-1}$ & $1.81 \cdot 10^{0}$ & & $4$      & $2.77 \cdot 10^{-1}$ & $1.86 \cdot 10^{0}$ \\
3 & $10$     & $1.36 \cdot 10^{-1}$ & $3.93 \cdot 10^{0}$ & & $17$     & $1.73 \cdot 10^{-1}$ & $4.39 \cdot 10^{0}$ \\
4 & $37$     & $3.36 \cdot 10^{-2}$ & $4.99 \cdot 10^{0}$ & & $72$     & $8.72 \cdot 10^{-2}$ & $8.59 \cdot 10^{0}$ \\
5 & $185$    & $1.06 \cdot 10^{-2}$ & $2.31 \cdot 10^{1}$ & & $335$    & $3.83 \cdot 10^{-2}$ & $1.54 \cdot 10^{1}$ \\
6 & $757$    & $5.24 \cdot 10^{-3}$ & $6.84 \cdot 10^{1}$ & & $1\,295$ & $2.01 \cdot 10^{-2}$ & $1.68 \cdot 10^{1}$ \\
7 & $2\,759$ & $2.38 \cdot 10^{-3}$ & $9.61 \cdot 10^{1}$ & & $5\,120$ & $9.04 \cdot 10^{-3}$ & $1.73 \cdot 10^{1}$ \\
8 & $5\,120$ & $1.31 \cdot 10^{-3}$ & $6.55 \cdot 10^{1}$ & & --       & --                   & --                  \\
\hline
\end{tabular}
\caption{Approximation of \eqref{eq:abs_function} by the conditioning-controlled variant with $\rho = 1/4$ and $\theta = 2^{-3/2}$: size of the levels decided by the certificate, error and condition number of the kernel matrix at the end of each level, for the $f$-greedy and the $P$-greedy rule.}
\label{tab:error_abs_stable}
\end{table}

\paragraph{Conditioning.}
\Cref{fig:abs_comparison_cond} shows, for every method, the condition number of every kernel matrix, together with the two bounds of \eqref{eq:general_conditioning} for that matrix, which read
\[
    c_0 \big( \delta_\ell n_\ell^{1/d} \big)^{2\tau}
    \leq \kappa_2
    \leq C_\kappa \max\{1, \delta_\ell / q_{X_\ell}\}^{2\tau},
\]
where $n_\ell$ is the current number of centres of the level and $q_{X_\ell}$ their separation radius.
The constants $c_0$ and $C_\kappa$ depend only on the kernel, the dimension and the domain, but the theory does not give their values.
We therefore take $c_0 = 0.72$ and $C_\kappa = 1.78$, the largest and the smallest values for which the bounds hold for all the matrices of all the methods.
The figure shows the cases of \Cref{rem:stability_convergence_cases}.
At a fixed scale the lower bound grows like $n_\ell^{2\tau/d}$.
For the multiscale methods with prescribed level sizes, $\delta_\ell n_\ell^{1/d} = \nu$ at the end of every level, so the lower bound is the same on every level.
The condition numbers of the multiscale $f$-greedy grow with $\delta_\ell / q_{X_\ell}$, together with the upper bound, and remain three orders of magnitude below the single scale.
The exclusion radius of the CC variant keeps $\delta_\ell / q_{X_\ell} \leq 8$, so the upper bound stays below $C_\kappa 8^{2\tau}$ and the condition numbers stay below $10^2$ on all levels, with both rules.

\begin{figure}[!htbp]
\centering
\begin{tikzpicture}
\begin{loglogaxis}[condcmp, xlabel={}, ylabel={$\kappa_2$}, title={$f$-greedy, $\delta_1$}, ymin=1e-2, ymax=1e12, legend pos=north west]
\addplot[black] table[skip first n=1, x=n, y=cond] {\abstrace{fgreedy_1}}; \addlegendentry{$\kappa_2$}
\addplot[gray, dashed] table[skip first n=1, x=n, y expr=\ckappaM*\thisrowno{8}] {\abstrace{fgreedy_1}};
\addplot[gray, dotted, line width=1pt] table[skip first n=1, x=n, y expr=\czeroM*\thisrowno{7}] {\abstrace{fgreedy_1}};
\end{loglogaxis}
\end{tikzpicture}%
\begin{tikzpicture}
\begin{loglogaxis}[condcmp, xlabel={}, title={multiscale $f$-greedy}, ymin=1e-2, ymax=1e12]
\addplot[msc] table[skip first n=1, x=n, y=cond] {\abstrace{ms_fgreedy}};
\addplot[gray, dashed] table[skip first n=1, x=n, y expr=\ckappaM*\thisrowno{8}] {\abstrace{ms_fgreedy}};
\addplot[gray, dotted, line width=1pt] table[skip first n=1, x=n, y expr=\czeroM*\thisrowno{7}] {\abstrace{ms_fgreedy}};
\end{loglogaxis}
\end{tikzpicture}%
\begin{tikzpicture}
\begin{loglogaxis}[condcmp, xlabel={}, title={CC $f$-greedy}, ymin=1e-2, ymax=1e12,
                   xmin=0.7, xmax=1e4, xtickten={0,...,4}, xticklabels={,$10^{1}$,$10^{2}$,$10^{3}$,$10^{4}$},]
\addplot[stc] table[skip first n=1, x=n, y=cond] {\abstrace{stable_fgreedy}};
\addplot[gray, dashed] table[skip first n=1, x=n, y expr=\ckappaM*\thisrowno{8}] {\abstrace{stable_fgreedy}};
\addplot[gray, dotted, line width=1pt] table[skip first n=1, x=n, y expr=\czeroM*\thisrowno{7}] {\abstrace{stable_fgreedy}};
\end{loglogaxis}
\end{tikzpicture}\\
\begin{tikzpicture}
\begin{loglogaxis}[condcmp, ylabel={$\kappa_2$}, title={$P$-greedy, $\delta_1$}, ymin=1e-2, ymax=1e12]
\addplot[black] table[skip first n=1, x=n, y=cond] {\abstrace{pgreedy_1}};
\addplot[gray, dashed] table[skip first n=1, x=n, y expr=\ckappaM*\thisrowno{8}] {\abstrace{pgreedy_1}};
\addplot[gray, dotted, line width=1pt] table[skip first n=1, x=n, y expr=\czeroM*\thisrowno{7}] {\abstrace{pgreedy_1}};
\end{loglogaxis}
\end{tikzpicture}%
\begin{tikzpicture}
\begin{loglogaxis}[condcmp, title={multiscale $P$-greedy}, ymin=1e-2, ymax=1e12]
\addplot[msc] table[skip first n=1, x=n, y=cond] {\abstrace{ms_pgreedy}};
\addplot[gray, dashed] table[skip first n=1, x=n, y expr=\ckappaM*\thisrowno{8}] {\abstrace{ms_pgreedy}};
\addplot[gray, dotted, line width=1pt] table[skip first n=1, x=n, y expr=\czeroM*\thisrowno{7}] {\abstrace{ms_pgreedy}};
\end{loglogaxis}
\end{tikzpicture}%
\begin{tikzpicture}
\begin{loglogaxis}[condcmp, title={CC $P$-greedy}, ymin=1e-2, ymax=1e12]
\addplot[stc] table[skip first n=1, x=n, y=cond] {\abstrace{stable_pgreedy}};
\addplot[gray, dashed] table[skip first n=1, x=n, y expr=\ckappaM*\thisrowno{8}] {\abstrace{stable_pgreedy}};
\addplot[gray, dotted, line width=1pt] table[skip first n=1, x=n, y expr=\czeroM*\thisrowno{7}] {\abstrace{stable_pgreedy}};
\end{loglogaxis}
\end{tikzpicture}
\caption{Approximation of \eqref{eq:abs_function}: condition number of every kernel matrix built by the six greedy methods against the number of centres, with the two bounds of \eqref{eq:general_conditioning} evaluated for that matrix, the upper one dashed and the lower one dotted.
Top: $f$-greedy rule. Bottom: $P$-greedy rule. Left to right: single scale, multiscale with prescribed level sizes, conditioning-controlled variant.}
\label{fig:abs_comparison_cond}
\end{figure}

\paragraph{Convergence.}
\Cref{fig:abs_comparison_error} shows $\varepsilon_\infty$ and $\varepsilon_2$ against the number of centres.
The dashed lines are the {reference rates} of \Cref{th:stability_convergence}, which for $\tau = 3/2$ and $d = 2$ are $n^{-3/4}$ for the $f$-greedy rule and $n^{-1/4}$ for quasi-uniform centres in $\varepsilon_\infty$, and $n^{-3/4}$ for both in $\varepsilon_2$.
The final values are collected in \Cref{tab:abs_comparison}.

\begin{figure}[!htbp]
\centering
\begin{tikzpicture}
\begin{loglogaxis}[greedycmp, xlabel={}, ylabel={$\varepsilon_\infty$}, title={$f$-greedy},
                   legend pos=south west, ymin=1e-5, ymax=1e1]
\addplot[black] table[skip first n=1, x=n, y=linf] {\abstrace{fgreedy_1}}; \addlegendentry{single scale}
\addplot[msc] table[skip first n=1, x=n, y=linf] {\abstrace{ms_fgreedy}}; \addlegendentry{multiscale}
\addplot[stc] table[skip first n=1, x=n, y=linf] {\abstrace{stable_fgreedy}}; \addlegendentry{CC multiscale}
\addplot[gray, dashed, domain=5:5120, samples=2] {0.0894*x^(-0.75)}; \addlegendentry{$n^{-\tau/d}$}
\end{loglogaxis}
\end{tikzpicture}%
\begin{tikzpicture}
\begin{loglogaxis}[greedycmp, xlabel={}, ylabel={$\varepsilon_\infty$}, title={$P$-greedy},
                   legend pos=south west, ymin=1e-5, ymax=1e1]
\addplot[black] table[skip first n=1, x=n, y=linf] {\abstrace{pgreedy_1}}; \addlegendentry{single scale}
\addplot[msc] table[skip first n=1, x=n, y=linf] {\abstrace{ms_pgreedy}}; \addlegendentry{multiscale}
\addplot[stc] table[skip first n=1, x=n, y=linf] {\abstrace{stable_pgreedy}}; \addlegendentry{CC multiscale}
\addplot[clc] table[skip first n=1, x=n, y=linf] {\abstrace{classical}}; \addlegendentry{classical, Wendland}
\addplot[gray, dashed, domain=5:55120, samples=2] {0.0700*x^(-0.25)}; \addlegendentry{$n^{-\tau/d+1/2}$}
\end{loglogaxis}
\end{tikzpicture}\\
\begin{tikzpicture}
\begin{loglogaxis}[greedycmp, ylabel={$\varepsilon_2$},
                   legend pos=south west, ymin=1e-5, ymax=1e1]
\addplot[black, forget plot] table[skip first n=1, x=n, y=l2] {\abstrace{fgreedy_1}};
\addplot[msc, forget plot] table[skip first n=1, x=n, y=l2] {\abstrace{ms_fgreedy}};
\addplot[stc, forget plot] table[skip first n=1, x=n, y=l2] {\abstrace{stable_fgreedy}};
\addplot[gray, dashed, domain=5:5120, samples=2] {0.115*x^(-0.75)}; \addlegendentry{$n^{-\tau/d}$}
\end{loglogaxis}
\end{tikzpicture}%
\begin{tikzpicture}
\begin{loglogaxis}[greedycmp, ylabel={$\varepsilon_2$},
                   legend pos=south west, ymin=1e-5, ymax=1e1]
\addplot[black, forget plot] table[skip first n=1, x=n, y=l2] {\abstrace{pgreedy_1}};
\addplot[msc, forget plot] table[skip first n=1, x=n, y=l2] {\abstrace{ms_pgreedy}};
\addplot[stc, forget plot] table[skip first n=1, x=n, y=l2] {\abstrace{stable_pgreedy}};
\addplot[clc, forget plot] table[skip first n=1, x=n, y=l2] {\abstrace{classical}};
\addplot[gray, dashed, domain=5:55120, samples=2] {0.3*x^(-0.75)}; \addlegendentry{$n^{-\tau/d}$}
\end{loglogaxis}
\end{tikzpicture}
\caption{Approximation of \eqref{eq:abs_function}: decay of $\varepsilon_\infty$ (top) and $\varepsilon_2$ (bottom) against the number of centres. Left: $f$-greedy rule. Right: $P$-greedy rule. The dashed lines are the {reference rates} of \Cref{th:stability_convergence}.}
\label{fig:abs_comparison_error}
\end{figure}

\paragraph{Cost.}
\Cref{fig:abs_comparison_cost} shows the offline time against the number of centres and the error against the offline time, and \Cref{tab:abs_comparison} collects the final state of every method.
The offline time of the greedy methods grows like $N n^2$, as predicted in \Cref{tab:cost}, and is the same for the single-scale and the multiscale algorithms, since with nested levels the greedy steps of all the levels add up to one single-scale run.
Only the CC $P$-greedy variant takes twice as long, because its certificate needs the residual on the candidates at every step.
The classical method grows linearly in the number of centres and has the smallest online cost, since its kernels are compactly supported.

\begin{table}[htb]
\centering
\renewcommand{\arraystretch}{1.25}
\small
\begin{tabular}{ l c c c c c c c c }
\hline
method & $L$ & $n$ & terms & $\varepsilon_\infty$ & $\varepsilon_2$ & $\max_\ell \kappa_2$ & offline [s] & online [s] \\
\hline
$f$-greedy, $\delta_1$   & 1 & $5\,120$  & $5\,120$  & $1.09 \cdot 10^{-4}$ & $1.36 \cdot 10^{-4}$ & $8.73 \cdot 10^{6}$ & $258$ & $11.9$ \\
multiscale $f$-greedy    & 6 & $5\,120$  & $6\,825$  & $1.48 \cdot 10^{-4}$ & $1.90 \cdot 10^{-4}$ & $7.13 \cdot 10^{3}$ & $264$ & $12.2$ \\
CC $f$-greedy        & 8 & $5\,120$  & $8\,872$  & $1.31 \cdot 10^{-3}$ & $6.03 \cdot 10^{-4}$ & $9.61 \cdot 10^{1}$ & $271$ & $17.1$ \\
$P$-greedy, $\delta_1$   & 1 & $5\,120$  & $5\,120$  & $8.61 \cdot 10^{-3}$ & $1.11 \cdot 10^{-3}$ & $5.10 \cdot 10^{5}$ & $256$ & $8.6$ \\
multiscale $P$-greedy    & 6 & $5\,120$  & $6\,825$  & $8.27 \cdot 10^{-3}$ & $1.18 \cdot 10^{-3}$ & $1.31 \cdot 10^{2}$ & $279$ & $13.2$ \\
CC $P$-greedy        & 7 & $5\,120$  & $6\,846$  & $9.04 \cdot 10^{-3}$ & $1.25 \cdot 10^{-3}$ & $1.73 \cdot 10^{1}$ & $510$ & $16.9$ \\
classical   & 5 & $5\,297$  & $7\,089$  & $3.10 \cdot 10^{-2}$ & $1.65 \cdot 10^{-3}$ & $3.96 \cdot 10^{2}$ & $0.9$ & $1.3$ \\
classical & 7 & $52\,791$ & $78\,558$ & $3.71 \cdot 10^{-3}$ & $1.52 \cdot 10^{-4}$ & $3.96 \cdot 10^{2}$ & $2.2$ & $2.2$ \\
\hline
\end{tabular}
\caption{Approximation of \eqref{eq:abs_function}: state of every method at the final level $L$. $n$ is the number of centres, terms the number of kernel terms of the approximant, $\max_\ell \kappa_2$ the largest condition number over the levels. The offline time is the time of the selection and of the solution of the level systems, the online time is the time to evaluate the final approximant at the $N = 100\,000$ data points.}
\label{tab:abs_comparison}
\end{table}

\begin{figure}[!htbp]
\centering
\begin{tikzpicture}
\begin{loglogaxis}[greedycmp, ylabel={offline time [s]}, legend style={draw=none}, width=0.42\textwidth, height=0.35\textwidth, legend pos = south east]
\addplot[black,forget plot] table[skip first n=1, x=n, y index=11] {\abstrace{fgreedy_1}};
\addplot[msc,forget plot] table[skip first n=1, x=n, y index=11] {\abstrace{ms_fgreedy}};
\addplot[stc,forget plot] table[skip first n=1, x=n, y index=11] {\abstrace{stable_fgreedy}};
\addplot[black, dashed,forget plot] table[skip first n=1, x=n, y index=11] {\abstrace{pgreedy_1}};
\addplot[msc, dashed, forget plot] table[skip first n=1, x=n, y index=11] {\abstrace{ms_pgreedy}};
\addplot[stc, dashed, forget plot] table[skip first n=1, x=n, y index=11] {\abstrace{stable_pgreedy}};
\addplot[clc, forget plot] table[skip first n=1, x=n, y index=11] {\abstrace{classical}};
\addplot[gray, dashed, line width=0.6pt, domain=16:5120, samples=2] {9.82e-6*x^2};
\addlegendentry{$n^2$}
\addplot[gray, dotted, domain=16:52791, samples=2] {4.13e-5*x};
\addlegendentry{$n$}
\end{loglogaxis}
\end{tikzpicture}%
\begin{tikzpicture}
\begin{loglogaxis}[greedycmp,
  xlabel={offline time [s]}, ylabel={$\varepsilon_\infty$},
  ymin=1e-5, ymax=2e0,
  width=0.42\textwidth, height=0.35\textwidth,
  legend pos=outer north east,
  legend style={font=\footnotesize, /tikz/every even column/.append style={column sep=0.4em}}]
\addplot[black] table[skip first n=1, x index=11, y=linf] {\abstrace{fgreedy_1}}; \addlegendentry{$f$-greedy}
\addplot[msc] table[skip first n=1, x index=11, y=linf] {\abstrace{ms_fgreedy}}; \addlegendentry{multiscale $f$-greedy}
\addplot[stc] table[skip first n=1, x index=11, y=linf] {\abstrace{stable_fgreedy}}; \addlegendentry{CC $f$-greedy}
\addplot[black, dashed] table[skip first n=1, x index=11, y=linf] {\abstrace{pgreedy_1}}; \addlegendentry{$P$-greedy}
\addplot[msc, dashed] table[skip first n=1, x index=11, y=linf] {\abstrace{ms_pgreedy}}; \addlegendentry{multiscale $P$-greedy}
\addplot[stc, dashed] table[skip first n=1, x index=11, y=linf] {\abstrace{stable_pgreedy}}; \addlegendentry{CC $P$-greedy}
\addplot[clc] table[skip first n=1, x index=11, y=linf] {\abstrace{classical}}; \addlegendentry{classical, Wendland}
\end{loglogaxis}
\end{tikzpicture}
\caption{Approximation of \eqref{eq:abs_function}: offline time against the number of centres (left) and $\varepsilon_\infty$ against the offline time (right) for all the methods, with $N = 100\,000$ candidates. The $P$-greedy rule is dashed.}
\label{fig:abs_comparison_cost}
\end{figure}

\section{Conclusion and outlook}
\label{sec:conclusion}

We introduced greedy multiscale kernel algorithms, in which every level of a multiscale residual correction is built by a greedy kernel algorithm.
For the multiscale $f$-greedy algorithm the convergence rate in the number of centres is that of the single-scale algorithm, and the cost of the selection is the same.
What the multiscale structure changes is the stability.
The condition number of a kernel matrix is bounded from below by a power of $\chi = \delta n^{1/d}$, for any position of the centres, 
and the $f$-greedy error bound is a negative power of the same $\chi$: at a fixed scale, accuracy has to be paid for with conditioning.
The multiscale scaling keeps $\chi$ fixed on every level, so that each level provides only a bounded reduction, but at a condition number that no longer has to grow with the number of centres.
\Cref{th:stability_convergence} states this for any construction of the levels, with the reduction of a level deciding the convergence and the separation of its centres deciding the stability.
The numerical experiments confirm the picture, in error, time and conditioning.

We conclude by mentioning two interesting directions for future research.
First, our analysis of the multiscale $f$-greedy algorithm assumes that $f$ lies in the native space.
Extending it to targets outside the native space requires a corresponding result for the single-scale $f$-greedy algorithm, which is still open, and the experiment with the kink in \Cref{subsec:comparison} suggests that the algorithms perform well in this regime.
Second, evaluating the selection criterion at all candidates on every level costs $\mathcal{O}(N M_n^2)$.
Restricting the candidates to the cells of an adaptive samplet tree \cite{avesani2025multiscale} would reduce this cost to linear in $N$, 
up to logarithmic factors, which we consider a promising direction for future work.

\textbf{Acknowledgements:}
The work was initiated during a research stay of Tizian Wenzel at USI Lugano. Sara Avesani and Michael Multerer gratefully acknowledge support from the Swiss National Science Foundation (SNSF) through the
SNSF starting grant ``Multiresolution methods for unstructured data'' (TMSGI2\_211684).

\bibliography{references}
\bibliographystyle{abbrv}

\end{document}